\documentclass[11pt,fleqn]{article}
\usepackage[margin=1in]{geometry}               
\usepackage{graphicx}
\usepackage{amssymb}
\usepackage{amsmath}
\usepackage{xfrac}
\usepackage{epstopdf}
\usepackage{amsthm}
\usepackage{threeparttable}
\usepackage{booktabs}
\usepackage{setspace}
\usepackage[ruled,vlined]{algorithm2e}
\usepackage[usenames,dvipsnames]{xcolor}
\usepackage{MnSymbol,wasysym}
\usepackage{authblk}
\usepackage{caption}
\usepackage{natbib}

\usepackage{xcolor}
\usepackage[verbose]{hyperref}
\usepackage{tikz}
\usepackage{pgfplots}
\usetikzlibrary{arrows,snakes,backgrounds,automata,positioning}
\usetikzlibrary{shapes,decorations}
\usepackage{tikz-cd}
\usepackage[skins,theorems,most]{tcolorbox}
\usepackage{float}
\usepackage{bm}
\tcbset{highlight math style={enhanced,colframe=red,colback=white,arc=0pt,boxrule=1pt}}
\hypersetup{colorlinks=false,allbordercolors=blue,pdfborderstyle={/S/U/W 1}}

\pgfplotsset{compat=1.18} 

\newcommand{\dss}{\displaystyle}

\newtheorem{proposition}{Proposition}

\newtheorem*{remark}{Remark}

\begin{document}
\title{Effects of Heterogeneity in Two-Cell Feedforward Networks}
\author[1]{Abdullah Ahmed\thanks{labdullah@umd.edu}}
\author[1]{Maria Cameron\thanks{mariakc@umd.edu}}
\author[2]{Antonio Palacios\thanks{apalacios@sdsu.edu}}
\author[1]{Hengyuan Qi\thanks{hqi@umd.edu}}
\author[2]{Samir Sahoo\thanks{samirkumsahoo@gmail.com}}
\affil[1]{Department of Mathematics, University of Maryland, College Park, MD 20742, USA}
\affil[2]{Nonlinear Dynamical Systems Group, Department of Mathematics, San Diego State University, San Diego, CA 92182-7720, USA}

\maketitle

\abstract{
As the need for higher performance from biological and electronic sensors continues to outpace current technologies, new strategies for designing, developing, and implementing novel sensor systems are emerging. 
A recently introduced feedforward network-based approach can simultaneously enhance a signal while steering a radiating beam in radio frequency communication systems. Furthermore, the approach is also model-independent, thus making it suitable for other applications. In this work, we aim to understand the effects of inhomogeneities in feedforward arrays, which are inevitable in real-world implementations. We investigate a collection of two-cell feedforward networks composed of pitch-fork cells and Stuart-Landau oscillators and quantify the effects of parameter inhomogeneities using system reduction, analytical and computational bifurcation analyses, and a singularity theory approach. Contrary to common intuition, inhomogeneity in the excitation parameter can be exploited to enhance the network output growth rate. While frequency inhomogeneity in Stuart-Landau networks primarily has an adverse effect on signal amplification, phase locking persists over a surprisingly broad range of inhomogeneity.}

{\bf Keywords:} feedforward networks; signal amplification; Stuart-Landau oscillators; inhomogeneity; disorder; pitch-fork bifurcation; Hopf bifurcation; torus bifurcation.

\section{Introduction}
\subsection{An overview}
Signal amplification is a quintessential paradigm in science and engineering, in which an input signal is enhanced to facilitate detection, classification, and subsequent processing. This process allows for the development of highly sensitive sensors. To achieve this highly sought-after effect, various strategies, which depend primarily on the type of signal to be amplified, have been developed across many disciplines.
In bioelectrochemistry, fuel cells and electrolyzers, biosensors' response to detect trace numbers of analytes can be enhanced, while reducing noise signals, through nanomaterials, enzyme catalysis, and biological reactions~\cite{LIU20185}.
In mass cytometry, thermal-cycling-based DNA enables signal amplification, resulting in higher quantitative measurements of proteins or protein modifications at single-cell resolution~\cite{Cytometry}.
In electronic systems, traditional methods for signal amplification are achieved through transistors. In the lower-frequency stages, for example, many amplifying transistors are integrated on a single substrate, enabling high gain with very high stability and requiring few external components~\cite{BOLE201429}.
A novel approach to electric- and magnetic-field sensors exploits the symmetry of networks and infinite-period bifurcations that generate heteroclinic cycles to achieve sensitivities on the order of pico-teslas and femto-amps, respectively~\cite{IBPLKN,LON,IKBNB,esensor2,PVIL,Patent2}.
In phototransduction, signal amplification, which involves activation of a relatively small number of G protein-coupled receptors, is achieved through a cascade~\cite {AVB}.
\medskip

Beam steering is another fundamental problem, mainly in engineering, in which the goal is to control the direction of a radiating far-field intensity pattern~\cite{TedHeath,YOIT,POMAYO}. Conventional and modern methods involve manipulating the phase shift between oscillating components, which consists of arrays of nonlinear oscillators. This is usually accomplished by leveraging the inherent nonlinearities of individual components and the collective dynamics of the oscillator array. In optics, beam steering can be done by either changing the refractive index of the medium through which the beam is transmitted, or by the use of mirrors, prisms, or rotating diffraction gratings~\cite{CHENG2021106700}. 
In acoustics, beam steering is about changing the direction of audio from speakers, and it can be accomplished by changing the magnitude and phase of speakers arranged in
an array~\cite{WMH}. Other applications can be found in aerospace communication~\cite{Kacker:25}, in light detection and ranging (LiDARs)~\cite{2006SPIE}, in laser scanning microscopy~\cite{Lechleiter:2002aa}, in imaging of organs~\cite{OTFHI},
and in antenna and radar systems~\cite{HWY,YOIT,YORK}. 
\medskip

Over the past few years, we have been crafting novel strategies to simultaneously address both problems, signal amplification and beam steering. The fundamental principle is the use of feedforward networks as the underlying strategy. Feedforward networks are a specific type of network characterized by a {\em homogeneous} chain of unidirectionally coupled nodes~\cite{TYLER,Levasseur_Palacios_2021}. The first node or cell may be self-coupled. In both cases, the unidirectional coupling prevents feedback in the system, so that each cell may influence another without being itself affected. This coupling configuration departs from traditional methods, which typically employ bidirectionally coupled nonlinear oscillators. It has been shown that under the right conditions, the feedforward network causes certain bifurcations to exhibit accelerated growth rates~\cite{GENH,CURIOUS,GP2012}. These bifurcations are the source of achieving signal amplification. In follow-up work, we considered the feedforward network as a replacement for the commonly used bidirectionally coupled arrays of nonlinear oscillators. 
Thus, we first studied the {\em transmission problem}: generating and steering the 
radiating patterns that emanate from the sources~\cite{Levasseur2022beam1}. A fundamental result of that work was to show the existence of stable phase locking~\cite{PRK} in the collective dynamics. This result is deemed fundamental because the phase-locking angle can be used to steer the beam, while the feedforward dynamics amplify the signal. In subsequent work, we addressed the {\em reception problem}: understanding the interaction of the feedforward array of nonlinear oscillators with external signals. Studying the reception mode in a feedforward network is more complicated because incident signals introduce time-dependent forcing terms; thus, the model is non-autonomous, whereas in the transmission case it is autonomous. The main contribution from this latter work was to show the regions of parameter space where stable, 1:1 synchronization with the external signals exists~\cite {Levasseur2025beam2}. It was also shown that the interaction of the collective pattern of oscillation produced by the feedforward network can lead to narrower main lobes and lower sidelobes than in other commonly used array configurations. It is desirable to have a beamformer system with a narrow mainlobe and low sidelobes, as these characteristics can improve resolution and reduce susceptibility to interference from strong signals.
\medskip

\subsection{A summary of main results}
The next phase of work is the actual development and implementation of the technology. But to do that, we must first account for the fact that, in practice, even the most advanced systems are imperfect. That is, they do not necessarily conform to the underlying assumption of homogeneity. In other words, real-life implementation requires us to consider the effects of inhomogeneities or disorder in a system. In the case of beam steering and signal amplification, inhomogeneities may arise naturally through fluctuations in the parameters of each cell, e.g., excitation and frequency parameters, or through variations in the coupling strengths of the cells. 

In this work, we abandon the assumption that all nodes are identical and examine the effects of fluctuations on system parameters in a feedforward network. We focus on two-cell arrays of pitchfork and Stuart-Landau cells. 
Adding inhomogeneities into the model equations introduces unfolding parameters that automatically increase the codimension of the bifurcations.  Naturally, higher codimension yields more complex dynamics, which we investigate in this work. We use model reduction and singularity theory to conduct a bifurcation analysis, plot phase and bifurcation diagrams, and obtain a complete quantification of the effects of parameter inhomogeneities.

We show that, contrary to intuition, inhomogeneity in the excitation parameter $\mu$ in two-cell pitchfork and Stuart-Landau feedforward networks can result in response amplification beyond that achievable with a homogeneous network.

In addition, feedforward arrays of two Stuart-Landau cells admit inhomogeneities in frequency $\omega$ and in the cubic nonlinearity parameter $\gamma$.  We demonstrate that signal amplification is neutral with respect to the inhomogeneity in $\gamma$. By contrast, frequency inhomogeneity has an adverse effect on signal amplification. However,  phase locking persists for a surprisingly broad range of frequency gaps. Beyond this range, the system settles on an invariant torus attractor, i.e., exhibits quasiperiodic oscillations. 

The results of this work can serve as guiding principles for engineers designing and operating emerging technologies for sensors, beam steering, and signal amplification systems. They also pave the way for the study of large feed-forward arrays with inhomogeneities, planned for future work. 
\medskip

The manuscript is organized as follows.
In Section~\ref{sec:Background}, a review of the effect of signal amplification and beam steering through feedforward networks is introduced. 
Section~\ref{sec:system1} is devoted to feed-forward arrays of pitch-fork cells. The effects of inhomogeneities in the excitation parameter are studied, with an emphasis on the network's equilibrium states. 
Section~\ref{sec:Hopf} presents a systematic study of the effects of inhomogeneity in frequency and the excitation and cubic nonlinearity parameters in two-cell feed-forward arrays of Stuart-Landau oscillators.
The results are discussed in Section \ref{sec:discussion}. 
\medskip

\section{Background} \label{sec:Background}
In this section, two fundamental applications, which serve as a motivation for the analysis of feedforward networks, {\em signal amplification} and {\em beam steering}, are reviewed for completeness purposes. 

\subsection{Signal amplification via feedforward networks}

Feedforward networks are a specific type of network characterized by a homogeneous chain of unidirectionally coupled nodes~\cite{LEPA,TYLER}.
The first node may be self-coupled. Under the right conditions, the 
feedforward network causes certain bifurcations to exhibit accelerated growth rates. 
\medskip

Consider, for example, the three-cell feedforward network shown in Fig.~\ref{fig:bckgd_feed_Hopf}
The internal dynamics of each cell are governed by a Hopf bifurcation:
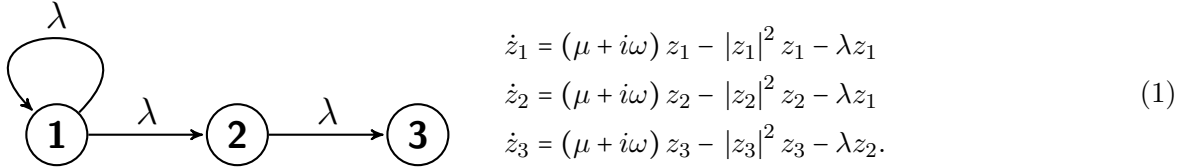
\begin{figure}[H]
\begin{minipage}{.4\textwidth}
  \centering
\begin{tikzpicture}[->,>=stealth',shorten >=1pt,auto,node distance=2.4cm,
  thick,main node/.style={circle,draw,font=\sffamily\Large\bfseries}]

  \node[main node] (1) {1};
  \node[main node] (2) [right of =1] {2};
  \node[main node] (3) [right of =2] {3};

  \path[every node/.style={font=\sffamily\Large}]
   (1) edge [loop] node [above] {$\lambda$} (1)
   (1) edge node {$\lambda$} (2)
   (2) edge node {$\lambda$} (3);
   
\end{tikzpicture}
\end{minipage}%
\begin{minipage}{.6\textwidth}
 \centering
\begin{equation} \label{eq:ffwd_v1}
\begin{aligned}
\dot z_1 &= \left(\mu +i\omega\right)z_1-\,\left|z_1\right|^2z_1 - \lambda z_1 \\
\dot z_2 &= \left(\mu +i\omega\right)z_2-\,\left|z_2\right|^2z_2 - \lambda z_1 \\
\dot z_3 &= \left(\mu +i\omega\right)z_3-\,\left|z_3\right|^2z_3 - \lambda z_2 .
\end{aligned}
\end{equation}
\end{minipage}
\caption{Representative example of a three-cell feedforward network. Arrows indicate coupling, with coupling strength $\lambda$. Each cell represents a dynamical system assumed to be 
operating near a Hopf bifurcation.}
\label{fig:bckgd_feed_Hopf}
\end{figure}
The authors in~\cite{GENH,CURIOUS,GP2012} have found that the coupling causes the amplitudes of oscillations that arise from the onset of the Hopf bifurcation to grow at a larger rate. If $\mu$ is the bifurcation parameter, and $\mu = 0$ is the onset of a supercritical Hopf bifurcation, then the third cell undergoes oscillations of amplitude approximately equal to $\mu^{1/6}$, rather than the \emph{expected} amplitude of $\mu^{1/2}$. This phenomenon showcases an accelerated growth rate that has the potential for the design and fabrication of advanced filters in signal processing~\cite{MOTIF,MMG2007}). An example of a time series, obtained from simulations of 
Eq.~\eqref{eq:ffwd_v1}, which exhibits this growth phenomenon, is shown in Fig.~\ref{fig:ffwd_amp}(a). Without self-coupling on the first cell, see Fig.~\ref{fig:ffwd_amp}(b), the amplification effect is even larger.
\begin{figure}[htbp]
\centering
(a): \includegraphics[width=0.4\textwidth]{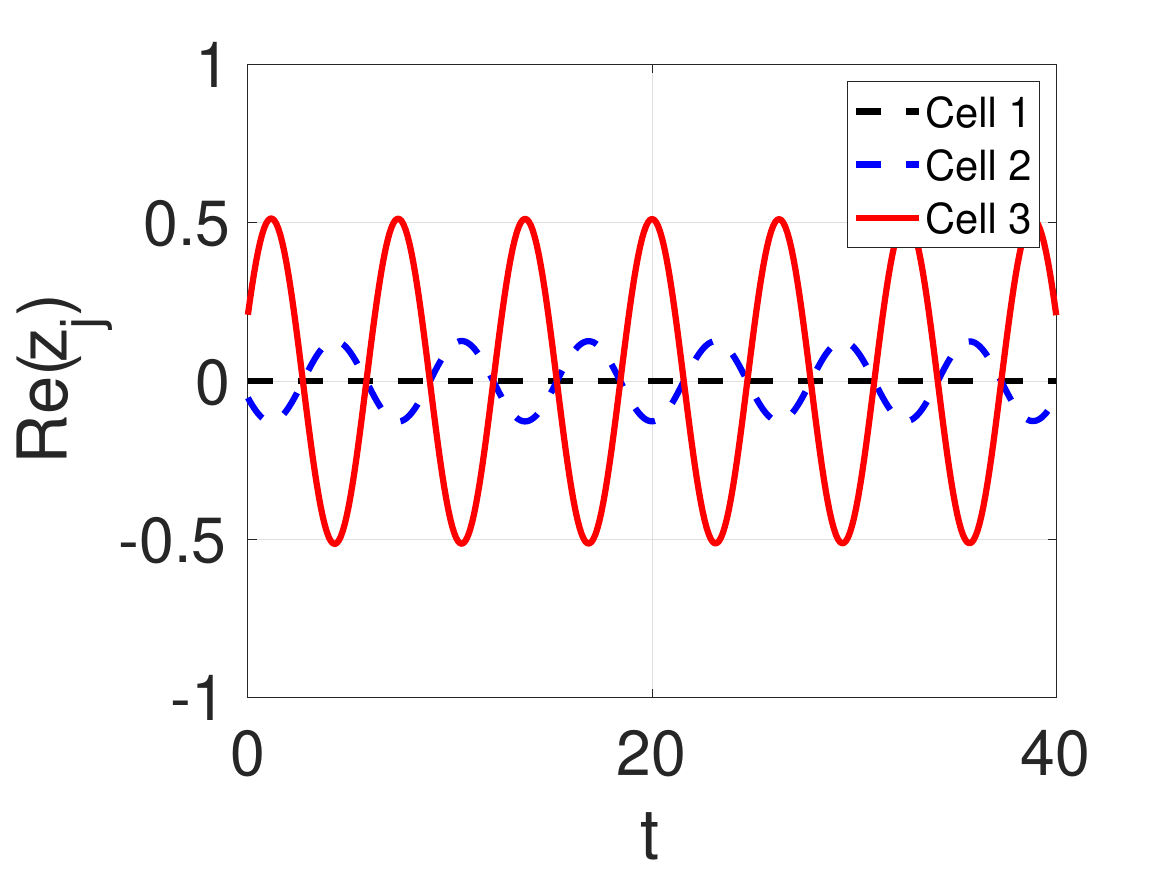}
(b): \includegraphics[width=0.4\textwidth]{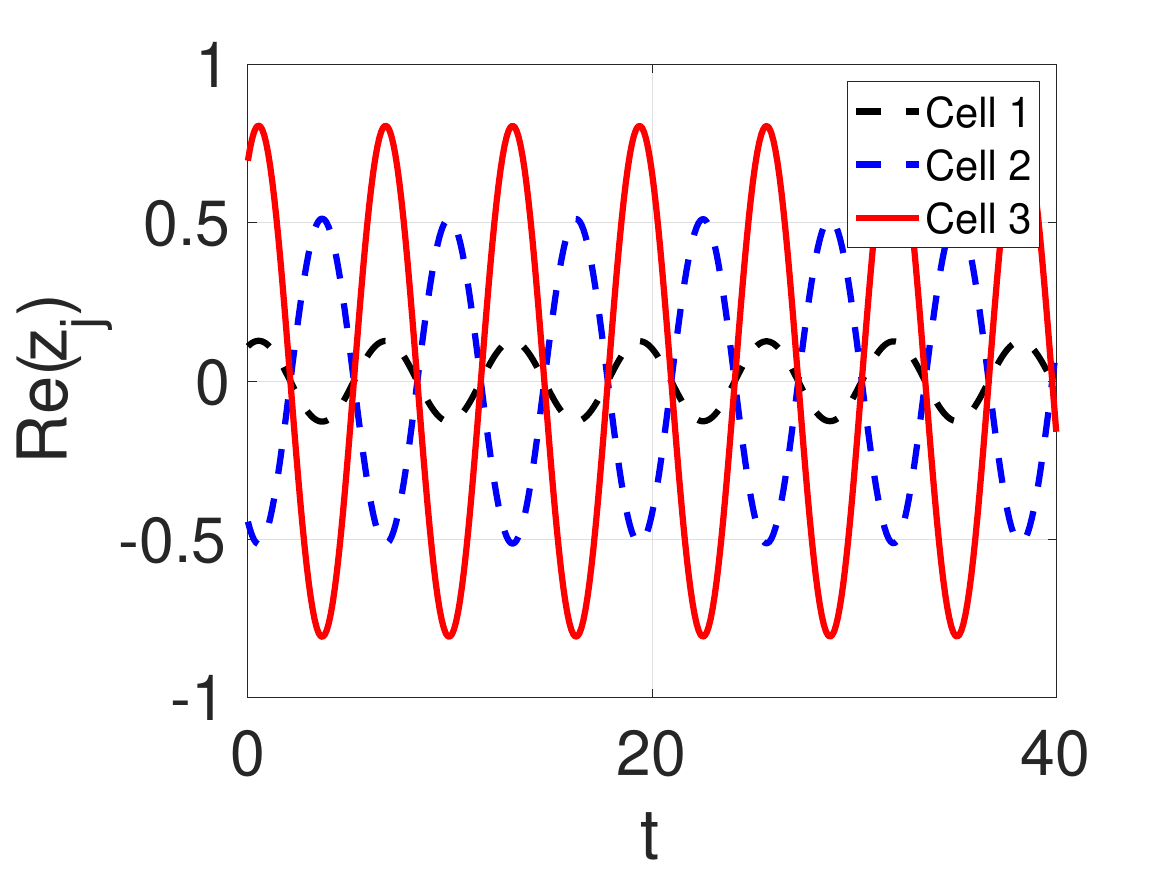}
\caption{Signal amplification in a feedforward network (a) with self-coupling of the first cell and (b) without self-coupling. Parameters are: $\mu = (0.5)^6$, $\omega = 1$,  $\lambda = 1$. (a): With self-coupling. (b): Without self-coupling.
}
\label{fig:ffwd_amp}
\end{figure}

As the feedforward network grows in size, the growth rate of oscillations in the final cell is determined by taking successive cube roots~\cite{MOTIF}. Thus, in a five-cell 
In a feedforward network, the growth rate should be proportional to the $54^{\text{th}}$ root of the bifurcation parameter, as shown in~\cite{RINK}. Achieving such large growth rates is exciting indeed, as it can lead to novel mechanisms for signal amplification
beyond the simple square-root growth rate. This, of course, means that large amplitude oscillations may arise very soon after the onset of the Hopf bifurcation.
\medskip

The phenomenon of such large-amplitude oscillations in the third cell can be understood as a type of nonlinear resonance as well as being the result of the combination of the unidirectional coupling and the higher-degree nonlinearities~\cite{LEPA}. Related articles proving that anomalous growth rates can occur for equilibria in (unusual) regular networks and for bifurcations at simple eigenvalues can be found in~\cite{SG1,SG2}.

\subsection{Beam Steering}
Beam steering involves strategies to manipulate and control the direction of a radiating far-field intensity pattern~\cite{TedHeath,YOIT,POMAYO}. 
In antennas and radar systems, beam steering can be achieved either by switching the antenna elements or by controlling the phase differences between oscillating components, which typically consist of arrays of nonlinear oscillators. Overall, modern methods for beam steering exploit the inherent nonlinearities of individual components and the collective dynamics of oscillator arrays to manipulate phase shifts. None of
those methods includes, however, signal amplification.
\medskip

Recently, the idea of employing a feedforward network, which enables simultaneous beam steering and signal amplification, was introduced in~\cite{Levasseur2022beam1, Levasseur2025beam2}. These works showed that the branches of oscillations in the feedforward network can, under certain conditions, exhibit stable phase-locking patterns, in which the phase difference between consecutive oscillators is constant. Those phase differences are critical to steering a beam towards a desired location. The beam steering can be accomplished as follows.
\medskip

The total radiation pattern of an antenna array at a point $P$ is given by the equation
\begin{equation} \label{eq:radiation_field}
  E(P) = A ( \Psi ) \, E_0 e^{i k r_0},
\end{equation} 
where $E_0 e^{i k r_0}$ is the electric field produced by a single patch element, $\Psi = k d \sin{\varphi}$, in which $k$ is a free-space wave vector, $d$ is the distance between consecutive array elements, and $\varphi$ is the angle of emission of the beam. The term $A ( \Psi )$ is an {\em Array Factor} defined by
\begin{equation} \label{eq:Array_Factor}
   A ( \Psi ) =  
   \dss \frac{\sin{ \left( \dss \frac{N \Psi}{2} \right) }}{N \sin{ \left( \dss \frac{\Psi}{2} \right) }} e^{i (N-1) \Psi / 2}.
\end{equation}
\medskip

The absolute value of the Array Factor, $|A(\Psi)|$, is symmetric with respect to $\Psi = 0$, and it always attains a maximum at $\Psi = 0$, which corresponds to an angle of emission of $\varphi = 0$. This angle is also known as the {\em broadside direction}, as it is normal to the plane of the array. A critical observation is that varying $\Psi$ changes the direction of the radiating beam. Thus, the problem of beam steering translates into finding strategies to vary $\Psi$. One possibility is to employ a feedforward network and tune it to a phase-locking regime. If $\theta$ is the constant phase-locking angle among consecutive oscillators, then it has been shown~\cite{Levasseur2022beam1, Levasseur2025beam2} that this angle enters into the array factor as
\begin{equation} \label{eq:Array_Factor_v2}
   A ( \Psi ) =  
   \dss \frac{\sin{ \left( \dss \frac{N (\Psi + \theta)}{2} \right) }}{N \sin{ \left( \dss \frac{\Psi + \theta}{2} \right) }} e^{i (N-1) (\Psi + \theta)/ 2}.
\end{equation}
\medskip

By varying the phase-locking angle, we can control the beam direction, thereby steering the beam pattern from angle $\Psi$ to $\Psi + \theta$.  Figure~\ref{fig:beam_steering} illustrates the strategy of beam steering through varying the phase-locking angle, $\theta$, in a feedforward array with 20 nonlinear oscillators.
\begin{figure}[htbp]
\centerline{
\includegraphics[width=0.9\textwidth]{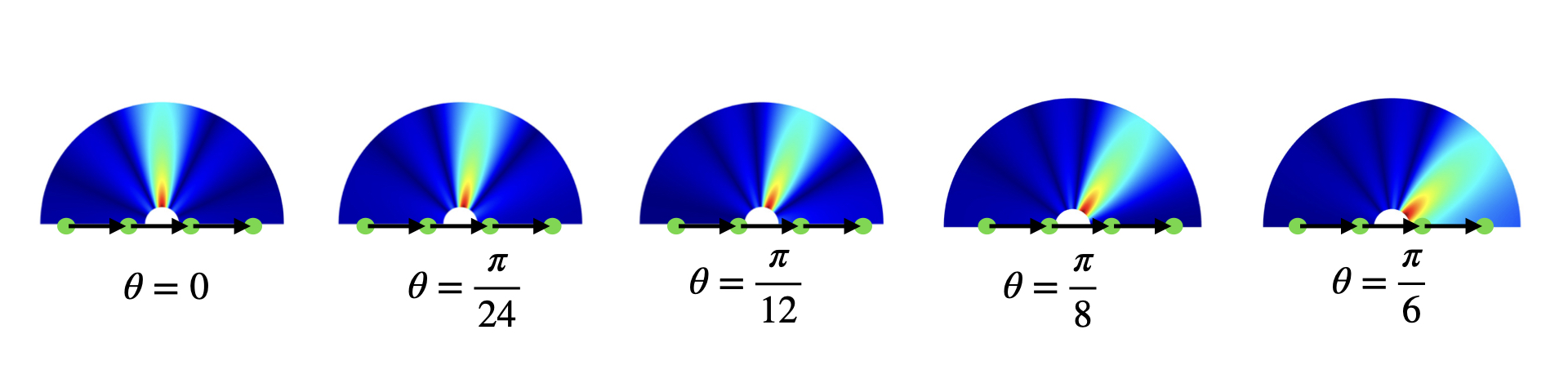}
}
\caption{Beam steering can be achieved in a feedforward network by varying the phase-locking angle $\theta$. The sequence of snapshots shows the radiating pattern with 20 nonlinear oscillators.}
\label{fig:beam_steering}
\vspace*{12pt}
\end{figure}
\medskip

Together, the larger growth rate of oscillation amplitudes and phase-locking characteristics enable innovative solutions and improvements (e.g., simultaneous signal amplification and beam steering) over conventional methods.
\medskip

\section{A two-cell feedforward network of cells with pitchfork bifurcations}
\label{sec:system1}

In this section, we study the effects of inhomogeneity in the excitation parameter of a feedforward network of cells, each of which admits a supercritical pitchfork bifurcation. We examine a two-cell system and conduct a thorough study of its equilibria via an asymptotic analysis of cubic roots, a numerical investigation, and a singularity analysis.

\subsection{A two-cell system: the setup}
We consider a simple two-cell system exhibiting steady-state bifurcations:
\begin{figure}[H]
\begin{minipage}{.4\textwidth}
  \centering
\begin{tikzpicture}[->,>=stealth',shorten >=1pt,auto,node distance=2.4cm,
  thick,node1/.style={rectangle,fill=blue!10,minimum width = 0.75cm,minimum height = 0.75cm,draw,font=\sffamily\Large\bfseries},node2/.style={rectangle,fill=red!10,minimum width = 0.75cm,minimum height = 0.75cm,draw,font=\sffamily\Large\bfseries}]

  \node[node1] (1) {1};
  \node[node2] (2) [right of =1] {2};

  \path[every node/.style={font=\sffamily\Large}]
   (1) edge node {$\lambda$} (2);
   
\end{tikzpicture}
\end{minipage}%
\begin{minipage}{.6\textwidth}
 \centering
\begin{equation} \label{eq:system1} 
\begin{aligned}
\dot{x} & = \mu x - x^3,\\
\dot{y} & = (\mu + \varepsilon) y - y^3  - \lambda x.
\end{aligned}
\end{equation}
\end{minipage}
\caption{A schematic of a two-cell feed-forward network with inhomogeneous cells. Each cell represents a system prone to a pitchfork bifurcation.}
\label{fig:bckgd_feed_Equil}
\end{figure}

The parameter $\mu$ represents the input signal: $\mu = 0$ corresponds to the absence of an input signal, while $\mu > 0$ means that there is a signal that we want to detect. The parameter $\varepsilon$ represents the inhomogeneity of the cells. It may be positive or negative, due to manufacturing imperfections or intentional design. 
The parameter $\lambda$ is the coupling strength. 
\medskip

Varying the parameter $\mu$ unfolds an ensemble of qualitatively different bifurcation diagrams depending on the relationship between $\mu$ and $\varepsilon$, demonstrating an unexpectedly complex set of stable and unstable equilibria -- see Fig.~\ref{fig:sys1_phase_bifur}. 

\subsection{Structure and stability of equilibria}
\label{sec:system1_equilibria}
We start analyzing system~\eqref{eq:system1} by finding its equilibria and establishing their stability at various combinations of $\mu$ and $\varepsilon$. The first cell, whose state variable is $x$, is a closed system. At $\mu < 0$, its only equilibrium  is $x=0$, while at $\mu > 0$ it has three equilibria: $0$ and $\pm\sqrt{\mu}$.   Throughout this and the next sections, we assume that $\lambda >0$, because $\lambda < 0$ is equivalent to switching the sign of $x$.

The second cell, with state variable $y$, is influenced by the first cell. If $x =0$, its equilibria are the roots of the cubic polynomial
\begin{equation}
    \label{eq:p0}
    p_0(y): = (\mu+\varepsilon)y - y^3.
\end{equation}
If $\mu+\varepsilon < 0$, $p_0(y)$ has a unique root at $y = 0$. If $\mu + \varepsilon > 0$, it has three roots, $0$ and $\pm\sqrt{\mu+\varepsilon}$. 

If $\mu > 0$ and the first cell is at a nonzero equilibrium, i.e., $x = \pm\sqrt{\mu}$, the equilibria of the second cell are the roots of the cubic polynomials
\begin{equation}
    \label{eq:p1cubic_main}
    p_{+}(y): = (\mu + \varepsilon) y - y^3  - \lambda \sqrt{\mu}\quad {\rm and}\quad p_{-}(y): = (\mu + \varepsilon) y - y^3  + \lambda \sqrt{\mu},
\end{equation}
respectively. The critical relationship between $\mu$, $\varepsilon$, and $\lambda$, at which the polynomials $p_{\pm}(y)$ have exactly two roots is (see Eq. \eqref{eq:sys1:critical}):
\begin{equation}
    \label{eq:poly2roots}
    2(\mu+\varepsilon)^{3/2} = 3
    \sqrt{3}\lambda\mu^{1/2}.
\end{equation}
Dividing Eq. \eqref{eq:poly2roots} by $\lambda^{3/2}$ and introducing $\tilde{\mu}:=\mu/\lambda$ and $\tilde{\varepsilon}$, we obtain an equation where $\lambda$ is eliminated: $3(\tilde{\mu}+\tilde{\varepsilon})^{3/2} = 3\sqrt{3}\tilde{\mu}^{1/2}$. This means that the parameter $\lambda$ in Eq. \eqref{eq:poly2roots} merely scales $\mu$ and $\varepsilon$. In other words, we can measure $\mu$ and $\varepsilon$ in $\lambda$-units. The curve defined by Eq. \eqref{eq:poly2roots} separates the regions where $p_{\pm}(y)$ have one and three solutions -- see Fig. \ref{fig:sys1_phase_bifur}.
\begin{figure}[htbp]
\centerline{
\includegraphics[width=0.75\textwidth]{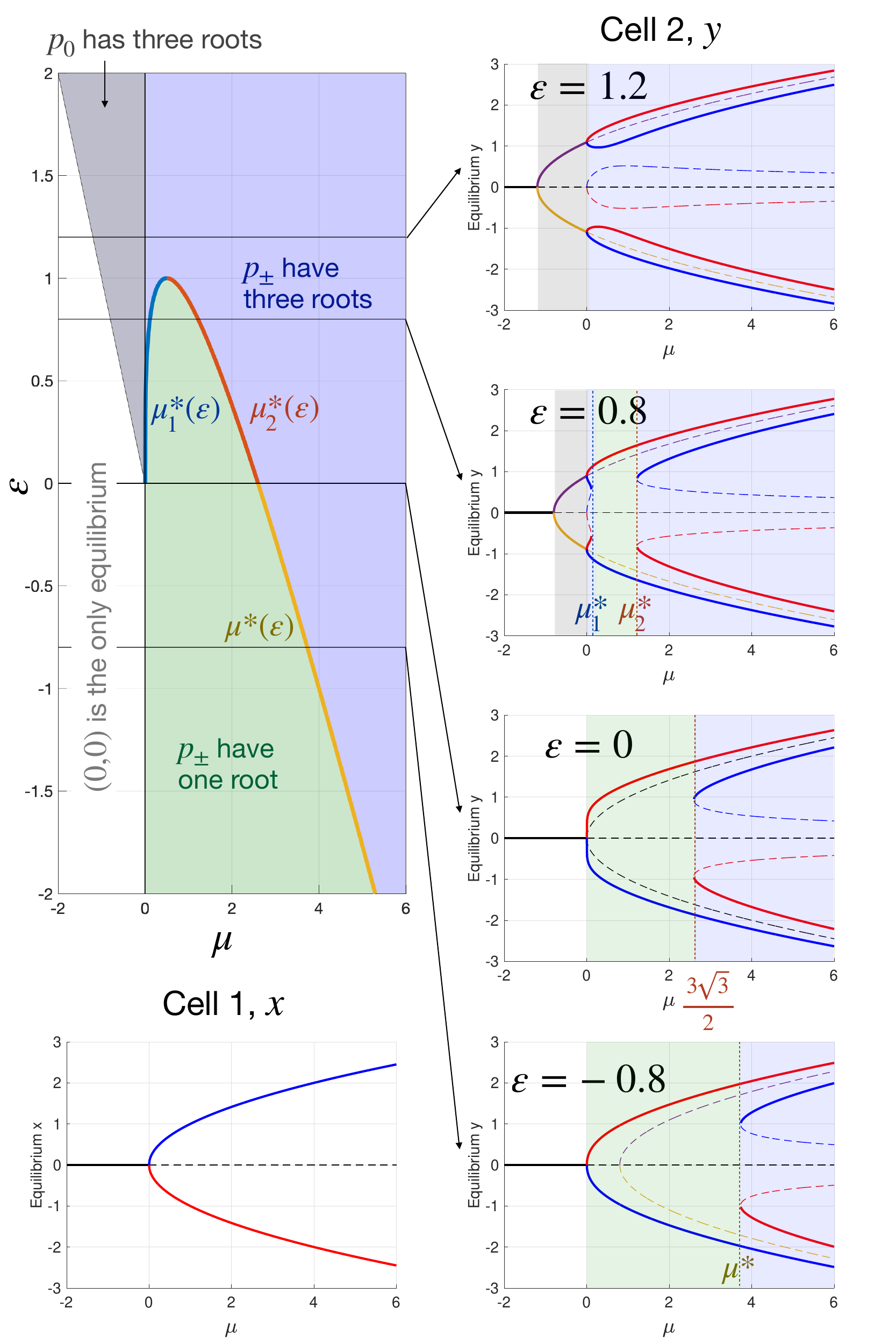}
}
\caption{A phase diagram and representative bifurcation diagrams for system~\eqref{eq:system1}. Solid and dashed lines represent stable and unstable branches, respectively. The branches of $y$ existing at $x = \sqrt{\mu}$ and $x = -\sqrt{\mu}$ are blue and red, respectively. Black, yellow, and purple branches of $y$ correspond to $x = 0$. The curve separating the one- and three-root regions, plotted in blue, red, and yellow, is defined by Eq. \eqref{eq:poly2roots}.}
\label{fig:sys1_phase_bifur}
\vspace*{12pt}
\end{figure}

The stability of the equilibria of Eq.~\eqref{eq:system1} is determined by its Jacobian
\begin{align}
\label{eq:system1_J}
    J=\begin{bmatrix}
        \mu-3x^2 &0\\ -\lambda & \mu+\varepsilon-3y^2
    \end{bmatrix}
\end{align}
evaluated at these equilibria.
The Jacobian is lower-triangular due to the feedforward structure of the network; therefore, its eigenvalues are its diagonal entries, $\mu-3x^2$ and $\mu + \varepsilon - 3y^2$.
We consider four cases with a distinct structure of the equilibria. The summarizing phase diagram in the $(\mu,\varepsilon)$-plane and representative bifurcation diagrams are displayed in Fig. \ref{fig:sys1_phase_bifur}. A detailed description of the structure and stability of the roots of $p_\pm(y)$ is given in Appendix~\ref{AppA}.

\subsection{What is the system's response to a jump in the excitation parameter?}
\label{sec:system1_jump}
In Section \ref{sec:system1_equilibria}, we found the equilibria of system~\eqref{eq:system1} and determined their stability. The next question, motivated by the sensor design, concerns how the system will respond to a sudden jump in the excitation parameter $\mu$ from zero to a small positive value. 

To answer this question, we study the basin structure of system~\eqref{eq:system1}. At positive $\mu$, the system may have two or four attractors (sinks) of stable node type, depending on the values of $\mu$ and $\varepsilon$ -- see Fig. \ref{fig:sys1_basins}(a). As found in Section \ref{sec:system1_equilibria}, four sinks exist if $0< \varepsilon < \lambda$ and $0<\mu<\mu_1^{*}(\varepsilon)$, where $\mu_1^*(\varepsilon)$ is the smallest positive root of the equation $ 2 (\mu+\varepsilon)^{3/2} = 3\sqrt{3}\lambda\mu^{1/2}$, which is well-approximated by $\mu_1^*(\varepsilon)\approx\tfrac{4\varepsilon^3}{27\lambda^2}$ at $\varepsilon< 0.5\lambda$ (see Eq.~\eqref{eq:sys1:critical}) and Fig.~\ref{fig:sys1_cubic}(b)). Otherwise, system~\eqref{eq:system1} has two sinks.

\begin{figure}[htbp]
\centering
(a)\includegraphics[width=0.8\textwidth]{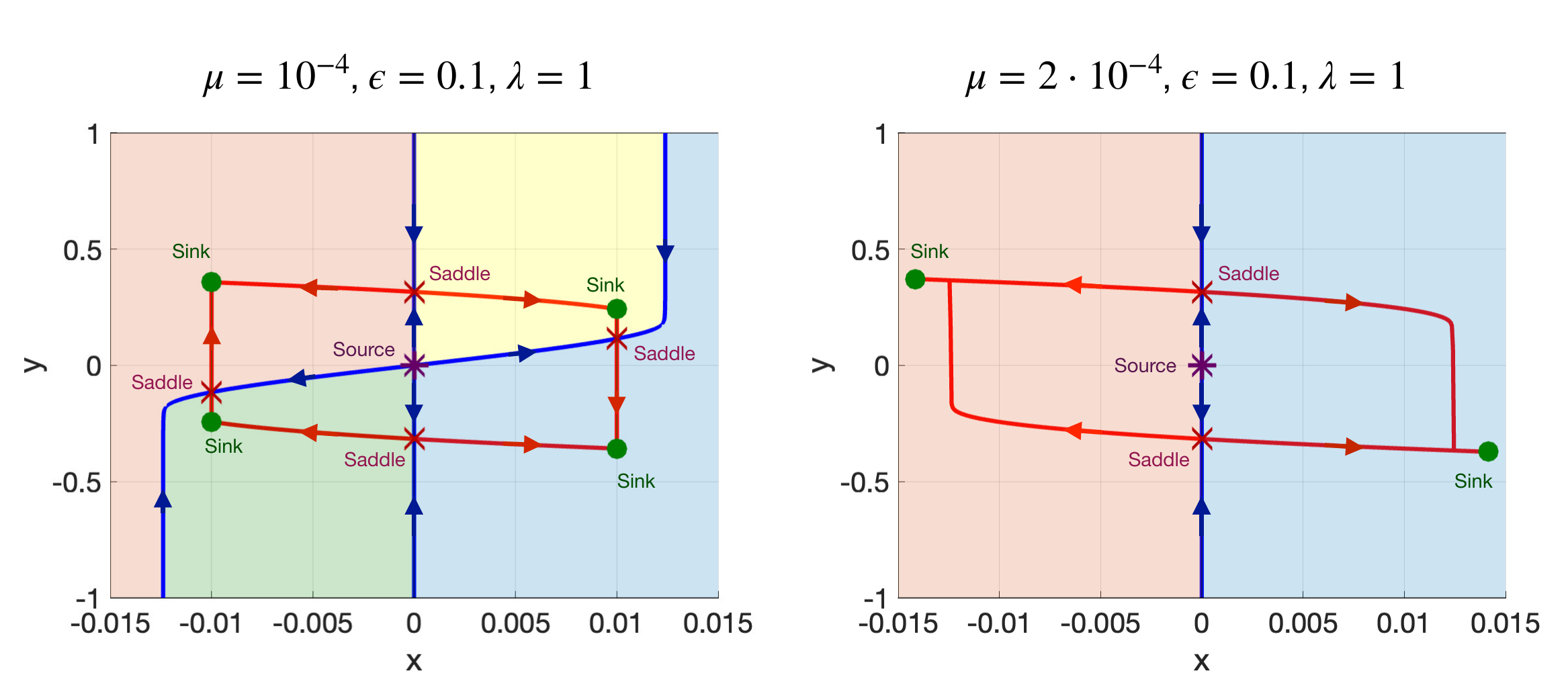}\\
(b)\includegraphics[width=0.8\textwidth]{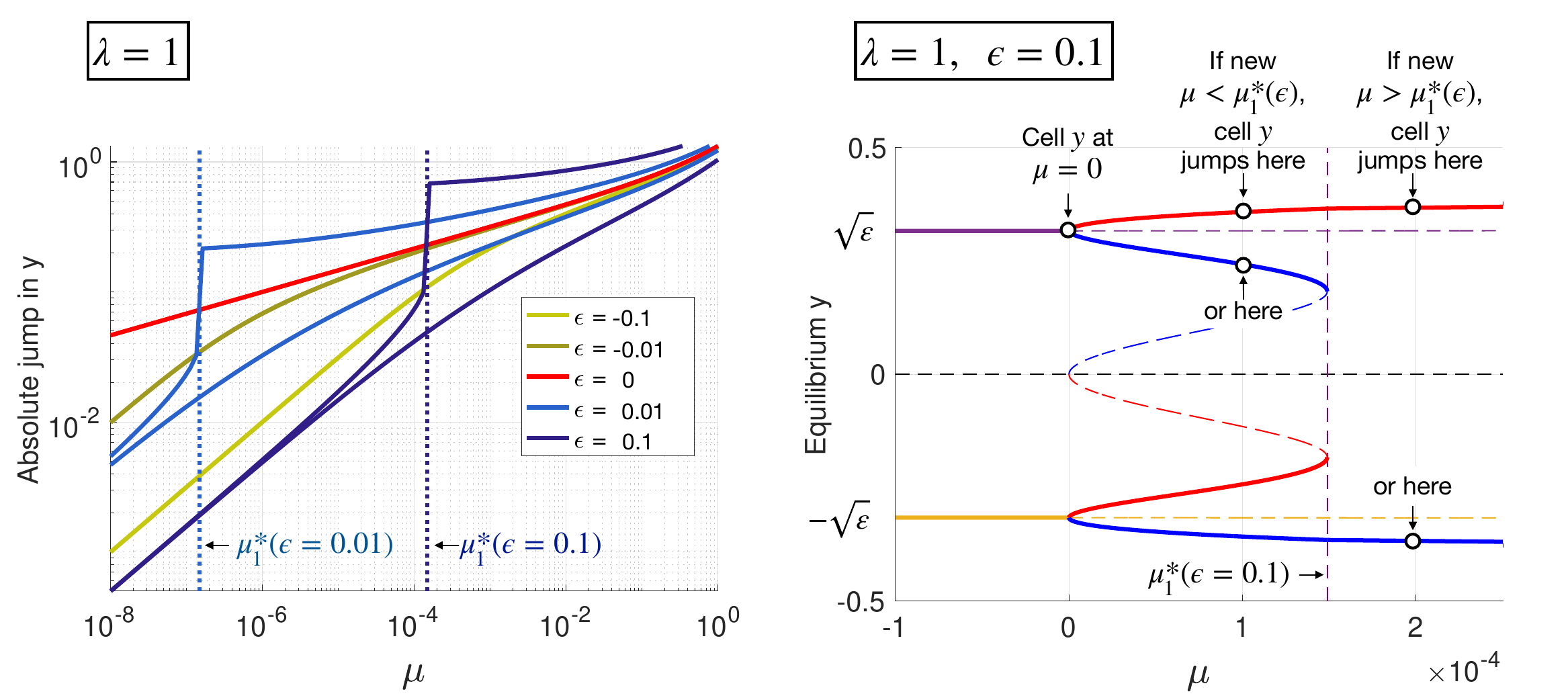}
\caption{(a): Two possible topologically different basin and heteroclinic structures of system~\eqref{eq:system1}. For $0<\varepsilon<\lambda$, as $\mu$ increases from 0, the sinks with yellow and green basins collide with the nearest saddles and disappear. Then these pairs of sink and saddle reappear as $\mu$ keeps increasing. (b, left): The absolute value of the jump in $y$ versus $\mu$ in log-log scale occurring due to a sudden change in $\mu$ from zero to a positive value. If $\varepsilon > 0$, two absolute values of the jump in $y$ are possible at each $\mu$-jump, depending on whether the initial $y$ is $\sqrt{\varepsilon}$ or $-\sqrt{\varepsilon}$ and whether $x$ jumps to $\sqrt{\mu}$ or $-\sqrt{\mu}$. The dotted vertical line corresponds to the critical values of $\mu$ at which sinks and saddles collide as described in (a), i.e., a saddle-node bifurcation takes place. (b, right): A zoom-in of the bifurcation diagram at $\varepsilon = 0.1$. Stable equilibrium branches for $y$ are shown with solid lines, while unstable ones with dashed lines. The vertical dashed line is at the critical value of $\mu$ at which the saddle-node bifurcation takes place.
}
\label{fig:sys1_basins}
\end{figure}

If $\mu = 0$, the system resides at $(x=0,y=0)$ if $\varepsilon\le 0$, and at $(x=0,y=\sqrt{\varepsilon})$ or $(x=0,y=-\sqrt{\varepsilon})$, if $\varepsilon > 0$. These initial conditions lie on the basin boundary at any basin structure for $\mu > 0$, making it uncertain where the system switches due to a sudden jump in $\mu$ from zero to a positive value. As $\mu$ suddenly jumps to a positive value, $x$ switches to $x=\sqrt{\mu}$ or $x = -\sqrt{\mu}$. To understand where $y$  jumps, we consider four cases.
\begin{itemize}
    \item {\bf Case $\boldsymbol{\varepsilon}\boldsymbol{ <}\boldsymbol{ 0}$.} In this case, the initial $y$ is zero, and system~\eqref{eq:system1} has two symmetric sinks. The qualitative behavior of solution branches for $y$ is shown in the bifurcation diagram in the bottom right in Fig. \ref{fig:sys1_phase_bifur}. Thus, $y$ will jump from zero to one of two sinks, and the magnitude of the jump will be the same whether $x$ jumps to $\sqrt{\mu}$ or $-\sqrt{\mu}$. The magnitudes of the jump in $y$ versus $\mu$  at $\varepsilon = -0.1$ and $\varepsilon = -0.01$ are plotted, respectively, in olive and yellow in Fig.~\ref{fig:sys1_basins}(b). Comparing these plots with the red plot of the jump in $y$ at $\varepsilon = 0$, we conclude that negative $\varepsilon$ may only weaken the jump in $y$ compared to the one at $\varepsilon = 0$. Hence, it is not beneficial for signal amplification to reduce the excitation coefficient in the second cell. However, if $\varepsilon$ happens to be slightly negative due to a manufacturing imprecision, the amplification of the response in $y$ will be almost as strong as it is at $\varepsilon = 0$. 

    \item {\bf Case $\boldsymbol{0}\boldsymbol{<}\boldsymbol{\varepsilon}\boldsymbol{ <}\boldsymbol{\lambda}$, $\boldsymbol0\boldsymbol{<}\boldsymbol{\mu}\boldsymbol{<}\boldsymbol{\mu}^{\boldsymbol{*}}\boldsymbol{(}\boldsymbol{\varepsilon}\boldsymbol{)}$.} Suppose that $y = \sqrt{\varepsilon}$ before the jump in $\mu$. If the new value of $\mu$ satisfies $0<\mu<\mu_1^{*}(\varepsilon)$, the system acquires four sinks as shown in Fig.~\ref{fig:sys1_basins}(a, left). Depending on whether $x$ switches to $\sqrt{\mu}$ or $-\sqrt{\mu}$,  $y$ will jump, respectively, to the positive blue branch or the positive red branch as shown in Fig. \ref{fig:sys1_basins}(b, right). In this case, the response in $y$ is weaker than at $\varepsilon = 0$ -- see the light and dark blue plots in Fig. \ref{fig:sys1_basins}(b, left) at $\mu <\mu_1^*(\varepsilon)$. If the initial $y = -\sqrt{\varepsilon}$, the situation is similar.

  \item {\bf Case $\boldsymbol{0}\boldsymbol{<}\boldsymbol{\varepsilon}\boldsymbol{ <}\boldsymbol{\lambda}$, $\boldsymbol{\mu}\boldsymbol{>}\boldsymbol{\mu}^{\boldsymbol{*}}\boldsymbol{(}\boldsymbol{\varepsilon}\boldsymbol{)}$.} Suppose that $y = \sqrt{\varepsilon}$ before the jump in $\mu$. Since the system has only two sinks at $\mu > \mu_1^{*}(\varepsilon)$, it will jump at one of them depending on the sign of $x$. Contrary to the case where $\varepsilon < 0$, the magnitude of these two possible jumps will be different, as evident from Fig.~\ref{fig:sys1_basins}(b). It is impossible to predict which branch this jump will be to, the upper red or the lower blue in Fig.~\ref{fig:sys1_basins}(b, right), because its starting point is very close to the basin boundary. If the jump is to the lower blue branch, its magnitude will be approximately $2\sqrt{\varepsilon}$, a macroscopic quantity even at small $\mu$. However, the jump to the upper red branch will change the value of $y$ less than if $\varepsilon = 0$. If the initial $y = -\sqrt{\varepsilon}$, the situation is similar.

    \item {\bf Case $\boldsymbol{\varepsilon}\boldsymbol{ >}\boldsymbol{\lambda}$.} In this case, the system has two sinks at any positive $\mu$. The bifurcation diagram in Fig.~\ref{fig:sys1_phase_bifur} at the top right shows that we cannot expect a larger response in $y$ to a jump in $\mu$ than in the case $\varepsilon = 0$. 
\end{itemize}

In summary, the most interesting case from the viewpoint of sensor design is $0<\varepsilon<\lambda$, $\mu > \mu_1^*(\varepsilon)$, as a macroscopic jump in $y$ is possible. But how can one guarantee it will happen? We envision an upgrade of the system shown in Fig.~\ref{fig:bckgd_feed_Equil_upgrade}.
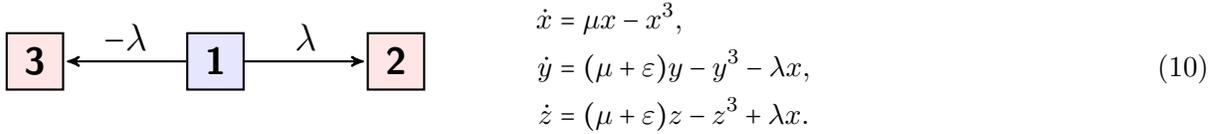
\begin{figure}[H]
\begin{minipage}{.4\textwidth}
  \centering
\begin{tikzpicture}[->,>=stealth',shorten >=1pt,auto,node distance=2.4cm,
  thick,node1/.style={rectangle,fill=blue!10,minimum width = 0.75cm,minimum height = 0.75cm,draw,font=\sffamily\Large\bfseries},node2/.style={rectangle,fill=red!10,minimum width = 0.75cm,minimum height = 0.75cm,draw,font=\sffamily\Large\bfseries}]

   \node[node2] (3) [left of =1] {3};
 \node[node1] (1) {1};
  \node[node2] (2) [right of =1] {2};

  \path[every node/.style={font=\sffamily\Large}]
   (1) edge node {$\lambda$} (2);
  \path[every node/.style={font=\sffamily\Large}]
   (1) edge node[above]{$-\lambda$} (3);
   
\end{tikzpicture}
\end{minipage}%
\begin{minipage}{.6\textwidth}
 \centering
\begin{equation} \label{eq:system1modified} 
\begin{aligned}
\dot{x} & = \mu x - x^3,\\
\dot{y} & = (\mu + \varepsilon) y - y^3  - \lambda x,\\
\dot{z} & = (\mu + \varepsilon) z - z^3  + \lambda x.
\end{aligned}
\end{equation}
\end{minipage}
\caption{A schematic of a three-cell network of inhomogeneous pitchfork cells. The proposed coupling can yield a dramatic amplification effect under certain settings. }
\label{fig:bckgd_feed_Equil_upgrade}
\end{figure}
In the modified system, cell one is coupled to two identical cells, two and three, with coupling coefficients of equal magnitudes but opposite signs. In this case, if $\mu$ jumps from zero to a positive value, the response in one of these cells will be significant. To ensure that $\mu > \mu_1^*(\varepsilon)$, we can prescribe the sensitivity threshold $\mu_0$, and choose $\varepsilon$  such that $\mu_1^*(\varepsilon) < \mu_0$, i.e., 
\begin{equation}
    \label{eq:epsilon_choice}
    \varepsilon < \frac{3\mu_0^{1/3}\lambda^{2/3}}{4^{1/3}}.
\end{equation}


\subsection{Singularity theory approach}
\label{sec:system1_singularity_equilibria}

In this section, we describe changes that occur in the number of equilibrium points and their stability properties from the standpoint of the {\em singularity theory}~\cite{GS1,GSS}. 
\medskip

Let $\mu > 0$. We start by assuming that the first cell in Eq.~\eqref{eq:system1} has already converged to a nonzero equilibrium, $x=\pm \sqrt{\mu}$. 
Since $x$ always converges to an equilibrium point, the dependence of $y$ on $x$ vanishes as $x$ becomes indistinguishable from a constant. Theorem 17.0.3 in~\cite{WIG} makes this precise. 
We consider the case $x = \sqrt{\mu}$. The case $x = -\sqrt{\mu}$ is treated similarly. The governing ODE for the second cell then becomes 
\begin{equation} \label{eq:y_system}
  \dot y = - y^3 + (\mu + \varepsilon) y - \lambda \sqrt{\mu}\equiv p_+(y).
\end{equation}
The perturbation $\varepsilon$ lets the free term and the coefficient of $y$ in the right-hand side of Eq. \eqref{eq:y_system} change independently at any fixed $\lambda$.
We denote $\lambda\sqrt{\mu}$ by $\hat{\lambda}$ and $\mu + \varepsilon$ by $\hat{\mu}$ and rewrite Eq. \eqref{eq:y_system} as 
\begin{equation} \label{eq:y_system_hat}
  \dot y = - y^3 + \hat{\mu} y  - \hat{\lambda} = :G(y,\hat{\lambda},\hat{\mu})\equiv g(y,\hat{\lambda}) + \hat{\mu} y,
\end{equation}
where $\hat{\lambda}\in\mathbb{R}$. We treat the function $ g(y,\hat{\lambda}) = -y^3 -\hat{\lambda}$ as the main function, or a \emph{germ}, and the term $\hat{\mu} y$ as a perturbation to it.  Looking for the equilibria of the unperturbed system $\dot{y} = g(y,\hat{\lambda})$, we solve $g(y,\hat{\lambda}) = 0$ for $y$ and plot
 $y$ as a function of $\hat{\lambda}$. 
 This germ has a unique root $y = -\hat{\lambda}^{1/3}$. The graph of $y$ versus $\hat{\lambda}$ has a vertical tangent and an inflection point at $\hat{\lambda} = 0$. At this point, $g = g_y = g_{yy} = 0$, which means that this is a \emph{hysteresis point}.  The bifurcation diagram at a hysteresis point is not persistent: a small perturbation of the germ changes its root structure.

Singularity theory \cite{GS1} is a tool for identifying qualitative changes in bifurcation diagrams of germs, i.e., in their root structure,  arising from small perturbations. The germ $ g(y,\hat{\lambda}) = -y^3 -\hat{\lambda}$ matches the first example in (\cite{GS1}, Section III.4(b)), where
it is proven that the function $G(x,\hat{\lambda},\hat{\mu}) = g(y,\hat{\lambda}) + \hat{\mu}y$ is a \emph{universal unfolding} of $g$. This means that
\begin{enumerate}
\item $G(x,\hat{\lambda},0) = g(x,\hat{\lambda})$,
\item $G(x,\hat{\lambda},\boldsymbol{\alpha})$, $\boldsymbol{\alpha}\in\mathbb{R}^k$, captures all possible qualitative behaviors of the solution branches of $g(x,\hat{\lambda}) +\alpha p(x,\hat{\lambda},\alpha) = 0$, as $\hat{\lambda}$ runs through the real line, resulting from arbitrary small perturbations one-parameter perturbations $\alpha p(x,\hat{\lambda},\alpha)$,  and
\item $k$ is the minimal number of parameters, in addition to $\hat{\lambda}$, to do so.
\end{enumerate}
In our case, this minimal number of parameters is one.
Thus, we find all possible bifurcations of $\dot{y} = G(y,\hat{\lambda},\hat{\mu})$. The necessary condition for $(y_0,\hat{\lambda}_0,\hat{\mu}_0)$ to be a \emph{singularity}, or a bifurcation point, i.e., a point where the number of the solutions $y$ to $G(y,\hat{\lambda},\hat{\mu}) = 0$ may change, is 
\begin{equation}
    \label{eq:singularity_cond}
    G(y_0,\hat{\lambda}_0,\hat{\mu}_0) = \frac{\partial}{\partial y}G(y_0,\hat{\lambda}_0,\hat{\mu}_0) = 0.
\end{equation}
Eq. \eqref{eq:singularity_cond} yields the condition $\hat{\lambda}_0^2 = \tfrac{4}{27}\hat{\mu}_0^3$.
Recalling that $\hat{\mu} = \mu + \varepsilon$ and $\hat{\lambda} = \lambda\sqrt{\mu}$, we get the locus of the bifurcation point:
\begin{equation} \label{eq:locus_bif}
   \lambda^2 = {4 \over 27 \mu}(\mu + \varepsilon)^3,
\end{equation}
which, unsurprisingly, follows from Eq. \eqref{eq:poly2roots} obtained from the condition for the cubic polynomials in Eq. \eqref{eq:y_system} to have exactly two roots.
\medskip

Fig.~\ref{fig:Two_Par_Diag} shows a two-parameter, $(\varepsilon, \lambda)$, diagram of the locus of the bifurcation captured by Eq.~\eqref{eq:locus_bif} for a representative value of $\mu=0.2$. For this particular value, the hysteresis point is located at $\varepsilon = -\mu =-0.2$. 
To the left of the locus of the bifurcation, there is only one equilibrium point, and it is stable. To the right of it, there are three equilibria, two of which are stable. The two additional equilibria are born at a saddle-node bifurcation, which is captured by the locus. The insets with the red markers showcase the transitions in the number and stability of the zeros of the polynomial $p_+(y)$ for a path in which $\varepsilon$ varies while $\lambda$ is held fixed.

Furthermore, since Eq.~\eqref{eq:locus_bif} is invariant under the change $\lambda \mapsto - \lambda$, then the transitions with $\lambda < 0$ are the same as those with positive $\lambda$. It follows that the locus of the saddle-node bifurcation applies identically to both $p_-(y)$ and $p_+(y)$.
\begin{figure}[htbp]
\centerline{\includegraphics[width=0.9\textwidth]{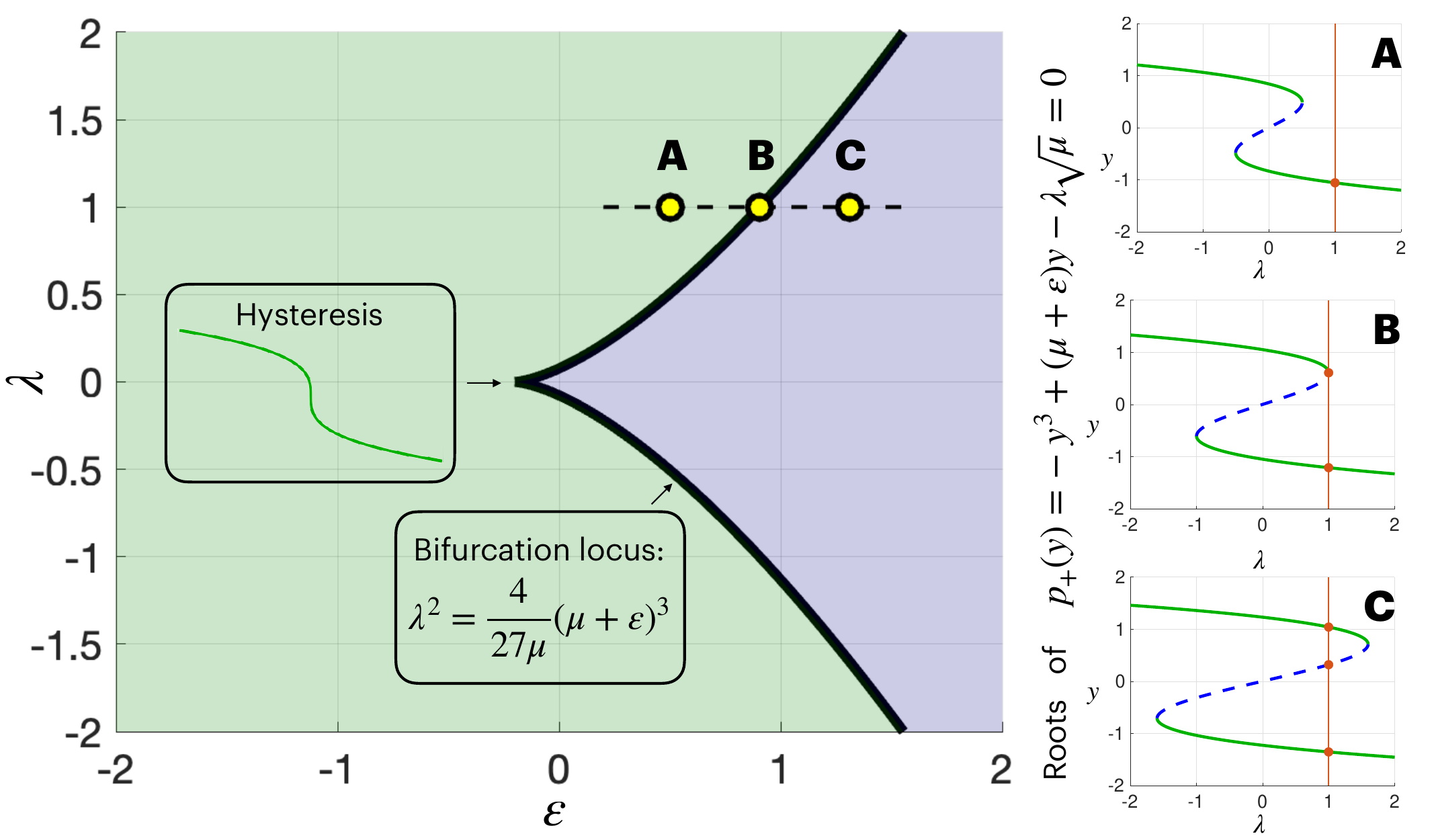}}
\caption{\small Two-parameter bifurcation diagram in the $(\varepsilon, \lambda)$-plane. Solid (green) curves in the insets indicate stable equilibrium points. Dashed (blue) curves depict unstable equilibrium points. The parameter $\mu$ was set to $\mu = 0.2$, as a representative example. For a fixed value of $\lambda$, for instance $\lambda = 1$, while varying $\varepsilon$ from left to right, the number of equilibrium points (red filled-in circles in the insets), i.e., roots of $p_+(y)$, changes from one (to the left of the locus of a saddle-node bifurcation) to two (on the locus), and then to three (to the right of the locus).}
\label{fig:Two_Par_Diag}
\vspace*{12pt}
\end{figure}


\section{A two-cell feedforward network of Stuart-Landau oscillators}
\label{sec:Hopf}

In this section, we study the effects of inhomogeneities in the excitation parameter $\mu$, the frequency $\omega$, and the cubic nonlinearity parameter $\gamma$, in systems of two Stuart-Landau oscillators with a feedforward coupling. 
We first systematically study the case with an inhomogeneity in the natural frequency alone, as this paves the way for analyzing the system with inhomogeneities in all parameters. 
Then we add inhomogeneity in the excitation and nonlinearity parameters.

\subsection{Inhomogeneity in the natural frequency}
\label{sec:system2}
Fig.~\ref{fig:FFwd_Disorder_Freq} shows a schematic diagram of a two-cell system with inhomogeneity in the frequency parameter.
\begin{figure}[H]
\begin{minipage}{.4\textwidth}
  \centering
\begin{tikzpicture}[->,>=stealth',shorten >=1pt,auto,node distance=2.4cm,
  thick,node1/.style={circle,fill=violet!30,draw,font=\sffamily\Large\bfseries},node2/.style={circle,fill=yellow!30,draw,font=\sffamily\Large\bfseries}]

  \node[node1] (1) {1};
  \node[node2] (2) [right of =1] {2};

  \path[every node/.style={font=\sffamily\Large}]
   (1) edge node {$\lambda$} (2);
   
\end{tikzpicture}
\end{minipage}%
\begin{minipage}{.6\textwidth}
 \centering
\begin{equation} \label{eq:system2}
\begin{aligned}
\dot{z_1} & = (\mu +i\omega)z_1 - |z_1|^2z_1,\\
\dot{z_2} & = (\mu  + i(\omega +\sigma) )z_2 - |z_2|^2z_2  - \lambda z_1.   
\end{aligned}
\end{equation}
\end{minipage}
\caption{A schematic diagram of a two-cell feed-forward network with Stuart-Landau cells with an inhomogeneity in the natural frequency represented by $\sigma$.}
\label{fig:FFwd_Disorder_Freq}
\end{figure}
System~\eqref{eq:system2} was analyzed in~\cite{Levasseur_Palacios_2021} using the two-timing method (see e.g.~\cite{Strogatz}, Section 7.6), aiming at finding periodic solutions and focusing on plots of the squared amplitude of the second cell, $|z_2|^2$, versus $\sigma$ at selected representative values of $\mu$. Here, we adopt a different approach by switching to a co-rotating frame and reducing the system, thereby eliminating the parameter $\lambda$ and redefining the parameters $\mu$ and $\sigma$. This allows us to present a complete phase diagram in the $(\sigma,\mu)$-plane in Fig.~\ref{fig:system2_phase} and show that system~\eqref{eq:system2} admits two types of attractors: stable limit cycles and invariant tori.  

\medskip
Let $\mu > 0$. We assume that the first oscillator is at its periodic attractor, i.e., 
\begin{equation}
\label{eq:z1_sys2}
    z_1(t) = \sqrt{\mu}e^{\omega t i}.
\end{equation} 
Then we seek a solution for the second oscillator of the form 
\begin{equation}
\label{eq:z2_sys2}
    z_2(t) = u(t)e^{\omega t i}\equiv \left(u_R(t) + iu_I(t)\right)e^{\omega t i},
\end{equation} 
 where $u(t)$ is a complex-valued function that needs to be found, and $u_R$ and $u_I$ are its real and complex parts, respectively. Plugging Eqs.~\eqref{eq:z1_sys2} and \eqref{eq:z2_sys2} into Eq.~\eqref{eq:system2} and canceling the term $e^{\omega t i}$, we obtain the following equation for $u$:
 \begin{equation}
     \label{eq:u_sys2}
     \dot{u} = (\mu + i\sigma)u - |u|^2u - \lambda\sqrt{\mu}.
 \end{equation}

\subsubsection{Reducing the number of parameters}
\label{sec:system2_reducing}
We aim to investigate the structure and stability of equilibria of Eq. \eqref{eq:u_sys2} at all $\mu > 0$, $\sigma\in\mathbb{R}$, and $\lambda > 0$. Making $\lambda < 0$ has the same effect as multiplying $u$ by the factor of $e^{i\pi/2}$.  The equilibria of Eq. \eqref{eq:u_sys2} correspond to periodic attractors of system \eqref{eq:system2}. We aim to reduce the number of parameters from three to two. First, we normalize $u$ by introducing  $v: = u/\sqrt{\mu} $. Then $|v| > 1$ means that the amplitude of $z_2$ is larger than that of $z_1$, and the other way around if $|v| < 1$.
Next, we rescale the time as $\tau = \lambda t$. Then $\tfrac{d}{dt}= \lambda\tfrac{d}{d\tau}$. Finally, we divide the resulting ODE for $v$ by $\lambda$ and introduce $\tilde{\mu}: = \mu/\lambda$ and $\tilde{\sigma} = \sigma/\lambda$. 
In summary, the reduction setup is
\begin{equation}
    \label{eq:system2:reduction_setup}
    u = v\sqrt{\mu},\quad \tau = \lambda t,\quad \tilde{\mu} = \frac{\mu}{\lambda},\quad \tilde{\sigma} = \frac{\sigma}{\lambda}.
\end{equation}
The resulting ODE for $v$ has only two parameters, $\tilde{\mu}$ and $\tilde{\sigma}$:
\begin{equation}
    \label{eq:v_sys2}
    \dot{v} = \tilde{\mu}(v - |v|^2v) + i\tilde{\sigma} v  -1.
\end{equation}
The corresponding system for the real and complex parts of $v$, $v = v_R + iv_I$, is:
 \begin{equation}
 \label{eq:vrvi}
  \begin{array}{ll}
\dot{v}_R & = \tilde{\mu}(1 - |v|^2)v_R - \tilde{\sigma} v_I - 1,\\[5pt]
\dot{v}_I & = \tilde{\mu}(1 - |v|^2)v_I + \tilde{\sigma} v_R.  
\end{array}
 \end{equation}

\subsubsection{Periodic solutions and their stability}
\label{sec:system2_periodic}
We are primarily interested in phase-locked attractors of ODE \eqref{eq:system2}, which correspond to equilibria of Eq. \eqref{eq:v_sys2}. Thus, we seek the roots of the system of algebraic equations
 \begin{equation}
 \label{eq:vrvi_equil}
  \begin{array}{ll}
 \tilde{\mu}(1 - |v|^2)v_R - \tilde{\sigma} v_I & = 1,\\[5pt]
 \tilde{\mu}(1 - |v|^2)v_I + \tilde{\sigma} v_R & = 0.  \end{array}
 \end{equation}
  Squaring the left-and right-hand sides of each equation in Eq. \eqref{eq:vrvi_equil}, adding them together, we find that the level sets of $|v|^2$ are semi-ellipses in the $(\tilde{\sigma},\tilde{\mu})$-plane:
  \begin{equation}
      \label{eq:ellipse0}
      |v|^2\tilde{\sigma}^2 +|v|^2(1-|v|^2)^2 \tilde{\mu}^2 = 1,\quad \tilde{\mu}> 0.
  \end{equation}
  These semi-ellipses, shown in Fig. \ref{fig:system2_phase}, are centered at the origin and have the semiaxes $r_{\tilde{\sigma}} = |v|^{-1}$ and $r_{\tilde{\mu}} = |v|^{-1}|1-|v|^2|^{-1}$. If $|v|^2 > 2$, $r_{\tilde{\sigma}}$ is the major semiaxis, while if $|v|^2 < 2$, this is the other way around. If $|v|^2 = 1$, the ellipse degenerates into two lines, $\tilde{\sigma} = \pm 1$. 
        If $|v|^2 \in (0,1)$, the semi-ellipses fill out the exterior of the region bounded by $\tilde{\sigma} = 0$, the dark green dash-dotted curve in Fig. \ref{fig:system2_phase} defined by Eq. \eqref{eq:dash-dot}, and the lines $|\tilde{\sigma}| = \pm 1$. As $|v|^2$ increases from $0$ to $\tfrac{1}{3}$, the semi-ellipses, plotted with gray dashed lines in Fig. \ref{fig:system2_phase}, shrink to the semi-ellipse passing through $(0,\tfrac{3\sqrt{3}}{2})$. As $|v|^2$ further increases from $\tfrac{1}{3}$ to $1$, the semi-ellipses, plotted with solid or dashed lines of grey shades, if $|v|^2 \in (\tfrac{1}{3},\tfrac{1}{2})$, or red,  brown and dark yellow shades, if $|v|^2 \in (\tfrac{1}{2},1)$, become thinner and taller, finally degenerating to the lines $\tilde{\sigma} = \pm 1$ as $|v|^2\rightarrow 1$. If $|v|^2 > 1$, the semi-ellipses, plotted with blue and green solid lines,  foliate the strip $|\tilde{\sigma}| < 1$ and shrink to a point around the origin as $|v|^2\rightarrow\infty$. Throughout the rest of this paper, we will omit ``semi-" for brevity and refer to the semi-ellipses as ``ellipses".

        \begin{figure}[htbp]
\centerline{\includegraphics[width=0.9\textwidth]{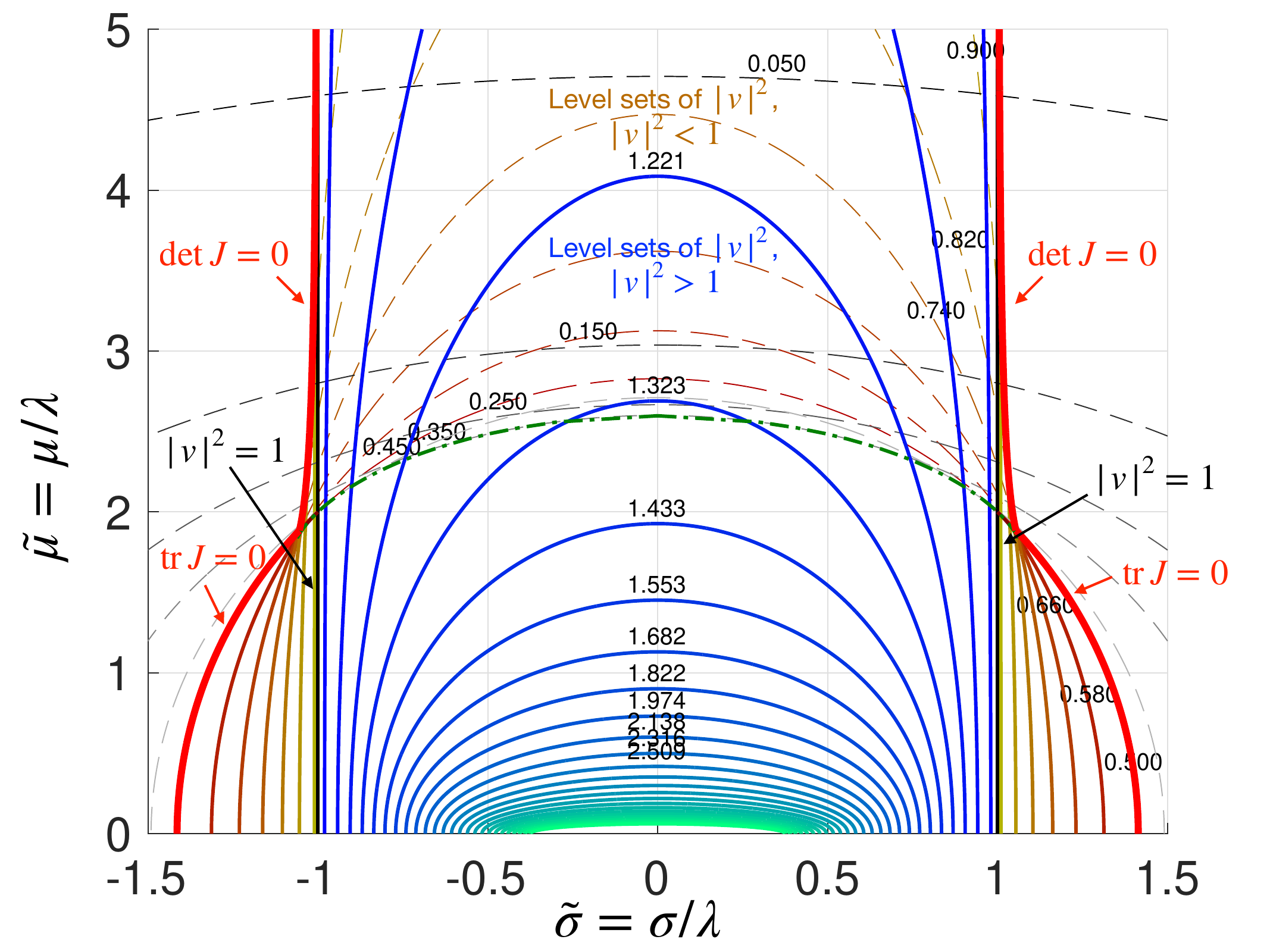}}
 \caption{The phase diagram for system \eqref{eq:system2} with the inhomogeneity in the natural frequency in the parameter space $(\tilde{\sigma} = \sigma/\lambda,\tilde{\mu} = \mu/\lambda)$.
The thick red curves belong to the boundary of the region where an asymptotically stable periodic solution to ODE \eqref{eq:system2} exists, i.e., the Stuart-Landau feedforward network with inhomogeneity in frequency is phase locked. A part of this thick red curves belong to the ellipse \eqref{eq:ellipse} corresponding to ${\rm tr}\thinspace J(|v|^2) = 0$, i.e., $|v|^2 = \tfrac{1}{2}$. The  other part belongs to the curve where $\det J = 0$ (Eq. \eqref{eq:det_bdry}) plotted dash-dotted green. The medium-weight solid curves of blue and green shades depict the level sets of $|v|^2 > 1$ corresponding to asymptotically stable periodic solutions. The black vertical lines  $\tilde{\sigma} = \pm 1$ correspond to stable periodic solutions with $|v|^2 = 1$. The medium-weight solid and thin dashed curves of red,  brown and dark yellow shades correspond, respectively, to asymptotically stable and unstable periodic solutions with $\tfrac{1}{2} < |v|^2 < 1$. The dashed curves of grey tones depict ellipses with $0 < |v|^2 < \tfrac{1}{2}$. The black numbers next to the curves indicate the corresponding values of $|v|^2$. 
}
\label{fig:system2_phase}
\vspace*{12pt}
\end{figure}

        The analysis of this family of ellipses allows 
        us to discover dynamical properties of system \eqref{eq:system2}. The stability of the equilibria of ODE \eqref{eq:vrvi} is determined by the signs of the determinant and the  trace of the Jacobian matrix of Eq. \eqref{eq:vrvi}:
            \begin{equation}
        J(v_R,v_I) = \left[\begin{array}
        {cc}\tilde{\mu}(1-|v|^2) - 2\tilde{\mu}v_R^2 & -\tilde{\sigma} - 2\tilde{\mu}v_Iv_R \\
        \tilde{\sigma} - 2\tilde{\mu}v_Rv_I & \tilde{\mu}(1-|v|^2) - 2\tilde{\mu}v_I^2\end{array}\right].
    \end{equation}
The conditions for the asymptotic stability are
\begin{align}
    \det J& = \tilde{\mu}^2 (1 - |v|^2)(1 - 3|v|^2) + \tilde{\sigma}^2 > 0,\label{eq:sys2_det_main}\\
    {\rm tr}\thinspace J & = 2\tilde{\mu}(1 - 2|v|^2) <0. \label{eq:sys2_trace_main}
\end{align}
The curve corresponding to ${\rm tr}\thinspace J = 0$ is the ellipse with $|v|^2 = \tfrac{1}{2}$:
\begin{equation}
    \label{eq:ellipse_trace0}
    \frac{\tilde{\sigma}^2}{2} + \frac{\tilde{\mu}^2}{8} = 1.
\end{equation}
It is depicted by the thick red solid line and the red dashed line in Fig. \ref{fig:system2_phase}.
The curve corresponding to $\det J = 0$ is plotted with the green dash-dotted line and the thick red solid line. A zoom-in of the kink of this curve is shown in Fig. \ref{fig:system2_phase_zoomin}. It is given parametrically by Eq. \eqref{eq:dash-dot} below.  Since the same pair $(\tilde{\sigma},\tilde{\mu})$ admits up to three solutions, the task of finding the boundary of the region where an asymptotically stable equilibrium of Eq. \eqref{eq:vrvi} exists is nontrivial. The resulting boundary is highlighted with the thick red solid curve in Fig. \ref{fig:system2_phase}.  It turns out that the curve $\det J = 0$ also bounds the region where Eq. \eqref{eq:vrvi} has three equilibria. The ellipses with $|v|^2\in(\tfrac{1}{3},1)$ touch the curve $\det J = 0$, rather than cross it, and the stability of the corresponding equilibria of Eq. \eqref{eq:vrvi} along each such ellipse changes at these tangent points. The structure and stability of the equilibria of Eq. \eqref{eq:vrvi} is summarized in Proposition \ref{prop1}.      

\begin{figure}[htbp]
\centerline{\includegraphics[width=0.9\textwidth]{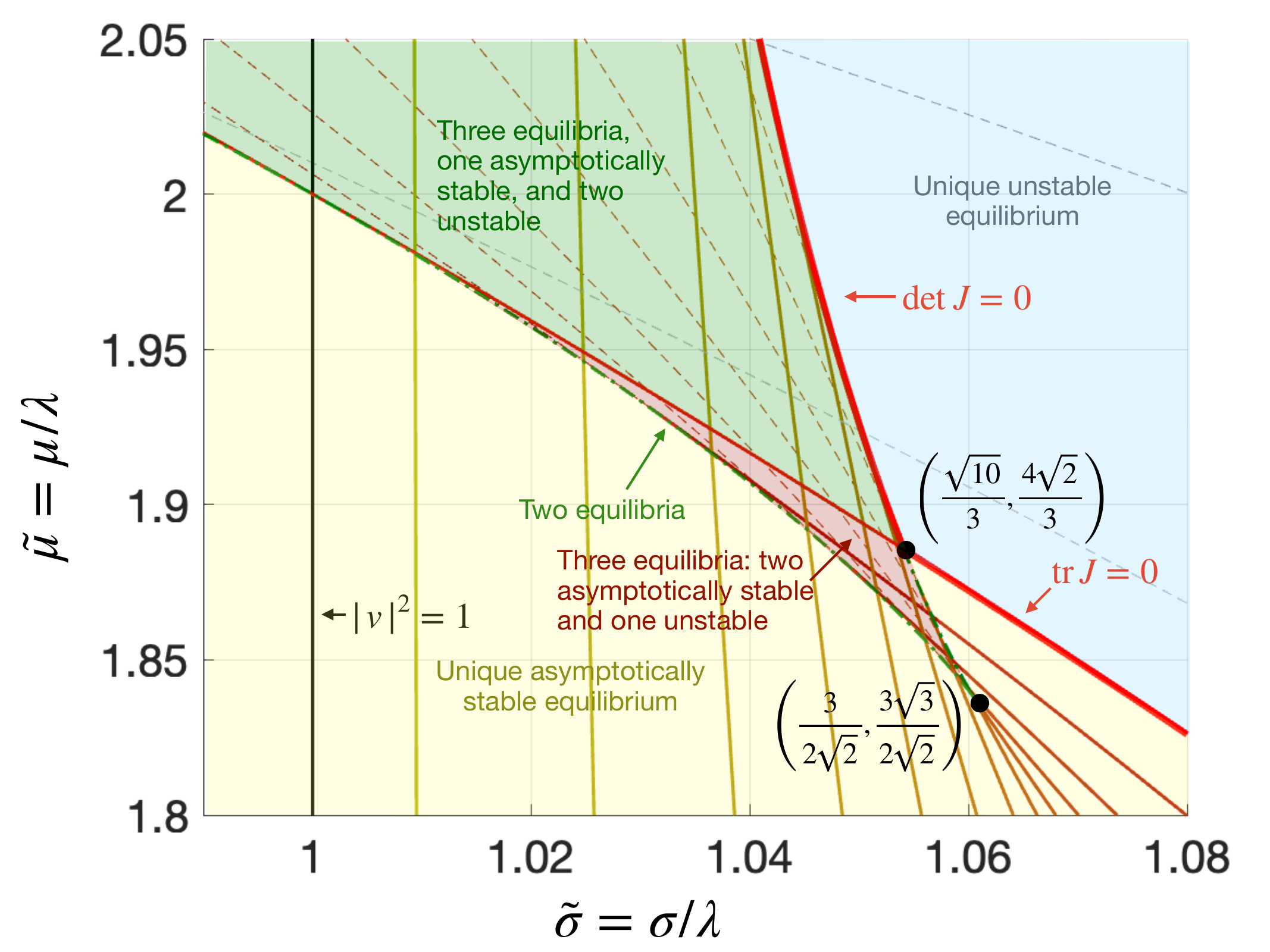}}
 \caption{The phase diagram for system \eqref{eq:system2} with the inhomogeneity in the natural frequency in the parameter space $(\tilde{\sigma} = \sigma/\lambda,\tilde{\mu} = \mu/\lambda)$: a zoom-in of the region surrounding the tip of the region where three equilibria of ODE \eqref{eq:vrvi}, one asymptotically stable exist. The region containing two asymptotically stable equilibria and one unstable equilibrium is shaded pink. The area with one asymptotically stable equilibrium and two unstable is shaded green. The areas where the unique equilibrium is asymptotically stable or unstable are yellow and blue, respectively.
}
\label{fig:system2_phase_zoomin}
\vspace*{12pt}
\end{figure}

\begin{proposition}
\label{prop1}
\begin{enumerate}
    \item  Let $|\tilde{\sigma}| \le 1$. Then ODE \eqref{eq:vrvi} has a unique equilibrium with $|v|\ge 1$ at all $\tilde{\mu} > 0$, and this equilibrium is asymptotically stable. 

    \item If $\tilde{\sigma} = 0$, the asymptotically stable equilibrium solution to ODE \eqref{eq:vrvi} is real and blows up as $|v|\approx\tilde{\mu}^{-1/3}$ as $\tilde{\mu}\rightarrow0$.

    \item The region in the $(\tilde{\sigma},\tilde{\mu})$-space where an asymptotically stable equilibrium of ODE \eqref{eq:vrvi} exists is bounded by the arcs of the ellipse
    \begin{equation}
    \label{eq:ellipse}
        \frac{\tilde{\mu}^2}{8} + \frac{\tilde{\sigma}^2}{2} = 1,\quad \tilde{\mu} \le \frac{4}{\sqrt{5}}|\tilde{\sigma}|,
    \end{equation}
    and the curves defined implicitly as:
    \begin{equation}
    \label{eq:det_bdry}
    |v|^2 \in \left[\frac{3}{4},1\right),\quad \tilde{\sigma} = \pm\sqrt{\frac{3|v|^2 - 1}{2|v|^4}},\quad \tilde{\mu} = \frac{1}{|v|^2\sqrt{2(1-|v|^2)}},
    \end{equation}
    The amplitude squared, $|v|^2$, of the asymptotically stable equilibrium solution ranges from $\tfrac{1}{2}$ to $+\infty$. 
    
    \item Three equilibria of ODE \eqref{eq:vrvi} exist in the region above the concatenation of the parametric curve 
     \begin{equation}
        \label{eq:dash-dot}
        |v|^2 \in \left[\frac{1}{3},1\right),\quad \tilde{\sigma} = \sqrt{\frac{3|v|^2 - 1}{2|v|^4}},\quad \tilde{\mu} = \frac{1}{|v|^2\sqrt{2(1-|v|^2)}}
    \end{equation}
    and its reflection with respect to the $\tilde{\mu}$-axis. This curve
    starts at $(0,\tfrac{3\sqrt{3}}{2})$ at $|v|^2 = \tfrac{1}{3}$, has a singularity at $|v|^2 = \tfrac{2}{3}$, where $(\tilde{\sigma},\tilde{\mu}) = (\tfrac{3}{2\sqrt{2}},\tfrac{3\sqrt{3}}{2\sqrt{2}})$, associated with $\tfrac{d\tilde{\sigma}}{d|v|^2}(\tfrac{2}{3}) = 0$, and approaches the vertical asymptote $|v|^2 = 1$ from the right as $|v|^2\rightarrow 1$. 

    The region with three equilibria is divided into three subregions by the ellipse $\tfrac{\tilde{\mu}^2}{8} + \tfrac{\tilde{\sigma}^2}{2} =1$. Two asymptotically stable equilibria exist in the two subregions lying inside this ellipse and above the curve \eqref{eq:dash-dot}. Only one asymptotically stable equilibrium exists in the subregion outside the ellipse and above the curve \eqref{eq:dash-dot}.

\end{enumerate} 
\end{proposition}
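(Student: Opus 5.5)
The plan is to reduce everything to the single scalar unknown $s:=|v|^2$. Given $s$ and $(\tilde{\sigma},\tilde{\mu})$ on the ellipse \eqref{eq:ellipse0}, the linear system \eqref{eq:vrvi_equil} in $(v_R,v_I)$ has matrix with determinant $\tilde{\mu}^2(1-s)^2+\tilde{\sigma}^2>0$. Hence it determines $v$ uniquely, so equilibria are in bijection with the positive roots $s$ of
\[
F(s):=s\bigl(\tilde{\sigma}^2+\tilde{\mu}^2(1-s)^2\bigr)-1=0 .
\]
This is a cubic in $s$ with $F(0)=-1$ and $F(s)\to+\infty$. Its derivative is
\[
F'(s)=\tilde{\mu}^2(1-s)(1-3s)+\tilde{\sigma}^2 ,
\]
which is exactly $\det J$ in \eqref{eq:sys2_det_main}. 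So a simple root with $F'>0$ has $\det J>0$, and stability then reduces to the trace condition \eqref{eq:sys2_trace_main}, i.e. $s>\tfrac12$. This identification $\det J=F'(s)$ is the backbone of the whole argument.

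For item 1, with $|\tilde{\sigma}|\le1$ we have $F(1)=\tilde{\sigma}^2-1\le0$, so there is a root $s\ge1$. For $s\ge1$, $F'(s)=\tilde{\mu}^2(s-1)(3s-1)+\tilde{\sigma}^2\ge0$, with equality only in the degenerate case $s=1,\tilde{\sigma}=0$, which is excluded since then $F(1)=-1$. So $F$ is increasing on $[1,\infty)$, the root there is unique, and $\det J>0$ and $\operatorname{tr}J<0$ give asymptotic stability. For item 2, $\tilde{\sigma}=0$ makes $v_I=0$, and $\tilde{\mu}v(1-v^2)=1$ has a real root with $|v|\to\infty$ balancing $-\tilde{\mu}v^3\approx1$. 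This gives $|v|\approx\tilde{\mu}^{-1/3}$.

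For item 4, three positive roots occur iff $F$ has a local max and min in $s>0$ with $F(s_{\max})>0>F(s_{\min})$. The boundary is the discriminant locus $F=F'=0$. I will solve these two equations, which are linear in $\tilde{\sigma}^2,\tilde{\mu}^2$: subtracting $F'$ from $F/s$ gives $\tilde{\mu}^2\bigl[(1-s)^2-(1-s)(1-3s)\bigr]=1/s$, i.e. $\tilde{\mu}^2\,2s(1-s)=1/s$. This yields \eqref{eq:dash-dot}, and then $\tilde{\sigma}^2=\tilde{\mu}^2(1-s)(3s-1)=(3s-1)/(2s^2)$. Positivity requires $s\in[\tfrac13,1)$. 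The endpoint values and the cusp at $s=\tfrac23$ follow by differentiating $\tilde{\sigma}(s)$, and the behaviour as $s\to1$ is read off directly. I will justify "above the curve" by the monotonicity $F_{\tilde{\mu}^2}\ge0$: increasing $\tilde{\mu}$ at fixed $\tilde{\sigma}$ deepens the dip of $F$, taking one root to three.

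For item 3 and the subregion count, note that a root is stable iff $F'(s)>0$ and $s>\tfrac12$. The roots with $F'>0$ are the smallest and the largest, and the largest is always stable when it exceeds $\tfrac12$. The stability region is therefore bounded either by the largest root crossing $s=\tfrac12$, which is the trace ellipse \eqref{eq:ellipse}, or by stable roots disappearing through the fold curve. On the ellipse $s=\tfrac12$, I will check that $F'(\tfrac12)=\tilde{\sigma}^2-\tilde{\mu}^2/4>0$, giving the restriction $\tilde{\mu}\le\tfrac{4}{\sqrt5}|\tilde{\sigma}|$. Where this fails, the boundary switches to the fold curve, and the switch point is where that curve meets the ellipse, namely $s=\tfrac34$, which accounts for the range $[\tfrac34,1)$ in \eqref{eq:det_bdry}. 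Inside the ellipse and in the three-root region, the smallest root also has $s>\tfrac12$, so there are two stable equilibria; outside, only the largest is stable.

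The main obstacle is this last global bookkeeping: showing rigorously which root crosses $s=\tfrac12$ in each subregion, and that the assembled boundary is exactly the stated arcs. I would attack it by tracking the ordering of the roots relative to $\tfrac12$ via the sign of $F(\tfrac12)$, which is negative inside the ellipse, together with the sign of $F'(\tfrac12)$.
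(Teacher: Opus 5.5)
Your reduction to the cubic $F(s)=s\bigl(\tilde{\sigma}^2+\tilde{\mu}^2(1-s)^2\bigr)-1$, together with the identity $F'(s)=\det J$, is correct. It is a cleaner route than the paper's Witch-of-Agnesi and ellipse-foliation argument, and items 1, 2 and the fold-curve computation go through. The gaps are in the stability bookkeeping for items 3 and 4, where two of your steps are false.

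First, the ellipse cutoff. On the ellipse, $F'(\tfrac12)=\tilde{\sigma}^2-\tilde{\mu}^2/4>0$ is equivalent to $\tilde{\mu}<2|\tilde{\sigma}|$, i.e.\ $\tilde{\mu}<2$. This cuts the ellipse at $(\pm1,2)$, where it touches the fold curve at $s=\tfrac12$, not at $s=\tfrac34$. So it does not yield $\tilde{\mu}\le\tfrac{4}{\sqrt5}|\tilde{\sigma}|$.

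It is also the wrong criterion. For $\tfrac{4\sqrt2}{3}<\tilde{\mu}<2$ the ellipse runs through the three-root region. There the largest root exceeds the local minimum $s_2>\tfrac23$ and stays a sink on both sides, so the ellipse is not a boundary. What you need is that $s=\tfrac12$ be the \emph{largest} root.

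With your tools this closes cleanly. In the three-root region the largest root is always a sink. In the one-root region the unique root is a sink iff $F(\tfrac12)<0$, i.e.\ inside the ellipse. Hence the boundary consists of the part of the ellipse outside the three-root region and the part of the fold curve outside the ellipse. Along the fold curve one has $\tfrac{\tilde{\mu}^2}{8}+\tfrac{\tilde{\sigma}^2}{2}-1=\frac{(2s-1)^2(4s-3)}{16s^2(1-s)}$. So the lower branch stays inside the ellipse (tangent at $s=\tfrac12$), while the upper branch exits at $s=\tfrac34$. This gives both the range $[\tfrac34,1)$ and the cutoff $\tilde{\mu}\le\tfrac{4}{\sqrt5}|\tilde{\sigma}|$.

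Second, your claim that inside the ellipse, in the three-root region, the smallest root has $s>\tfrac12$ is false. Take $\tilde{\sigma}=0$, $\tilde{\mu}=2.7$, which is inside the ellipse and above the top $\tfrac{3\sqrt3}{2}$ of the fold curve. The roots are $s\approx0.23$, $0.44$, $1.32$: a source, a saddle and a single sink.

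Because $\tfrac12<\tfrac23<s_2$, the smallest root exceeds $\tfrac12$ iff $F(\tfrac12)<0$ \emph{and} $F'(\tfrac12)>0$. That means inside the ellipse and $\tilde{\mu}<2|\tilde{\sigma}|$. The line $\tilde{\mu}=2|\tilde{\sigma}|$ passes exactly through the tangency points $(\pm1,2)$, so two sinks exist only in the two cusp tips with $|\tilde{\sigma}|>1$. This is what the paper's proof obtains as the overlap of its two families of stable ellipse arcs, $s\in(\tfrac12,\tfrac23]$ and $s\in(\tfrac23,1)$. Both families lie in $|\tilde{\sigma}|>1$, since on a stable arc with $s\in(\tfrac12,1)$ one has $\tilde{\sigma}^2>\tfrac{3s-1}{2s^2}>1$. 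The statement's ``two subregions inside the ellipse'' must therefore be read as these tips. As written, your argument would assert bistability in the central piece, where there is one sink.

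A smaller point: your monotonicity justification of ``above the curve'' needs repair. $F$ increases pointwise in $\tilde{\mu}^2$, so both critical values $F(s_1)$ and $F(s_2)$ increase with $\tilde{\mu}$. Since $F(s_2)<F(1)=\tilde{\sigma}^2-1$, for $|\tilde{\sigma}|\le1$ the three-root set at fixed $\tilde{\sigma}$ is everything above the lower branch. For $1<|\tilde{\sigma}|<\tfrac{3}{2\sqrt2}$, however, $F(s_2)\to\tilde{\sigma}^2-1>0$ as $\tilde{\mu}\to\infty$. There the set is a bounded $\tilde{\mu}$-interval closed off by the upper branch $s\in(\tfrac23,1)$, not a one-way transition from one root to three.
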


A proof of Proposition \ref{prop1} is found in Appendix \ref{appB}. Here, we make a few remarks.
\begin{remark}
    \begin{enumerate}
        \item The blow up of $|v|$ as $ \tilde{\mu}^{-1/3}$ at $\tilde{\sigma} = 0$ as $\tilde{\mu}\rightarrow0$ is consistent with the result proven in \cite{Levasseur_Palacios_2021}. Indeed, recalling that $\tilde{\mu} = \mu/\lambda$ and $z_2 = \sqrt{\mu}ve^{i\omega t}$, we obtain $|z_2|\approx \lambda^{1/3}\mu^{1/6}$ as $\mu\rightarrow 0$.

        \item If $\tilde{\sigma} \neq 0$, the amplification factor of the amplitude in the second cell is bounded from above by $\tilde{\sigma}^{-2} = \lambda^2/\sigma^2$. Hence, increasing the coupling strength and reducing frequency inhomogeneity increases the amplification effect.
        

        \item In the phase-locked regime, i.e., when an asymptotically stable periodic solution to ODE \eqref{eq:system2} exists, the amplitude in the second cell may be larger or smaller than the amplitude in the first cell. The ratio of the amplitudes is bounded from below by $\tfrac{1}{2}$, i.e., $|z_2|/|z_1| > \tfrac{1}{2}$. This immediately follows from the stability condition that the trace of the Jacobian of Eq. \eqref{eq:vrvi} must be negative -- see the proof.

    \end{enumerate}
\end{remark}

\subsubsection{Invariant tori}
 Fig.~\ref{fig:system2_phase} shows that when the natural frequency difference $|\tilde{\sigma}|$ is large enough, then the system has no periodic attractors. In this case, system \eqref{eq:system2} settles on an invariant torus attractor, so that cell $z_1$ performs a periodic motion with frequency $\omega$, and cell $z_2$ settles onto a limit cycle in the co-rotating frame of frequency $\omega$. If we fix $\tilde{\mu}$ and move along $\tilde{\sigma}$, the torus attractor appears as a result of a saddle-node bifurcation, if $\tilde{\mu} > \tfrac{4\sqrt{2}}{3}\approx 1.8856$, and as a result of a Hopf bifurcation, if $0 <\tilde{\mu} < \tfrac{4\sqrt{2}}{3}$. Fig. \ref{fig:system2_quiver} illustrates the change of the direction field and the birth of the torus attractor, which corresponds to a limit cycle in the co-rotating frame, at $\tilde{\mu} = 3.0$, $1.91$, and $0.5$.
 The invariant tori at $\lambda = 1$ and selected values of $\mu$ and $\sigma$ soon after the bifurcation point are shown in Fig. \ref{fig:system2_inv_tori}.
 
 \begin{figure}[htbp]
\centerline{\includegraphics[width=0.9\textwidth]{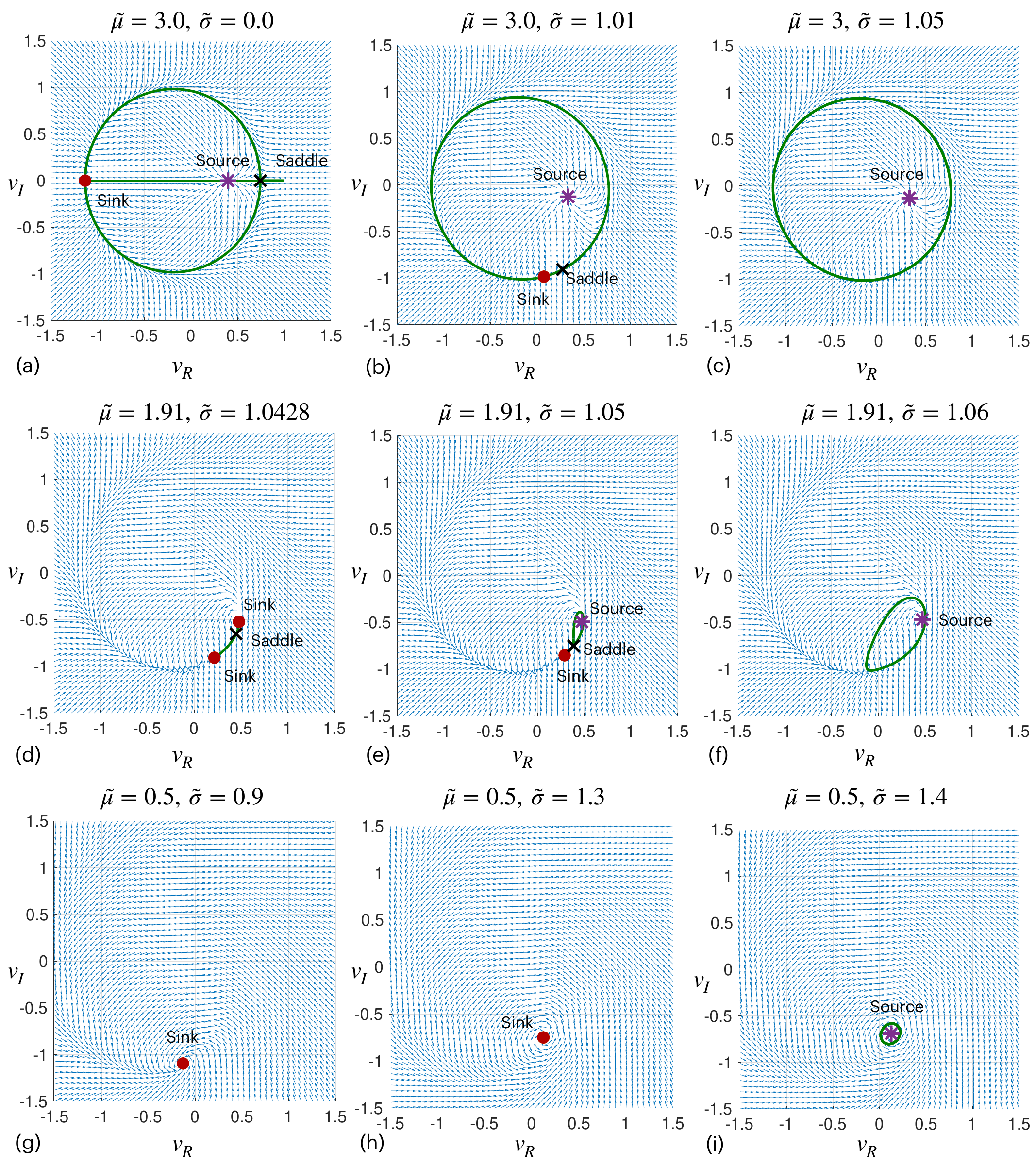}}
 \caption{The direction fields of ODE \eqref{eq:vrvi} at selected values of $\tilde{\mu}$ and $\tilde{\sigma}$. The birth of the limit cycle of ODE \eqref{eq:vrvi}, which corresponds to the invariant torus of ODE \eqref{eq:system2}, occurs via a saddle-node bifurcation, as at $\tilde{\mu} = 3$ and $1.91$, or a Hopf bifurcation, as at $\tilde{\mu} = 0.5$. The plot in (d) exemplifies a bistability situation.
}
\label{fig:system2_quiver}
\vspace*{12pt}
\end{figure}

 
 \begin{figure}[htbp]
\centerline{\includegraphics[width=0.9\textwidth]{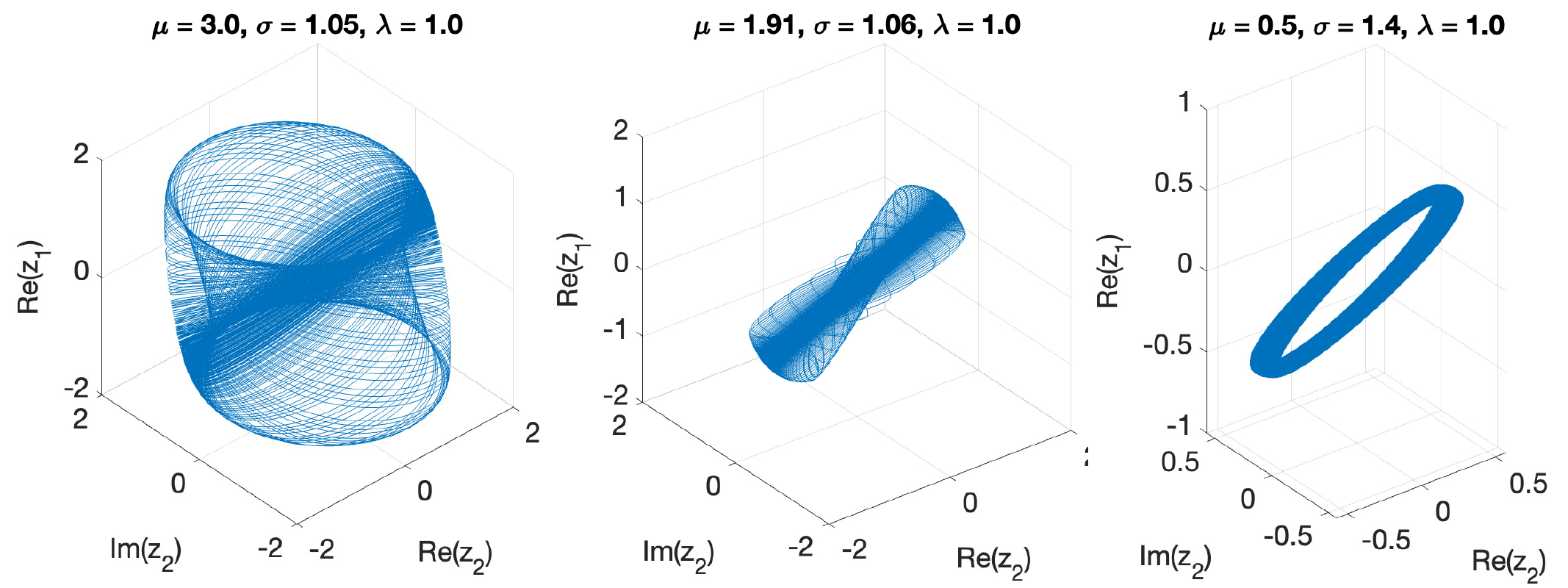}}
 \caption{The invariant tori attractors of ODE \eqref{eq:system2}.   
}
\label{fig:system2_inv_tori}
\vspace*{12pt}
\end{figure}

\subsection{Inhomogeneity in the excitation parameter and the natural frequency}
\label{sec:system3}

In this section, we consider system \eqref{eq:system3} with inhomogeneities in the excitation parameter $\mu$ and  the natural frequency $\omega$, represented by $\varepsilon$ and $\sigma$, respectively. The schematic of the feedforward network is shown in Fig.~\ref{fig:FFwd_Disorder_mu_omega}. 

\begin{figure}[H]
\begin{minipage}{.4\textwidth}
  \centering
\begin{tikzpicture}[->,>=stealth',shorten >=1pt,auto,node distance=2.4cm,
  thick,node1/.style={circle,fill=violet!30,draw,font=\sffamily\Large\bfseries},node2/.style={circle,fill=green!30,draw,font=\sffamily\Large\bfseries}]

  \node[node1] (1) {1};
  \node[node2] (2) [right of =1] {2};

  \path[every node/.style={font=\sffamily\Large}]
   (1) edge node {$\lambda$} (2);
   
\end{tikzpicture}
\end{minipage}%
\begin{minipage}{.6\textwidth}
 \centering
\begin{equation} \label{eq:system3}
\begin{aligned}
\dot{z}_1 & = (\mu +i\omega)z_1 - |z_1|^2 z_1\\
\dot{z}_2 & = ((\mu + \varepsilon)  + i(\omega +\sigma))z_2 - |z_2|^2z_2  - \lambda z_1. \end{aligned}
\end{equation}
\end{minipage}
\caption{A schematic diagram of a two-cell feed-forward network with Stuart-Landau cells with inhomogeneities in the natural frequency and the excitation parameter represented by $\sigma$ and $\varepsilon$, respectively.}
\label{fig:FFwd_Disorder_mu_omega}
\end{figure}

We assume that $\sigma\in\mathbb{R}$, $\mu > 0$ and $\lambda > 0$. Further, we will distinguish two cases: $\mu + \varepsilon \le 0$ and $\mu + \epsilon > 0$. If $\mu + \varepsilon \le 0$, there exists a unique globally attracting periodic solution to Eq. \eqref{eq:system3}. If $\mu + \varepsilon > 0$, an appropriate variable change together with a time scaling and a redefinition of parameters reduces the number of parameters to two and reduces Eq. \eqref{eq:system3} to Eq. \eqref{eq:vrvi} analyzed in Section \ref{sec:system2}.  

\subsubsection{The reduced system}
\label{sec:system3_reduced}
 We assume that the first cell, $z_1$, is at its periodic attractor, i.e., $z_1 = \sqrt{\mu}e^{i\omega t}$. Let $z_2(t) = u(t)e^{i\omega t}$, where $u(t)\in\mathbb{C}$ is the second cell in the co-rotating frame. The ODE for $u$ is
\begin{equation}
    \label{eq:syste3_u}
    \dot{u} = (\mu + \varepsilon) u - |u|^2u +i\sigma u - \lambda \sqrt{\mu}.
\end{equation}
\begin{itemize}
    \item {\bf Case $\boldsymbol{\mu} \boldsymbol{+} \boldsymbol{\varepsilon} \boldsymbol{>} \boldsymbol{0}$.}
Rescaling time, introducing a new dependent variable $v$, and redefining the parameters as
\begin{equation}
    \label{eq:redef_mu_sigma1}
    \tau = \lambda\sqrt{\tfrac{\mu}{\mu + \varepsilon}} t,\quad u = v\sqrt{\mu + \varepsilon},\quad \tilde{\mu}: = \frac{\mu + \varepsilon}{\lambda}\sqrt{\frac{\mu + \varepsilon}{\mu}},\quad \tilde{\sigma} =  \frac{\sigma}{\lambda}\sqrt{\frac{\mu + \varepsilon}{\mu}},
\end{equation} 
we obtain the ODE for $v$ that matches Eq. \eqref{eq:v_sys2}:
\begin{equation}
    \label{eq:system3_v1}
    \dot{v} = \tilde{\mu}(1-|v|^2)v + i\tilde{\sigma} v - 1.
\end{equation}
Therefore, we refer the reader to Section \ref{sec:system2} where a detailed analysis of this ODE is conducted.

\item  {\bf Case $\boldsymbol{\mu} \boldsymbol{+} \boldsymbol{\varepsilon} \boldsymbol{<} \boldsymbol{0}$.}
We redefine time, the dependent variable, and the parameters as
\begin{equation}
    \label{eq:redef_mu_sigma2}
    \tau = \lambda\sqrt{\tfrac{\mu}{|\mu + \varepsilon|}} t,\quad u = v\sqrt{|\mu + \varepsilon|},\quad \tilde{\mu}: = \frac{|\mu + \varepsilon|}{\lambda}\sqrt{\frac{|\mu + \varepsilon|}{\mu}},\quad \tilde{\sigma} =  \frac{\sigma}{\lambda}\sqrt{\frac{|\mu + \varepsilon|}{\mu}}
\end{equation} 
and obtain the following ODE for $v$:
\begin{equation}
    \label{eq:system3_v2}
    \dot{v} = -\tilde{\mu}(1+|v|^2)v + i\tilde{\sigma} v - 1.
\end{equation}
The ODE system for the real and imaginary components of $v$ is
\begin{equation}
    \label{eq:vrvi_minus}
    \begin{aligned}
        \dot{v}_R & = -\tilde{\mu}(1+|v|^2)v_R - \tilde{\sigma}v_I -1,\\
        \dot{v}_I & = -\tilde{\mu}(1+|v|^2)v_I + \tilde{\sigma}v_R.
    \end{aligned}
\end{equation}
Looking for equilibria of Eq. \eqref{eq:vrvi_minus} and following the steps conducted in Section \ref{sec:system2_periodic}, we obtain the following equation for $|v|^2$:
\begin{equation}
    \label{eq:v2_minus}
    \tilde{\mu}^2(1+|v|^2)^2 |v|^2 +\tilde{\sigma}^2|v|^2 = 1.
\end{equation}

\item {\bf Case $\boldsymbol{\mu} \boldsymbol{+} \boldsymbol{\varepsilon} \boldsymbol{=} \boldsymbol{0}$.}
We redefine time, the dependent variable, and the parameters as
\begin{equation}
    \label{eq:redef_mu_sigma0}
    \tau = \lambda t,\quad u = v\sqrt{\mu},\quad \tilde{\mu}: = \frac{\mu}{\lambda},\quad \tilde{\sigma} =  \frac{\sigma}{\lambda}
\end{equation} 
and obtain the ODE
\begin{equation}
    \label{eq:system3_v0}
    \dot{v} = -\tilde{\mu}|v|^2v + i\tilde{\sigma} v - 1.
\end{equation}
The corresponding ODE system for $v_R$ and $v_I$, where $v = v_R + iv_I$, is
\begin{equation}
    \label{eq:vrvi_zero}
    \begin{aligned}
        \dot{v}_R & = -\tilde{\mu}|v|^2v_R - \tilde{\sigma}v_I -1,\\
        \dot{v}_I & = -\tilde{\mu}|v|^2v_I + \tilde{\sigma}v_R.
    \end{aligned}
\end{equation}
The squared absolute value at the equilibria of Eq. \eqref{eq:vrvi_zero} must satisfy
\begin{equation}
    \label{eq:v2_zero}
    \tilde{\mu}^2|v|^6 +\tilde{\sigma}^2|v|^2 = 1.
\end{equation}
\end{itemize}

Eqs. \eqref{eq:v2_minus} and \eqref{eq:v2_zero} have a unique solution $|v|^2 > 0$ at any positive $\tilde{\mu}^2$ and any $\tilde{\sigma}^2$, because their left-hand sides monotonously increase from zero to infinity as $|v|^2$ grows from zero to infinity, while their right-hand sides are positive constants.  Eq. \eqref{eq:v2_minus} defines a family of ellipses centered at the origin of the $(\tilde{\sigma},\tilde{\mu})$-plane with the major semiaxis $|v|^{-1}$ in the direction of $\tilde{\sigma}$ and the minor semiaxis $(1+|v|^2)^{-1}|v|^{-1}$ in the direction of $\tilde{\sigma}$. 
Eq. \eqref{eq:v2_zero} also defines a family of ellipses centered at the origin of the $(\tilde{\sigma},\tilde{\mu})$-plane and semiaxes $|v|^{-1}$ and $|v|^{-3}$ in the directions of $\tilde{\sigma}$ and $\tilde{\mu}$, respectively. The roles of the major and minor semiaxes switch at $|v| = 1$.

The stability conditions are obtained by computing the Jacobians $J$ of ODEs \eqref{eq:vrvi_minus} and \eqref{eq:vrvi_zero} and requiring that $\det{J} > 0$ and ${\rm tr}\thinspace J < 0$. For ODEs \eqref{eq:vrvi_minus} and \eqref{eq:vrvi_zero}, these conditions, respectively, are:
\begin{align}
&\begin{cases}
     \det J = &\tilde{\mu}^2 (1 + |v|^2)(1 + 3|v|^2) + \tilde{\sigma}^2 > 0\\
    {\rm tr}\thinspace J  = &-2\tilde{\mu}(1 + 2|v|^2) <0 
\end{cases}, \label{eq:sys3_det_minus}\\
&\begin{cases}
     \det J = &3\tilde{\mu}^2|v|^4 + \tilde{\sigma}^2 > 0\\
    {\rm tr}\thinspace J  = &-4\tilde{\mu}|v|^2 <0 
\end{cases}. \label{eq:sys3_det_zero}
\end{align}
These stability conditions hold for all $\tilde{\mu} > 0$, $|v|^2 > 0$, and all $\tilde{\sigma}\in \mathbb{R}$. 
Phase diagrams in the $(\tilde{\sigma},\tilde{\mu})$-planes with the level sets of $|v|^2$ equilibria for the cases where $\mu+\varepsilon \neq 0$ and $\mu+\varepsilon = 0$ are shown in Fig. \ref{fig:system3_ellipses}.
\begin{figure}[htbp]
\centerline{\includegraphics[width=0.9\textwidth]{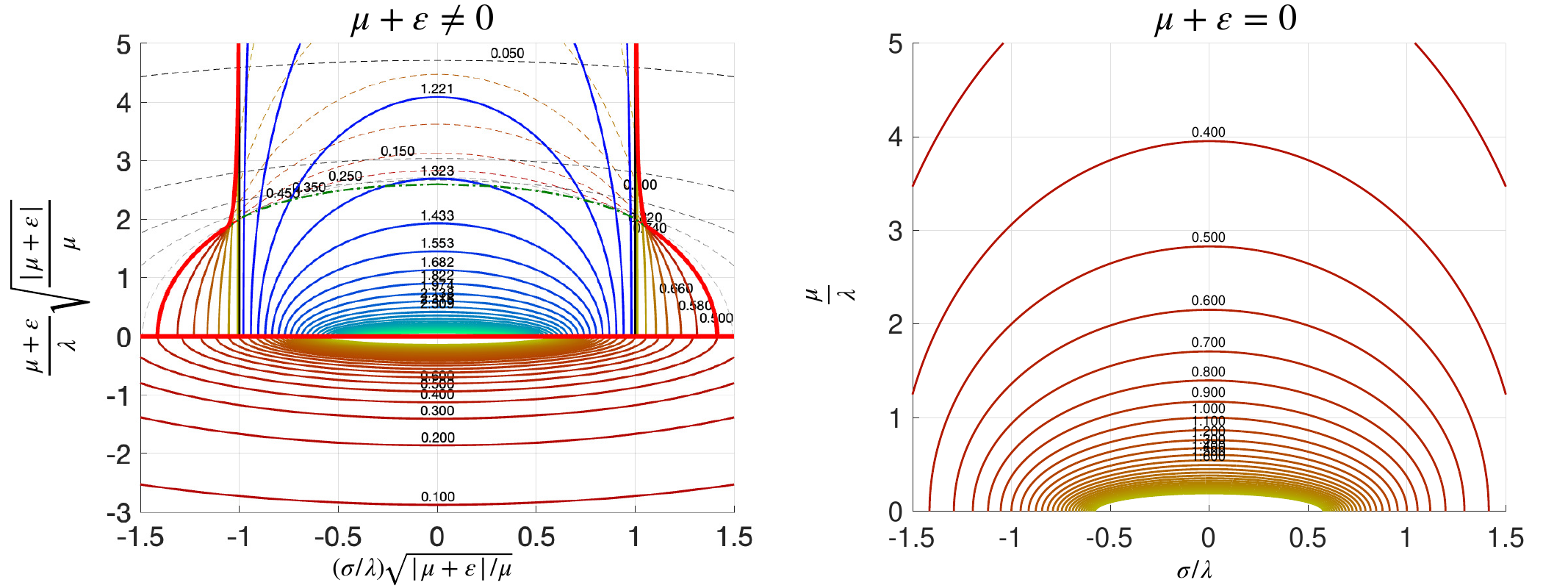}}
\caption{System \eqref{eq:system3}. Phase diagrams for reduced ODEs \eqref{eq:system3_v1} and \eqref{eq:system3_v2}, when $\mu+\varepsilon\neq 0$ (left), and \eqref{eq:system3_v0}, when $\mu+\varepsilon = 0$ (right). The solid and dashed curves are level sets of $|v|^2$ and asymptotically stable and unstable equilibria, respectively. }
\label{fig:system3_ellipses}
\vspace*{12pt}
\end{figure}

The dynamical properties of system \eqref{eq:system3} are summarized in the following proposition.
\begin{proposition}
    \label{prop2}
    Consider ODE \eqref{eq:system3} with $\mu > 0$ and $\lambda > 0$. Suppose cell $z_1$ is at its rotating attractor $z_1(t) = \sqrt{\mu}e^{i\omega t}$.
    \begin{enumerate}
        \item If $\mu+\varepsilon > 0$, then time rescaling, variable change, and parameter redefinition in Eq. \eqref{eq:redef_mu_sigma1} result in ODE \eqref{eq:system3_v1} in the co-rotating frame with angular velocity $\omega$. ODE \eqref{eq:system3_v1} coincides with ODE \eqref{eq:v_sys2}, and Proposition \ref{prop1} holds for it.
        
        \item If $\mu+\varepsilon < 0$, then time rescaling, variable change, and parameter redefinition in Eq. \eqref{eq:redef_mu_sigma2} result in ODE \eqref{eq:system3_v2} in the co-rotating frame with angular velocity $\omega$. At each $\tilde{\mu}>0$ and $\tilde{\sigma}\in\mathbb{R}$, ODE \eqref{eq:system3_v2} has a unique equilibrium solution. The level sets of these equilibria lie on ellipses given by Eq. \eqref{eq:v2_minus}. If $\tilde{\sigma} = 0$, the magnitude $|v|$ of this equilibrium blows us as $\tilde{\mu}^{-1/3}$ as $\tilde{\mu}\rightarrow 0$.
        
        \item If $\mu+\varepsilon = 0$, then time rescaling, variable change, and parameter redefinition in Eq. \eqref{eq:redef_mu_sigma0} result in ODE \eqref{eq:system3_v0} in the co-rotating frame with angular velocity $\omega$. This ODE has a unique equilibrium solution at any $\tilde{\mu}$ and $\tilde{\sigma}$. The level sets of these equilibria lie on ellipses given by Eq. \eqref{eq:v2_zero}. If $\tilde{\sigma} = 0$, the magnitude $|v|$ of this equilibrium blows us as $\tilde{\mu}^{-1/3}$ as $\tilde{\mu}\rightarrow 0$.       
    \end{enumerate}
\end{proposition}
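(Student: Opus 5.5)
We prove Proposition \ref{prop2} by reducing each case to an explicit computation in the co-rotating frame and reading off the equilibrium structure from a scalar equation for $|v|^2$. With $z_1=\sqrt{\mu}e^{i\omega t}$ and $z_2=ue^{i\omega t}$, substituting into the second equation of \eqref{eq:system3} and cancelling $e^{i\omega t}$ gives \eqref{eq:syste3_u}. The common step in all three cases is then the chain rule: put $u=cv$ and $\tau=\kappa t$, so that $\kappa c\,v' = (\mu+\varepsilon)cv - c^3|v|^2v + i\sigma c v - \lambda\sqrt{\mu}$. Dividing by $\lambda\sqrt{\mu}$ makes the free term $-1$.

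\textbf{Choice of constants.}
\begin{itemize}
\item Case $\mu+\varepsilon>0$: take $c=\sqrt{\mu+\varepsilon}$ and $\kappa=\lambda\sqrt{\mu}/c$. The linear and cubic coefficients both become $(\mu+\varepsilon)^{3/2}/(\lambda\sqrt{\mu})=\tilde{\mu}$, and the frequency coefficient becomes $\sigma c/(\lambda\sqrt{\mu})=\tilde{\sigma}$. This is exactly \eqref{eq:redef_mu_sigma1} and yields \eqref{eq:system3_v1}, which is identical to \eqref{eq:v_sys2}. Part 1 then follows by invoking Proposition \ref{prop1} verbatim.
\item Case $\mu+\varepsilon<0$: use $c=\sqrt{|\mu+\varepsilon|}$. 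The same algebra gives the linear term $-\tilde{\mu}v$ and the cubic term $-\tilde{\mu}|v|^2v$, hence \eqref{eq:system3_v2}.
\item Case $\mu+\varepsilon=0$: take $c=\sqrt{\mu}$ and $\kappa=\lambda$, which gives \eqref{eq:system3_v0}.
\end{itemize}

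\textbf{Equilibria in parts 2 and 3.} Write the equilibrium equation of \eqref{eq:system3_v2} as $v\,(-\tilde{\mu}(1+|v|^2)+i\tilde{\sigma})=1$. Taking moduli squared gives \eqref{eq:v2_minus}: $f(s):=s\bigl(\tilde{\mu}^2(1+s)^2+\tilde{\sigma}^2\bigr)=1$ with $s=|v|^2$.
\begin{itemize}
\item $f$ is continuous and strictly increasing on $[0,\infty)$, with $f(0)=0$ and $f\to\infty$, so a unique $s>0$ exists.
\item Given $s$, the number $a=-\tilde{\mu}(1+s)+i\tilde{\sigma}$ is nonzero, so $v=1/a$ is uniquely determined. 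Hence there is exactly one equilibrium.
\item For fixed $s$, the relation \eqref{eq:v2_minus} is quadratic in $(\tilde{\sigma},\tilde{\mu})$ with positive coefficients, so each level set is an ellipse centered at the origin.
\end{itemize}
The case $\mu+\varepsilon=0$ is identical, with $f(s)=\tilde{\mu}^2s^3+\tilde{\sigma}^2 s$ giving \eqref{eq:v2_zero}.

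\textbf{Stability.} Stability is not required by the statement, but it follows from \eqref{eq:sys3_det_minus} and \eqref{eq:sys3_det_zero}. We would verify these by differentiating: the Jacobian entries are $-\tilde{\mu}(1+s)-2\tilde{\mu}v_R^2$ and so on. The trace is $-2\tilde{\mu}(1+2s)$, and the determinant reduces, using $v_R^2+v_I^2=s$, to the stated positive expression.

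\textbf{Blow-up rate at $\tilde{\sigma}=0$.} The equation becomes $\tilde{\mu}^2 s(1+s)^2=1$ (respectively $\tilde{\mu}^2s^3=1$). As $\tilde{\mu}\to0$ we must have $s\to\infty$, since $f$ is bounded on bounded sets. Then $s^3(1+1/s)^2=\tilde{\mu}^{-2}$ gives $s\sim\tilde{\mu}^{-2/3}$, i.e.\ $|v|\sim\tilde{\mu}^{-1/3}$. In the $\mu+\varepsilon=0$ case this holds exactly: $|v|=\tilde{\mu}^{-1/3}$.

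The only step needing care is bookkeeping the rescalings so that the coefficients match $\tilde{\mu}$ and $\tilde{\sigma}$ in \eqref{eq:redef_mu_sigma2}. Everything else is monotonicity of a one-variable function. No genuine obstacle is expected, since the hard analysis of case 1 is delegated to Proposition \ref{prop1}.
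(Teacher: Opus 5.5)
Your proposal is correct and follows essentially the same route as the paper: the co-rotating reduction with the rescalings \eqref{eq:redef_mu_sigma1}, \eqref{eq:redef_mu_sigma2}, \eqref{eq:redef_mu_sigma0}, and squaring the equilibrium equation to get a scalar equation in $|v|^2$ whose left-hand side increases strictly from $0$ to $\infty$. It then reads off the ellipse level sets and uses the dominant-term asymptotics at $\tilde{\sigma}=0$, as in Statement 2 of Proposition \ref{prop1}. Writing the equilibrium as $v=1/a$ with $a=-\tilde{\mu}(1+|v|^2)+i\tilde{\sigma}\neq 0$ just makes explicit the recovery of a unique $v$ from $|v|^2$, which the paper leaves implicit.
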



\subsubsection{Bifurcation diagrams of the original system} 
\label{sec:system3_paths}
The system reduction applied in Section \ref{sec:system3_reduced} greatly simplified its analysis and reduced the hardest case with multiple periodic solutions of system \eqref{eq:system3} to the reduced system \eqref{eq:system2}. However, the parameter redefinition in Eq. \eqref{eq:redef_mu_sigma1} obscures how attractors of system \eqref{eq:system3} change as we fix all parameters but one, and plot a bifurcation diagram with respect to the unfixed parameter. The simplest case is when the unfixed parameter is $\lambda$. Since both $\tilde{\sigma}$ and $\tilde{\mu}$ are inversely proportional to $\lambda$, changing $\lambda$ shrinks or expands the phase diagram without changing its structure. Therefore, we will keep $\lambda = 1$ throughout this section. The second simplest unfixed parameter is $\sigma$. Since $\tilde{\mu}$ is independent of $\sigma$, and $\tilde{\sigma}$ is directly proportional to $\sigma$, the paths in the phase plane $(\tilde{\sigma},\tilde{\mu})$ for these case are horizontal lines. Not-so-straightforward cases involve the unfixed parameters $\mu$ and $\varepsilon$. The corresponding paths are shown in Fig. \ref{fig:system3_paths}, left and right, respectively. For simplicity, we will refer to these paths as $\mu$- and $\varepsilon$-paths, respectively.
\begin{figure}[htbp]
\centerline{
\includegraphics[width=0.9\textwidth]
{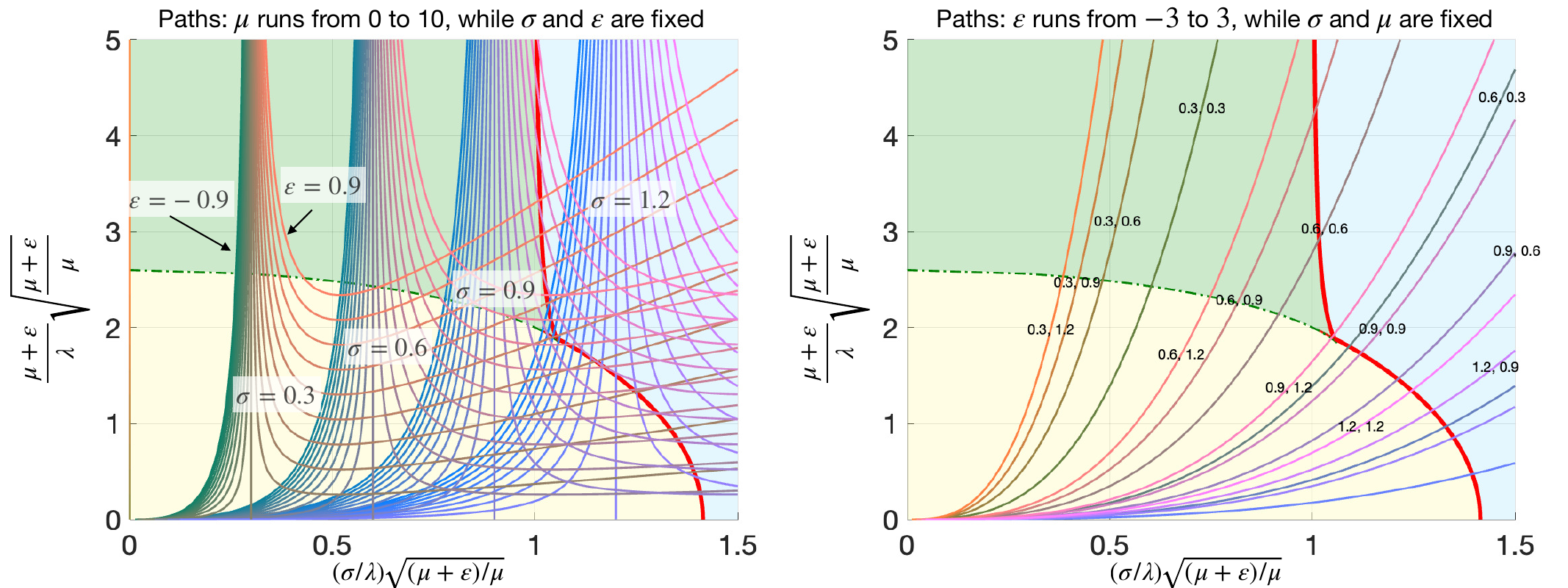}
}
\caption{Left: Paths corresponding to changing $\mu$ at fixed $\sigma$ and $\varepsilon$ on the $(\tilde{\sigma},\tilde{\mu})$-phase diagram. Paths at each fixed $\sigma$ and a set of $\varepsilon$ values $\{-0.9,-0.8,\ldots,0.9\}$ tend to the vertical asymptote $\tilde{\sigma} = \sigma$. The paths with $\varepsilon < 0$ tend to zero as $\tilde{\mu}\rightarrow 0$, while paths with $\varepsilon > 0$ are bounded away from zero.
Right: Paths corresponding to changing $\varepsilon$ at fixed $\sigma$ and $\mu$ on the $(\tilde{\sigma},\tilde{\mu})$-phase diagram. The curves are annotated with the corresponding values of  $\sigma$ and $\mu$, where the first value is $\sigma$, and the second one is $\mu$.  
}
\label{fig:system3_paths}
\vspace*{12pt}
\end{figure}

The $\varepsilon$-paths start in the yellow region where a unique asymptotically stable periodic solution of system \eqref{eq:system3} exists and eventually enter the region with only a torus attractor. In between, they may traverse the green region with two additional unstable periodic solutions, or the tiny pink region visible in Fig. \ref{fig:system2_phase_zoomin} with two asymptotically stable and one unstable periodic solutions of system \eqref{eq:system3}. 

The $\mu$-paths may have more complex bifurcation diagrams because they may enter the region with an asymptotically stable periodic solution of system \eqref{eq:system3} multiple times. An example of such a $\mu$-path at $\sigma = 0.98$ and $\varepsilon = 0.2$ and the resulting bifurcation diagram generated with the aid of the continuation software package XPP AUT~\cite{XPPAUT} is displayed in Fig. \ref{fig:Samir_Masha}. 
The top-left panel shows the norm of the real part of the vector $[z_1,z_2]$  as a function of the excitation parameter, $\mu$. The bottom-right panel displays the corresponding $\mu$-path in the $(\tilde{\sigma},\tilde{\mu})$-plane. 

\begin{figure}[htbp]
\centerline{
\includegraphics[width=0.9\textwidth]
{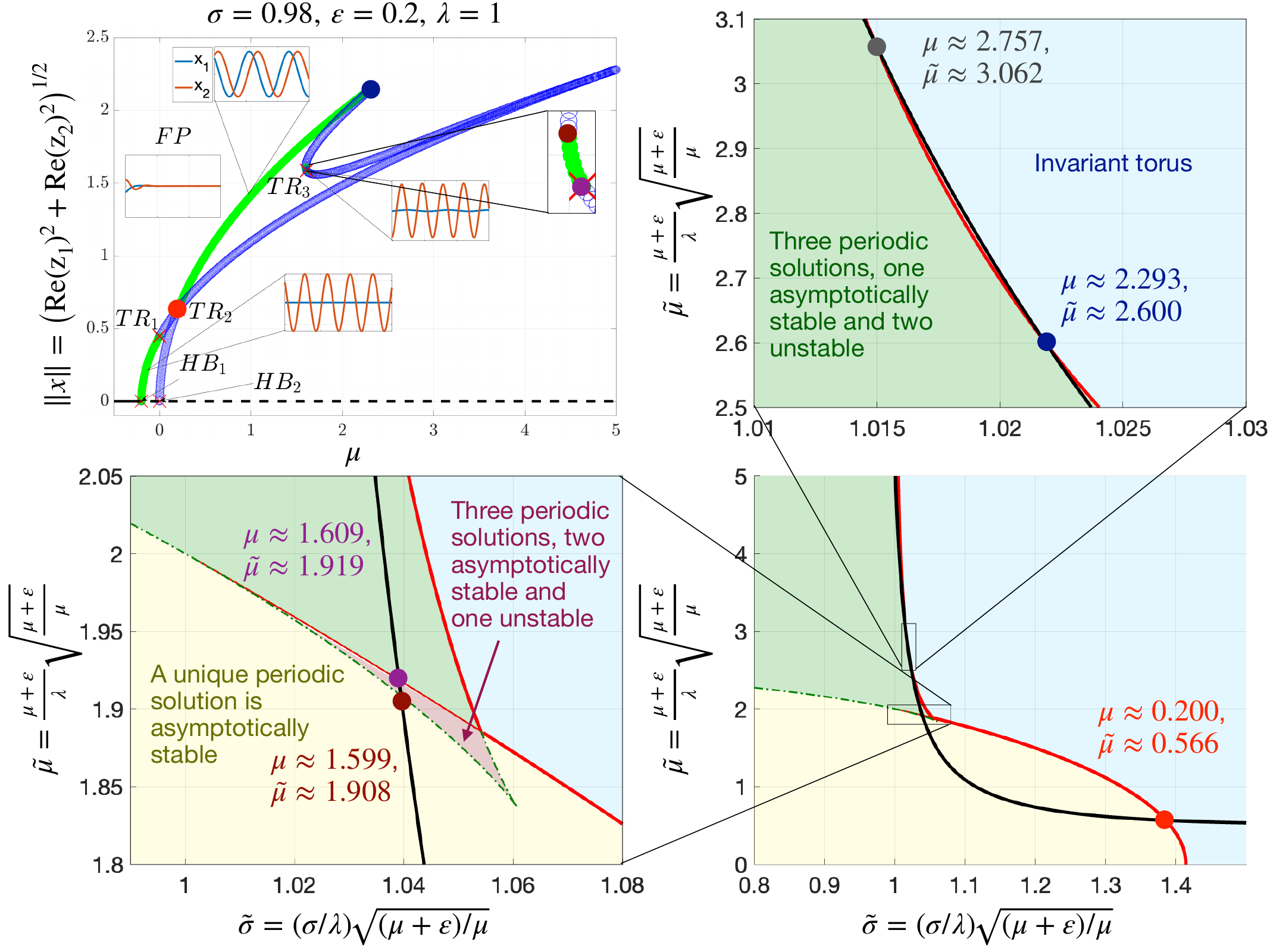}
}
\caption{A bifurcation diagram  (top left) of the response of Eq.~\eqref{eq:system3} as a function of the excitation parameter $\mu$. All other parameter values are shown in the figure. Green (blue) circles depict stable (unstable) oscillations that arise via Hopf bifurcations, labeled $HB_1$ and $HB_2$. Tori bifurcations, which correspond to secondary Hopf bifurcations, are labeled $TR_1$ and $TR_2$. They lead to quasi-periodic motion. The corresponding $\mu$-path is shown in the bottom right, and two zoom-ins of its traverse through the pink region with two asymptotically stable periodic solutions and its re-entry of the green region are shown in the zoom-ins. As $\mu$ increases, starting with negative values, a Hopf bifurcation, labeled $HB_1$, occurs at $\mu = -\varepsilon = -0.2$. This bifurcation brings cell two to oscillate at frequency $\omega+\sigma$, while cell one is at rest, and to follow the green branch in the interval $-0.2<\mu<0$. Another Hopf bifurcation, $HB_2$, at $\mu = 0$ gives rise to limit-cycle oscillations of cell one at frequency $\omega$. Since cell two is already oscillating at a different frequency, the combined effect is a torus bifurcation, $TR_1$, leading to quasi-periodic oscillations. At $\mu > 0$, the events on the $(\mu,\|x\|)$-bifurcation diagram on the top-left panel can be understood by following the corresponding $\mu$-path on the $(\tilde{\mu},\tilde{\sigma})$ phase diagram on the bottom-right panel. The $\mu$-path enters the yellow region at the point marked by a bright red dot and labeled by $TR_2$ in the $(\mu,\|x\|)$-diagram. The torus attractor disappears at this point, giving rise to a periodic attractor represented by the green branch starting at the bright red dot and ending at the dark blue dot in the top-left panel. The $\mu$-path traverses the pink region zoomed in on the bottom-left panel, where two periodic attractors exist. The second periodic solution with smaller amplitude in the second cell lies on the slanted-j-shaped branch in the top-left panel, with a tiny stable interval zoomed in on in the inset. The $\mu$-path enters the green region in the bottom-right panel, then leaves it, and re-enters it again. This excursion to the blue region is zoomed in on the top-right panel. The green branch at the top-left ends at the exit point: XPP AUT has not found its continuation. The traverse of the $\mu$-path through the pink and green regions is marked by the existence of three periodic phase-locked solutions lying on the green and the slanted-j-shaped branches in which both cells oscillate at frequency $\omega$. The other blue branch emanating from the point $TR_2$ corresponds to the unstable periodic solution where cell one is at rest and cell two oscillates at frequency $\omega + \sigma$.
}
\label{fig:Samir_Masha}
\vspace*{12pt}
\end{figure}
\medskip

\subsection{The effect of the cubic imaginary term}
\label{sec:system4}

\begin{figure}[H]
\begin{minipage}{.4\textwidth}
  \centering
\begin{tikzpicture}[->,>=stealth',shorten >=1pt,auto,node distance=2.4cm,
  thick,node1/.style={circle,fill=violet!30,draw,font=\sffamily\Large\bfseries},node2/.style={circle,fill=green!30,draw,font=\sffamily\Large\bfseries}]

  \node[node1] (1) {1};
  \node[node2] (2) [right of =1] {2};

  \path[every node/.style={font=\sffamily\Large}]
   (1) edge node {$\lambda$} (2);
   
\end{tikzpicture}
\end{minipage}%
\begin{minipage}{.6\textwidth}
 \centering
\begin{equation} \label{eq:system4}
\begin{aligned}
\dot{z}_1 & = (\mu +i\omega)z_1 - |z_1|^2 z_1\\
\dot{z}_2 & = ((\mu + \varepsilon)  + i(\omega +\sigma))z_2 - (1 + \gamma \, i) |z_2|^2z_2  - \lambda z_1. \end{aligned}
\end{equation}
\end{minipage}
\caption{A schematic diagram of a two-cell feed-forward network with Stuart-Landau cells with inhomogeneities in the natural frequency, the excitation parameter, and the cubic nonlinearity, represented by $\sigma$, $\varepsilon$, and $\gamma$, respectively.}
\label{fig:FFwd_Disorder_mu}
\end{figure}
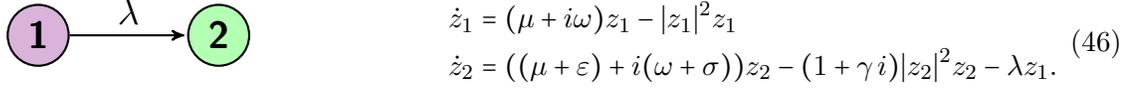

In this section, we consider the case where the complex cubic nonlinearity coefficient, $\gamma$, is nonzero. Figure~\ref{fig:FFwd_Disorder_mu} illustrates this case. A near-identity transformation of the form $z_2 = \tilde{z}_2\left(1+i\tfrac{\gamma}{\mu+\varepsilon}|\tilde{z}_2|^2\right) $ eliminates the complex coefficient $\gamma$ in the cubic nonlinearity, but introduces an $O(|z_2|^5)$ term. If we were only interested in bifurcations when $\mu$ changes from a negative to a small positive value, these higher-order terms would have an insignificant effect; they are not included in the normal form. By contrast, we aim to investigate the dynamics of system \eqref{eq:system3} across a broad range of parameter values, thereby making the effect of these higher-order terms substantial. Therefore, we retain the $\gamma$-term and examine system \eqref{eq:system3} using the approach employed for $\gamma = 0$.

We assume that the first cell in Eq. \eqref{eq:system4} is settled onto its periodic attractor $z_1(t) = \sqrt{\mu}e^{i\omega t}$. Then the ODE for the second cell in Eq. \eqref{eq:system4} is

\begin{equation}
\label{eq:gamma_z_2}
\dot{z}_2 = (\mu + \varepsilon +i(\omega + \sigma))z_2 - (1+i\gamma)|z_2|^2 z_2 -\lambda\sqrt{\mu}e^{i\omega t}.
    \end{equation}
The motion of the second cell in the co-rotating frame is represented by a new variable $u$ that relates to $z_2$ via $z_2 = ue^{i\omega t}$. The ODE for $u$ is:
\begin{equation}
\label{eq:gamma_u}
    \dot{u} = (\mu + \varepsilon +i\sigma) u - (1+i\gamma)|u|^2u -\lambda\sqrt{\mu}.
\end{equation}

\subsubsection{The reduced system analysis}
Let $\mu > 0$. We consider the case $\mu + \varepsilon > 0$. The case $\mu + \epsilon \le 0$ can be analyzed using the same variable changes as those used for this case when $\gamma = 0$.  
\medskip

The time rescaling, the variable change, and parameter redefinition given by
\begin{equation}
    \label{eq:gamma_redef}
    \tau = \lambda\sqrt{\frac{\mu}{\mu+\varepsilon}},\quad u = v\sqrt{\mu+\varepsilon} ,\quad \tilde{\mu} = \frac{\mu+\varepsilon}{\lambda}\sqrt{\frac{\mu+\varepsilon}{\mu}},\quad\tilde{\sigma} = \frac{\sigma}{\lambda}\sqrt{\frac{\mu+\varepsilon}{\mu}},
\end{equation}
result in the following ODE for the new variable $v$:
\begin{equation}
    \label{eq:gamma_v}
    \dot{v} = \tilde{\mu}\left(1-|v|^2\right)v +i\left(\tilde{\sigma}-\tilde{\mu}\gamma|v|^2\right)v - 1.
\end{equation}
The corresponding system of ODEs for the real and imaginary parts of $v$ is:
\begin{equation}
    \label{eq:gamma_vrvi}
    \begin{aligned}
        v_R& = \tilde{\mu}\left(1-|v|^2\right)v_R -\left(\tilde{\sigma}-\tilde{\mu}\gamma|v|^2\right)v_I - 1,\\
        v_I& = \tilde{\mu}\left(1-|v|^2\right)v_I +\left(\tilde{\sigma}-\tilde{\mu}\gamma|v|^2\right)v_R.
    \end{aligned}
\end{equation}
The equilibria of Eq. \eqref{eq:gamma_vrvi} are the solutions of the following equation for $|v|^2$:
\begin{equation}
    \label{eq:gamma_v2}
    \tilde{\mu}^2\left(1-|v|^2\right)^2|v|^2 +\left(\tilde{\sigma} -\tilde{\mu}\gamma|v|^2\right)^2|v|^2 = 1.
\end{equation}
At any fixed $\tilde{\mu}>0$, $\tilde{\sigma}\in\mathbb{R}$, and $\gamma\in\mathbb{R}$, Eq. \eqref{eq:gamma_v2} has at least one solution $|v|^2$, and each solution of Eq. \eqref{eq:gamma_v2} uniquely determines $v_R$ and $v_I$ from Eq. \eqref{eq:gamma_vrvi}.

Eq. \eqref{eq:gamma_v2} defines a family of slanted ellipses in the $(\tilde{\sigma},\tilde{\mu})$-plane at any fixed $\gamma$. The semiaxes directions and magnitudes are, respectively, the eigenvectors and the reciprocals of the square roots of the eigenvalues of the matrix
\begin{equation}
    \label{eq:gamma_matrix}
    M\left(|v|^2,\gamma\right) = \left[\begin{array}{ccc}
    |v|^2 &\quad& -\gamma |v|^4\\-\gamma |v|^4 &\quad& \left(1-|v|^2\right)^2|v|^2 +\gamma|v|^6
    \end{array}\right].
\end{equation}
As $|v|^2\rightarrow\pm 1$, the ellipses degenerate into two parallel lines $\tilde{\sigma} - \tilde{\mu}\gamma = \pm 1$ in the $(\tilde{\sigma},\tilde{\mu})$-plane. These lines and the family of ellipses are plotted in Fig. \ref{fig:system3_phase_gamma1}.

\medskip
The stability of equilibria of ODE \eqref{eq:gamma_vrvi} is defined by the trace and the determinant of its Jacobian matrix
\begin{equation}
    \label{eq:gamma_J}
    J = \left[\begin{array}{ccc}
    \tilde{\mu}\left(1-|v|^2\right) -2\tilde{\mu}\left(v_R^2-\gamma v_Rv_I\right) &~&
    -\left(\tilde{\sigma} -\tilde{\mu}\gamma|v|^2\right)-2\tilde{\mu}\left(v_Rv_I-\gamma v_I^2\right) \\
    \left(\tilde{\sigma} -\tilde{\mu}\gamma|v|^2\right)-2\tilde{\mu}\left(v_Rv_I+\gamma v_R^2\right)&~&
    \tilde{\mu}\left(1-|v|^2\right) -2\tilde{\mu}\left(v_I^2+\gamma v_Rv_I\right)
  \end{array}\right]:     
\end{equation}
\begin{align}
    \det J &= \tilde{\mu}^2\left(1-|v|^2\right)\left(1-3|v|^2\right) +\left(\tilde{\sigma} -\tilde{\mu}\gamma|v|^2\right)\left(\tilde{\sigma} -3\tilde{\mu}\gamma|v|^2\right),\label{eq:gamma_det}\\
    {\rm tr}\thinspace J & = 2\tilde{\mu}\left(1 - 2|v|^2\right). \label{eq:gamma_trace}
\end{align}
The condition ${\rm tr}\thinspace J = 0$ defines the ellipse plotted in red in Fig. \ref{fig:system3_phase_gamma1}:
\begin{equation}
    \label{eq:ellipse_tr0_gamma1}
    \frac{\tilde{\mu}^2}{8}(1 + \gamma) - \frac{\gamma}{2}\tilde{\sigma}\tilde{\mu} +\frac{\tilde{\sigma}^2}{2} = 1.
\end{equation}
The condition $\det J = 0$ together with Eq. \eqref{eq:gamma_v2} defines the curve plotted in Fig. \ref{fig:system3_phase_gamma1} in magenta. This curve also bounds the region where Eq. \eqref{eq:gamma_vrvi} has three equilibria. The singular points, the cusps, of the $\det J = 0$ curve will be found using the singularity theory approach in the next section.

\begin{figure}[htbp]
\centerline{\includegraphics[width=0.9\textwidth]{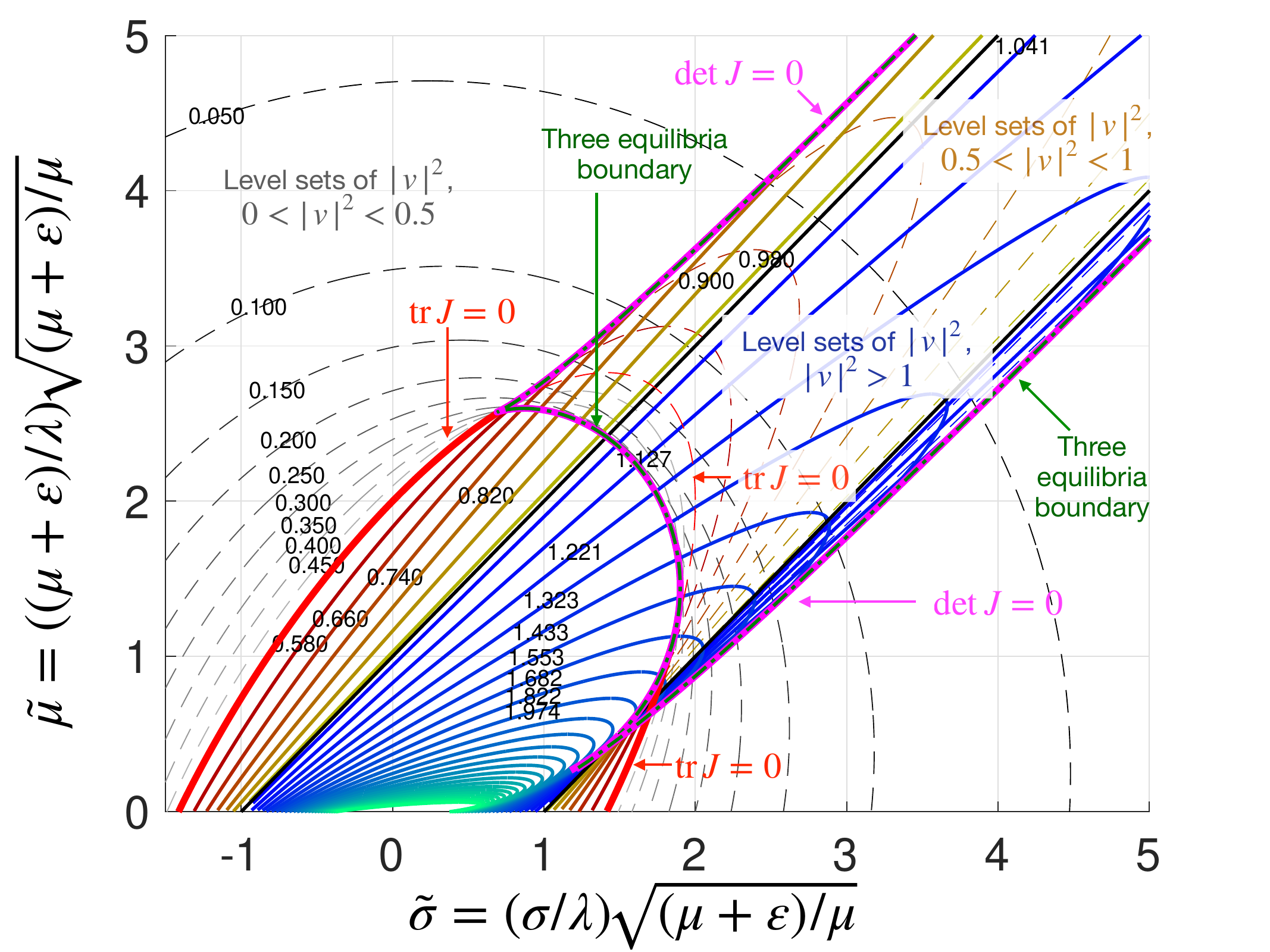}}
 \caption{The phase diagram for system \eqref{eq:system3} at the cubic nonlinearity parameter $\gamma = 1$.
The thick red and magenta curves bound the region spanned by solid ellipses where an asymptotically stable periodic solution to ODE \eqref{eq:system4} exists. The medium-weight solid curves of blue and green shades depict the level sets of $|v|^2 > 1$ corresponding to asymptotically stable periodic solutions. The black slanted lines  $\tilde{\sigma} - \tilde{\mu} = \pm 1$ correspond to stable periodic solutions with $|v|^2 = 1$. The medium-weight solid and thin dashed curves of dark red and dark yellow shades correspond, respectively, to asymptotically stable and unstable periodic solutions with $\tfrac{1}{2} <|v|^2 < 1$. The dashed curves of grey tones depict ellipses with $0 < |v|^2 < \tfrac{1}{2}$.  The black numbers next to the curves indicate the corresponding values of $|v|^2$. 
}
\label{fig:system3_phase_gamma1}
\vspace*{12pt}
\end{figure}

\subsection{Singularity theory approach}
\label{sec:system4_singularity}
In this section, we apply the singularity theory approach~\cite{GS1} to analyze the solutions to the equation
\begin{equation}
    \label{eq:gamma_u2}
    \left(\mu +\varepsilon-|u|^2\right)^2|u|^2 +\left(\sigma - \gamma |u|^2\right)^2|u|^2 = \lambda^2\mu
\end{equation}
for the squared amplitude, $|u|^2$, of the second cell at a periodic solution to system \eqref{eq:system4}. Eq. \eqref{eq:gamma_u2} is obtained from Eq. \eqref{eq:gamma_u} by splitting it into a system of ODEs for the real and complex parts of $u$, $u = u_R + iu_I$, setting their right-hand sides to zero, and taking $\lambda\sqrt{\mu}$ to the left-hand side and squaring. Expanding Eq. \eqref{eq:gamma_u2} and grouping terms according to the powers of $x: = |u|^2$, we obtain the following parametric cubic equation
\begin{equation}
    \label{eq:system4_prelim}
     G(x,\mu,\sigma,\varepsilon,\lambda,\gamma): = x^3(1 + \gamma^2) - 2x^2(\mu +\varepsilon + \sigma\gamma) + x((\mu +\varepsilon)^2 + \sigma^2) - \lambda^2\mu = 0.
\end{equation}
\medskip

The singularity theory approach \cite{GS1} consists of several steps. The first step is the so-called \emph{recognition problem}, whose task is to select one parameter to be the bifurcation parameter, set the remaining parameters to zero, and identify which parametric family, i.e., which normal form, the resulting equation belongs to. The second step is to check whether the original equation, with all parameters nonzero, is a \emph{versal unfolding} of the normal form identified in the first step. The adjective ``versal" means that the parametric family contains all possible one-parameter perturbations of the normal form, thereby encompassing all perturbed bifurcation diagrams. The versal unfolding is universal if it requires the fewest additional parameters beyond the chosen bifurcation parameter to achieve it. In other words, if the number of the additional parameters is equal to the codimension of the tangent space of the normal form. 
The third step is to identify the loci of singularities, i.e., parameter values, at which the bifurcation diagram undergoes a qualitative change. 

Eq. \eqref{eq:system4_prelim} is cubic in $x$. Cubic equations admit two types of singularities, \emph{hysteresis} and \emph{bifurcation}, illustrated in Fig. \ref{fig:HBschematic}.  The loci of singularities partition the parameter space into regions in each of which the bifurcation diagram remains qualitatively the same. Therefore, it suffices to plot one representative bifurcation diagram for each region of the unfolding parameter space, partitioned by the singularity hypersurfaces. 
\begin{figure}[htbp]
\centerline{
(a) \includegraphics[width=0.4\textwidth]{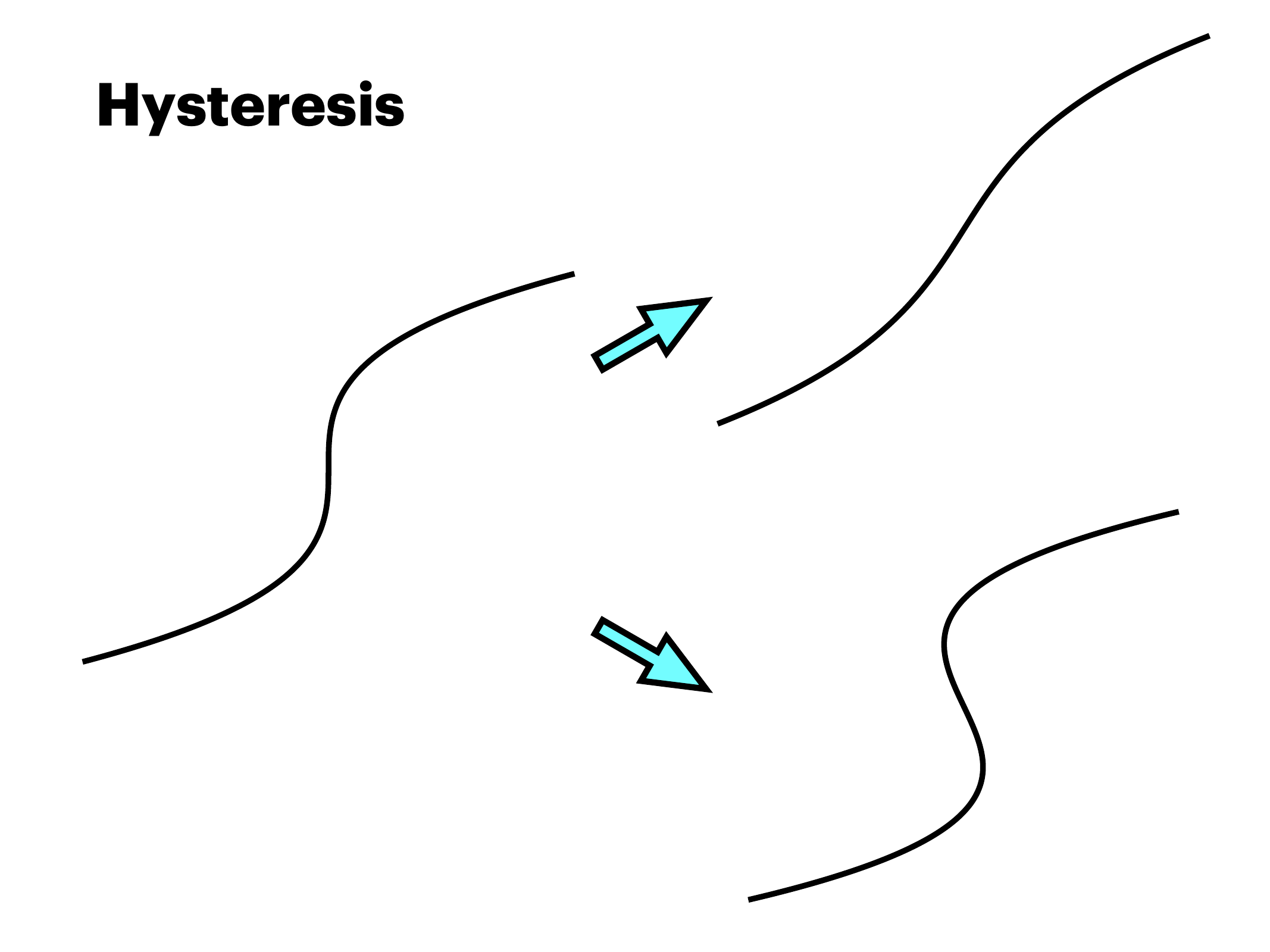}
(b) \includegraphics[width=0.4\textwidth]{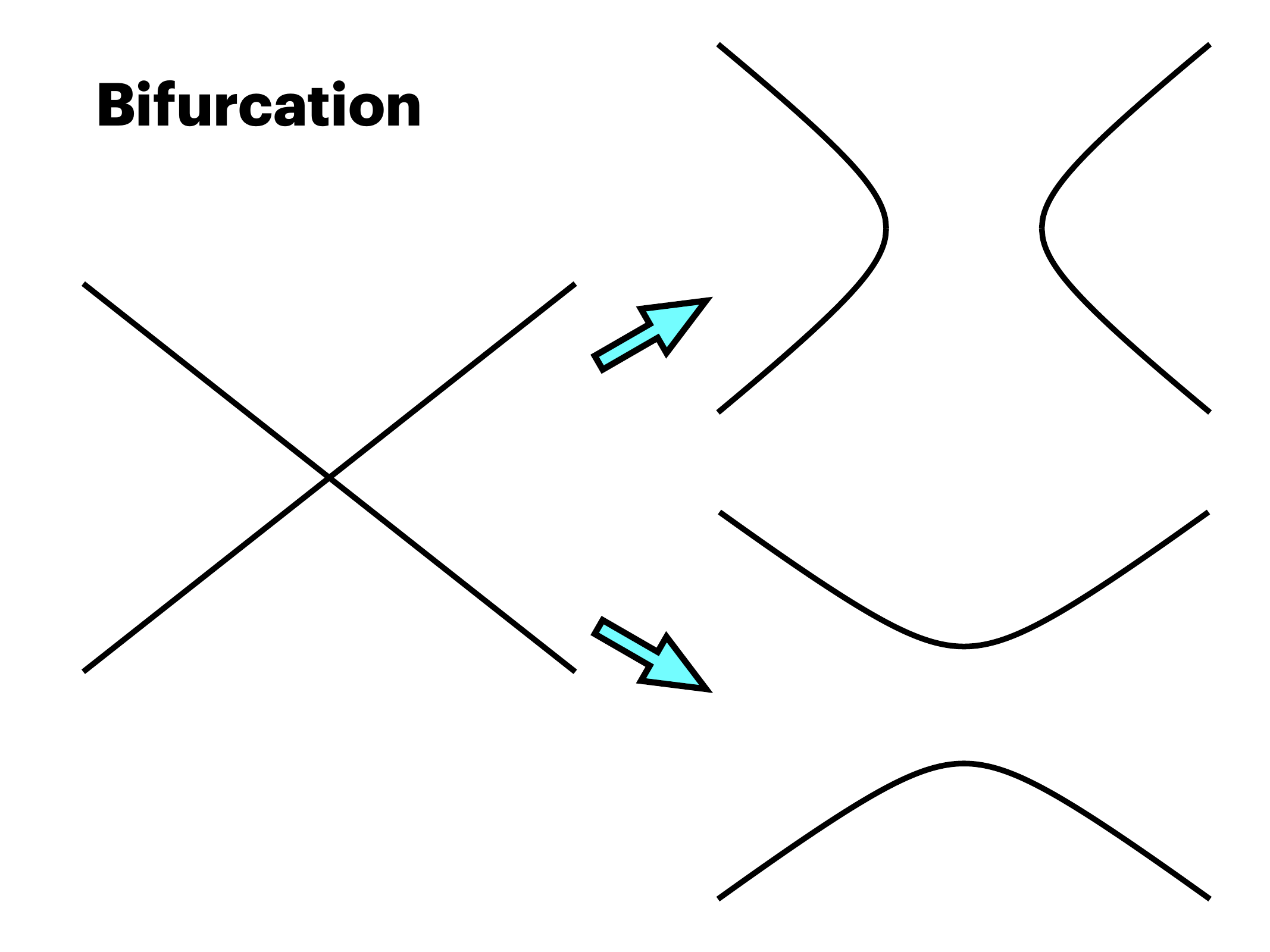}
}
 \caption{Nonpersistent diagrams of hysteresis and bifurcation types. (a): A bifurcation diagram of hysteresis type on the left transforms into a branch with a unique solution on the top right, or with a three-solution interval on the bottom right, as a result of an arbitrarily small perturbation. (b): A bifurcation diagram of bifurcation type on the left transforms into one of the diagrams on the right, as a result of an arbitrarily small perturbation. 
}
\label{fig:HBschematic}
\vspace*{12pt}
\end{figure}

\medskip
Examining Eq. \eqref{eq:system4_prelim}, we choose $\sigma$ as a bifurcation parameter and observe that it can be viewed as an unfolding of the following $\mathbb{Z}_2$-symmetric normal form (see Table 5.1 on page 263 in Section VI.5 in \cite{GS1}):
\begin{equation}
    \label{eq:Z2form}
    g(x,\sigma): = x^3 + \sigma^2x.
\end{equation}
 Its $\mathbb{Z}_2$-codimension is 1, while its full co-dimension is 5. Eq. \eqref{eq:system4_prelim} has four additional parameters, $\mu$, $\varepsilon$, $\lambda$, and $\gamma$. The parameter $\mu$ is the excitation parameter of the first cell in Eq. \eqref{eq:system4}. It is positive by assumption. It can be eliminated by redefining $\lambda$ as $\lambda\sqrt{\mu}$ and $\varepsilon$ as $\mu+\varepsilon$. We choose not to do it. Instead, we fix its value and treat it as a constant.
 
Thus, Eq. \eqref{eq:system4_prelim} can be viewed as a three-parameter unfolding of Eq. \eqref{eq:Z2form}. It is neither a $\mathbb{Z}_2$-unfolding, nor the full universal unfolding of $g(x,\sigma)$. Next, we proceed to finding the loci of nonpersistent phenomena: hysteresis, $\mathcal{H}$, and bifurcation, $\mathcal{B}$, singularities. The conditions for these singularities are specified in Definition 5.1 in Section III.5 in \cite{GS1}.

\medskip
The conditions for the hysteresis singularity are $G = G_x = G_{xx} = 0$. This gives the following system:
\begin{subequations} \label{eq:G_per_sol}
     \begin{align}
	& G = x^3(1+\gamma^2) - 2(\mu + \varepsilon+\sigma\gamma) x^2 + [(\mu + \varepsilon)^2 + \sigma^2]x - \lambda^2 \mu = 0  \label{eq:G_per} \\[3pt]
	& G_x = 3 x^2 (1+\gamma^2)- 4(\mu + \varepsilon+\sigma\gamma)x + (\mu + \varepsilon)^2 + \sigma^2 = 0  \label{eq:G_x} \\[3pt]
	& G_{xx} = 6x(1+\gamma^2) - 4(\mu + \varepsilon+\sigma\gamma) = 0.  \label{eq:G_xx}
    \end{align}
\end{subequations}
System \eqref{eq:G_per}, \eqref{eq:G_x}, \eqref{eq:G_xx} can be solved for $\varepsilon$, $\sigma$, and $|u|^2\equiv x$ in terms of $\lambda$, $\mu$, and $\gamma$, defining the hysteresis set as (see Appendix \ref{appC}):
\begin{equation}
    \label{eq:sys4_H_solution}
   \mathcal{H}=\left\{ \varepsilon = \frac{3}{2}\frac{1\pm\tfrac{1}{\sqrt{3}}\gamma}{(1+\gamma^2)^{1/3}}\lambda^{2/3}\mu^{1/3} - \mu,\quad \sigma = \frac{3}{2}\frac{\gamma\mp\tfrac{1}{\sqrt{3}}}{(1+\gamma^2)^{1/3}}\lambda^{2/3}\mu^{1/3},\quad
    |u|^2 = \frac{\lambda^{2/3}\mu^{1/3}}{(1+\gamma^2)^{1/3}}\right\}.
\end{equation}

\medskip

The bifurcation conditions are $G = G_x = G_{\sigma} = 0$, i.e.,
\begin{subequations} 
\label{eq:G_per_sol_bifur}
     \begin{align}
	& G = x^3(1+\gamma^2) - 2(\mu + \varepsilon+\sigma\gamma) x^2 + [(\mu + \varepsilon)^2 + \sigma^2]x - \lambda^2 \mu = 0  \label{eq:G_bifur} \\[3pt]
	& G_x = 3 x^2 (1+\gamma^2)- 4(\mu + \varepsilon+\sigma\gamma)x + (\mu + \varepsilon)^2 + \sigma^2 = 0  \label{eq:G_x_bifur} \\[3pt]
	& G_{\sigma} = -2\gamma x^2 + 2\sigma x = 0.  \label{eq:G_sig}
    \end{align}
\end{subequations}
From Eq. \eqref{eq:G_sig}, we find $\gamma = \sigma/x$. Plugging it into Eq. \eqref{eq:G_x_bifur} yields
\begin{equation}
    \label{eq:Bif2}
    (x - (\mu +\varepsilon))(3x-(\mu+\varepsilon)) = 0,\quad {\rm i.e.}\quad x = \mu + \varepsilon\quad{\rm or}\quad 3x = \mu + \varepsilon.
\end{equation}
The conditions $x = \mu + \varepsilon$, $\gamma = \sigma/x$ and Eq. \eqref{eq:G_bifur} yield $\lambda^2\mu = 0$, which implies $\lambda = 0$, since we have assumed that $\mu > 0$. The conditions $3x = \mu+\varepsilon$, $\gamma = \sigma/x$ and Eq. \eqref{eq:G_bifur} yield $ 4x^3 = \lambda^2\mu$.
Therefore, the bifurcation set is
\begin{equation}
\label{eq:Bif3}
\mathcal{B} =
\left[
\begin{aligned}
\lambda &= 0, 
&\sigma &= \gamma(\mu + \varepsilon),
&|u|^2 &= \mu + \varepsilon, \\[4pt]
\frac{4}{27}(\mu + \varepsilon)^3 &= \lambda^2 \mu,
&\sigma &= \gamma \frac{\mu + \varepsilon}{3},
&|u|^2 &= \frac{\mu + \varepsilon}{3}.
\end{aligned}
\right.
\end{equation}

\medskip
Eqs. \eqref{eq:sys4_H_solution} and \eqref{eq:Bif3} defining the hysteresis, $\mathcal{H}$, and bifurcation, $\mathcal{B}$, sets  allow us to plot slices of $\mathcal{H}$ and $\mathcal{B}$ in the $(\varepsilon,\lambda)$-plane at any given $\gamma$. 
Fig. \ref{fig:system4_BH} displays these slices at $\mu = 0.2$ and $\gamma \in\{0,\tfrac{1}{3},1.5,10\}$. 

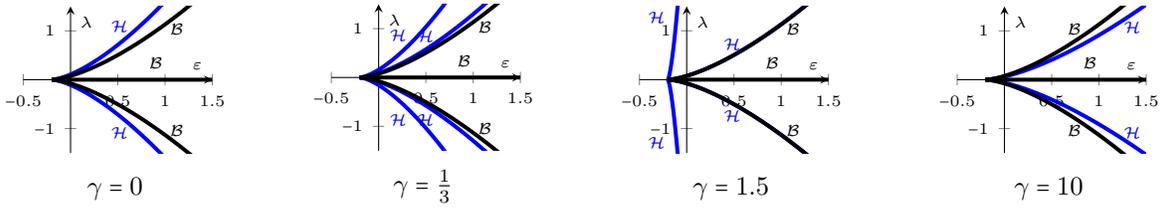
\begin{figure}[h]
\centering
\begin{minipage}{.248\textwidth}
  \centering
  \begin{tikzpicture}
  \begin{axis}[
      width=\textwidth,
      xmin=-0.5, xmax=1.5,
      ymin=-1.5, ymax=1.5,
      axis lines=middle,
      xlabel={$ \varepsilon $},
      ylabel={$ \lambda $},
      domain=-0.2:1.5,
      restrict x to domain=-0.2:1.5,
      legend style={draw=none,font=\tiny},
      every axis/.append style={font=\tiny}
  ]

  \addplot[line width=1.5pt, blue, samples=200]
    {(0.2+x)^(3/2)*(0.2)^(-1/2)*(2/3)^(3/2)}
    node[pos=0.45, left, font=\tiny] {$\mathcal{H}$};
  \addplot[line width=1.5pt, blue, samples=200]
    {-(0.2+x)^(3/2)*(0.2)^(-1/2)*(2/3)^(3/2)}
    node[pos=0.45, left, font=\tiny] {$\mathcal{H}$};

  \addplot[line width=1.5pt, black, samples=2] {0}node[pos=0.65, above, font=\tiny] {$\mathcal{B}$};

  \addplot[line width=1.5pt, black, samples=200]
    {sqrt((x+0.2)^3/1.35)}
    node[pos=0.60, right, font=\tiny] {$\mathcal{B}$};
  \addplot[line width=1.5pt, black, samples=200]
    {-sqrt((x+0.2)^3/1.35)}
    node[pos=0.60, right, font=\tiny] {$\mathcal{B}$};

  \end{axis}
  \end{tikzpicture}

  \vspace{2pt}
  {\small $\gamma = 0$}
\end{minipage}%
\hfill
\begin{minipage}{.248\textwidth}
  \centering
  \begin{tikzpicture}
  \begin{axis}[
      width=\textwidth,
      xmin=-0.5, xmax=1.5,
      ymin=-1.5, ymax=1.5,
      axis lines=middle,
      xlabel={$ \varepsilon $},
      ylabel={$ \lambda $},
      domain=-0.2:1.5,
      restrict x to domain=-0.2:1.5,
      legend style={draw=none,font=\tiny},
      every axis/.append style={font=\tiny}
  ]

  \addplot[line width=1.5pt, blue, samples=200]
    {(2*(0.2+x)/(3*(1+1/3^(3/2))))^(3/2)*((1+1/3^2)/0.2)^(1/2)}
    node[pos=0.45, left, font=\tiny] {$\mathcal{H}$};
  \addplot[line width=1.5pt, blue, samples=200]
    {(2*(0.2+x)/(3*(1-1/3^(3/2))))^(3/2)*((1+1/3^2)/0.2)^(1/2)}
    node[pos=0.25, left, font=\tiny] {$\mathcal{H}$};

  \addplot[line width=1.5pt, blue, samples=200]
    {-(2*(0.2+x)/(3*(1+1/3^(3/2))))^(3/2)*((1+1/3^2)/0.2)^(1/2)}
    node[pos=0.45,left, font=\tiny] {$\mathcal{H}$};
  \addplot[line width=1.5pt, blue, samples=200]
    {-(2*(0.2+x)/(3*(1-1/3^(3/2))))^(3/2)*((1+1/3^2)/0.2)^(1/2)}
    node[pos=0.25, left, font=\tiny] {$\mathcal{H}$};

  \addplot[line width=1.5pt, black, samples=2] {0}node[pos=0.65, above, font=\tiny] {$\mathcal{B}$};

  \addplot[line width=1.5pt, black, samples=200]
    {sqrt((x+0.2)^3/1.35)}
    node[pos=0.60, right, font=\tiny] {$\mathcal{B}$};
  \addplot[line width=1.5pt, black, samples=200]
    {-sqrt((x+0.2)^3/1.35)}
    node[pos=0.60, right, font=\tiny] {$\mathcal{B}$};

  \end{axis}
  \end{tikzpicture}

  \vspace{2pt}
  {\small $\gamma = \tfrac{1}{3}$}
\end{minipage}%
\hfill
\begin{minipage}{.248\textwidth}
  \centering
  \begin{tikzpicture}
  \begin{axis}[
      width=\textwidth,
      xmin=-0.5, xmax=1.5,
      ymin=-1.5, ymax=1.5,
      axis lines=middle,
      xlabel={$ \varepsilon $},
      ylabel={$ \lambda $},
      domain=-0.2:1.5,
      restrict x to domain=-0.2:1.5,
      legend style={draw=none,font=\tiny},
      every axis/.append style={font=\tiny}
  ]

  \addplot[line width=1.5pt, blue, samples=200]
    {(2*(0.2+x)/(3*(1+(1.5)/3^(1/2))))^(3/2)*((1+(1.5)^2)/0.2)^(1/2)}
    node[pos=0.45, left, font=\tiny] {$\mathcal{H}$};
  \addplot[line width=1.5pt, blue, samples=200]
    {(2*(0.2+x)/(3*(1-(1.5)/3^(1/2))))^(3/2)*((1+(1.5)^2)/0.2)^(1/2)}
    node[pos=0.013, left, font=\tiny] {$\mathcal{H}$};
  \addplot[line width=1.5pt, blue, samples=200]
    {-(2*(0.2+x)/(3*(1+(1.5)/3^(1/2))))^(3/2)*((1+(1.5)^2)/0.2)^(1/2)}
    node[pos=0.45, left, font=\tiny] {$\mathcal{H}$};
  \addplot[line width=1.5pt, blue, samples=200]
    {-(2*(0.2+x)/(3*(1-(1.5)/3^(1/2))))^(3/2)*((1+(1.5)^2)/0.2)^(1/2)}
    node[pos=0.013, left, font=\tiny] {$\mathcal{H}$};

  \addplot[line width=1.5pt, black, samples=2] {0}node[pos=0.65, above, font=\tiny] {$\mathcal{B}$};

  \addplot[line width=1.5pt, black, samples=200]
    {sqrt((x+0.2)^3/1.35)}
    node[pos=0.60, right, font=\tiny] {$\mathcal{B}$};
  \addplot[line width=1.5pt, black, samples=200]
    {-sqrt((x+0.2)^3/1.35)}
    node[pos=0.60, right, font=\tiny] {$\mathcal{B}$};

  \end{axis}
  \end{tikzpicture}

  \vspace{2pt}
  {\small $\gamma = 1.5$}
\end{minipage}
\hfill
\begin{minipage}{.248\textwidth}
  \centering
  \begin{tikzpicture}
  \begin{axis}[
      width=\textwidth,
      xmin=-0.5, xmax=1.5,
      ymin=-1.5, ymax=1.5,
      axis lines=middle,
      xlabel={$ \varepsilon $},
      ylabel={$ \lambda $},
      domain=-0.2:1.5,
      restrict x to domain=-0.2:1.5,
      legend style={draw=none,font=\tiny},
      every axis/.append style={font=\tiny}
  ]

  \addplot[line width=1.5pt, blue, samples=200]
    {(2*(0.2+x)/(3*(1+(10)/3^(1/2))))^(3/2)*((1+(10)^2)/0.2)^(1/2)}
    node[pos=0.75, right, font=\tiny] {$\mathcal{H}$};
  \addplot[line width=1.5pt, blue, samples=200]
    {-(2*(0.2+x)/(3*(1+(10)/3^(1/2))))^(3/2)*((1+(10)^2)/0.2)^(1/2)}
    node[pos=0.75, right, font=\tiny] {$\mathcal{H}$};

  \addplot[line width=1.5pt, black, samples=2] {0}node[pos=0.65, above, font=\tiny] {$\mathcal{B}$};

  \addplot[line width=1.5pt, black, samples=200]
    {sqrt((x+0.2)^3/1.35)}
    node[pos=0.60, left, font=\tiny] {$\mathcal{B}$};
  \addplot[line width=1.5pt, black, samples=200]
    {-sqrt((x+0.2)^3/1.35)}
    node[pos=0.60, left, font=\tiny] {$\mathcal{B}$};

  \end{axis}
  \end{tikzpicture}

  \vspace{2pt}
  {\small $\gamma = 10$}
\end{minipage}

\caption{The hysteresis, $\mathcal{H}$, and bifurcation, $\mathcal{B}$ sets for Eq. \eqref{eq:system4_prelim} at $\mu = 0.2$ and selected values of $\gamma$.} 
\label{fig:system4_BH}
\end{figure}

\medskip

Fig. \ref{fig:system4_BH} and Eqs. \eqref{eq:sys4_H_solution} and \eqref{eq:Bif3} suggest the following observations:
\begin{itemize}
\item The slice structure is symmetric with respect to the map $(\sigma,\gamma)\mapsto(-\sigma,-\gamma)$ and $\lambda\mapsto-\lambda$.
\item The $\mathcal{B}$-slices are independent of $\gamma$.
    \item At $\gamma = 0$, the two $\mathcal{H}$-slices in the upper half-place $\lambda > 0$ coincide.
    \item As $\gamma\rightarrow \sqrt{3}$, the $\mathcal{H}$-slices with the ``+" sign in front of $\tfrac{1}{\sqrt{3}}\gamma$ approach the $\mathcal{B}$-slices:
\begin{equation}
    \lambda = \pm\left(\frac{2(\mu+\varepsilon)}{3\left[1 +\tfrac{1}{\sqrt{3}}\gamma\right]}\right)^{3/2}\left(\frac{1+\gamma^2}{\mu}\right)^{1/2} ~\rightarrow ~\pm\frac{2(\mu+\varepsilon)^{3/2}}{(27\mu)^{1/2}}.
\end{equation}
On the other hand, the $\mathcal{H}$-slices with the ``$-$" sign in front of $\tfrac{1}{\sqrt{3}}\gamma$,
\begin{equation}
    \lambda = \pm\left(\frac{2(\mu+\varepsilon)}{3\left[1 -\tfrac{1}{\sqrt{3}}\gamma\right]}\right)^{3/2}\left(\frac{1+\gamma^2}{\mu}\right)^{1/2},
\end{equation}
approach vertical line $\epsilon=-\mu$ as $\gamma\rightarrow\sqrt{3}$.
\item At $\gamma > \sqrt{3}$, only the $\mathcal{H}$-slices with the ``+" sign in front of $\tfrac{1}{\sqrt{3}}\gamma$ remain, while the $\mathcal{H}$-slices with the ``$-$" sign in front of $\tfrac{1}{\sqrt{3}}\gamma$ disappear.
\end{itemize}

Fig. \ref{fig:system4_singularity} displays a comprehensive collection of phase and bifurcation diagrams of the unfolding in Eq. \eqref{eq:system4_prelim}. The bottom left subfigure shows the hysteresis surfaces located in the upper half-space $\lambda > 0$ plotted in blue and the upper bifurcation surface plotted in black. The other three subfigures show slices of these surfaces by the planes $\gamma = 1$, $\gamma = 0$, and $\gamma = -1$, along with representative and special bifurcation diagrams of $|u|^2$ versus $\sigma$. Solid green lines represent asymptotically stable intervals, while dashed blue lines depict unstable ones. The stability of the branches was checked using the determinant and trace stability conditions found in Appendix \ref{appE}.
Note that the local changes in the bifurcation diagrams $|u|^2$ versus $\sigma$ at the hysteresis and bifurcation singularities match those depicted in Fig. \ref{fig:HBschematic}.

\begin{figure}[htbp]
\centerline{\includegraphics[width=0.9\textwidth]{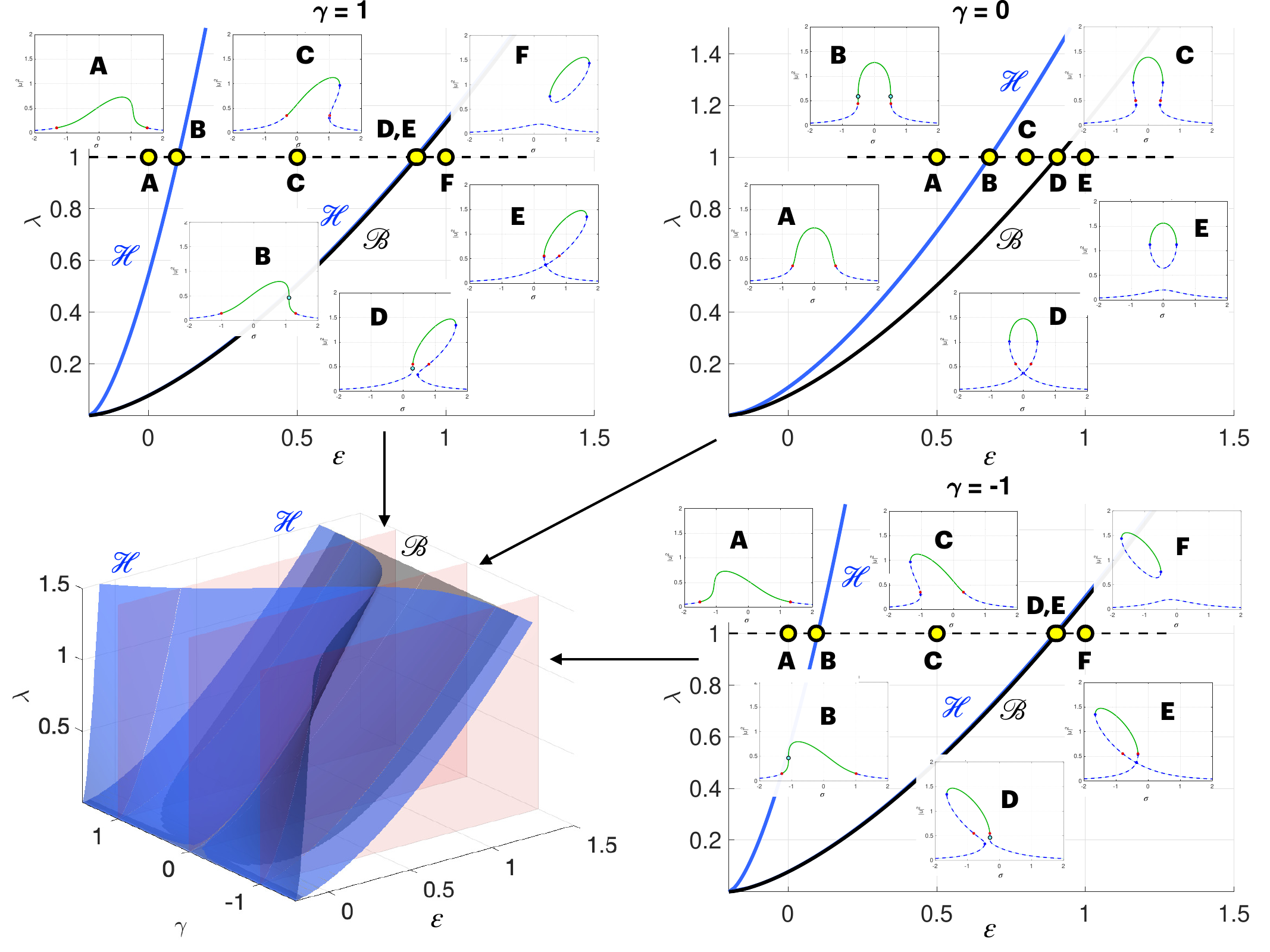}}
 \caption{The results of the singularity theory approach applied to system \eqref{eq:system4}. We treat $\sigma$ as the bifurcation parameters, while $\lambda$, $\varepsilon$, and $\gamma$ as the unfolding parameters in Eq. \eqref{eq:system4_prelim}. The parameter $\mu$ is fixed at $\mu = 0.2$. The 3D figure in the bottom left displays the hysteresis, $\mathcal{H}$ (blue), and the bifurcation, $\mathcal{B}$ (black), surfaces in the unfolding parameters $(\varepsilon,\gamma,\lambda)$-space. The pink planes correspond to the selected values of $\gamma$: $\gamma = 1$, $\gamma = 0$, and $\gamma = -1$. The sections of the phase diagrams by these planes are shown in the top left, top right, and bottom right, respectively. The blue curves are the hysteresis curves, while the black ones are the bifurcation curves. Note that the black and one of the blue curves are very close at $\gamma = \pm 1$. The insets show the plots of $x\equiv|u|^2$ versus $\sigma$ as fixed $\lambda = 1$, $\gamma\in\{1,0,-1\}$, and $\varepsilon$ marked by the yellow dots with black edges. The solid pieces correspond to asymptotically stable periodic solutions of system \eqref{eq:system4}, while the dashed ones to unstable periodic solutions.  The hysteresis points, marked by cyan dots with black edges, are those with the vertical tangent.  The curves $|u|^2$ versus $\sigma$ split into two connected components at the bifurcation points.
}
\label{fig:system4_singularity}
\vspace*{12pt}
\end{figure}

\subsubsection{Reconciling with the phase diagrams for the reduced system}
Fig. \ref{fig:system4_HBpts} relates the hysteresis and bifurcation points given by Eqs. \eqref{eq:sys4_H_solution} and \eqref{eq:Bif3}, respectively, to the reduced system phase diagrams in the $(\tilde{\sigma},\tilde{\mu})$-plane in Figs. \ref{fig:system2_phase} and \ref{fig:system3_phase_gamma1}. 
The hysteresis points in Eq. \eqref{eq:sys4_H_solution} rewritten in terms of $\tilde{\sigma} = \tfrac{\sigma}{\lambda}\sqrt{\tfrac{\mu+\varepsilon}{\mu}}$, $\tilde{\mu}=\tfrac{\mu+\varepsilon}{\lambda}\sqrt{\tfrac{\mu+\varepsilon}{\mu}}$, and $|v|^2 = |u|^2/(\mu + \varepsilon)$ are:
\begin{equation}
\label{eq:Hreduced}
    \tilde{\mu} = \frac{3\sqrt{3}}{2\sqrt{2}} \frac{(1\pm\tfrac{\gamma}{\sqrt{3}})^{3/2}}{(1+\gamma^2)^{1/2}},\quad 
    \tilde{\sigma} =  \frac{3\sqrt{3}}{2\sqrt{2}}\frac{(\gamma\mp\tfrac{1}{\sqrt{3}})(1\pm\tfrac{\gamma}{\sqrt{3}})^{1/2}}{(1+\gamma^2)^{1/2}},\quad
    |v|^2 = \frac{2}{3}\left(1\pm\tfrac{\gamma}{\sqrt{3}}\right)^{-1}.
\end{equation}
One can check that Eq. \eqref{eq:Hreduced} with $\gamma = 0$ matches the singularity described in Statement 4 of Proposition \ref{prop1}:
\begin{equation}
    \label{eq:prop1_singularity}
    \tilde{\sigma}\equiv\frac{\sigma}{\lambda} = \pm\frac{3}{2\sqrt{2}},\quad \tilde{\mu}\equiv \frac{\mu}{\lambda} = \frac{3\sqrt{3}}{2\sqrt{2}},\quad |v|^2\equiv \frac{x}{\mu} = \frac{2}{3}.
\end{equation}
 The bifurcation locus with the condition $\tfrac{4}{27}(\mu + \varepsilon)^3 = \lambda^2\mu$ is equivalent to $\tilde{\mu} = \tfrac{3\sqrt{3}}{2}$, where $\tilde{\mu} = \tfrac{1}{\lambda\mu^{1/2}}(\mu+\varepsilon)^{3/2}$ is defined in Eqs. \eqref{eq:redef_mu_sigma1} and \eqref{eq:gamma_redef}. The corresponding $|v|^2 = |u|^2/(\mu + \varepsilon) = \tfrac{1}{3}$, and $\tilde{\sigma} = \tfrac{1}{3}\gamma\tilde{\mu}$.  The bifurcation points $(\tfrac{3\sqrt{3}}{2},\gamma\tfrac{\sqrt{3}}{2})$ in the diagram in Figs. \ref{fig:system2_phase} and \ref{fig:system3_phase_gamma1} are the highest points on the segments of the dash-dotted green curves between their two cusps corresponding to the hysteresis points.

\begin{figure}[htbp]
\centerline{\includegraphics[width=0.9\textwidth]{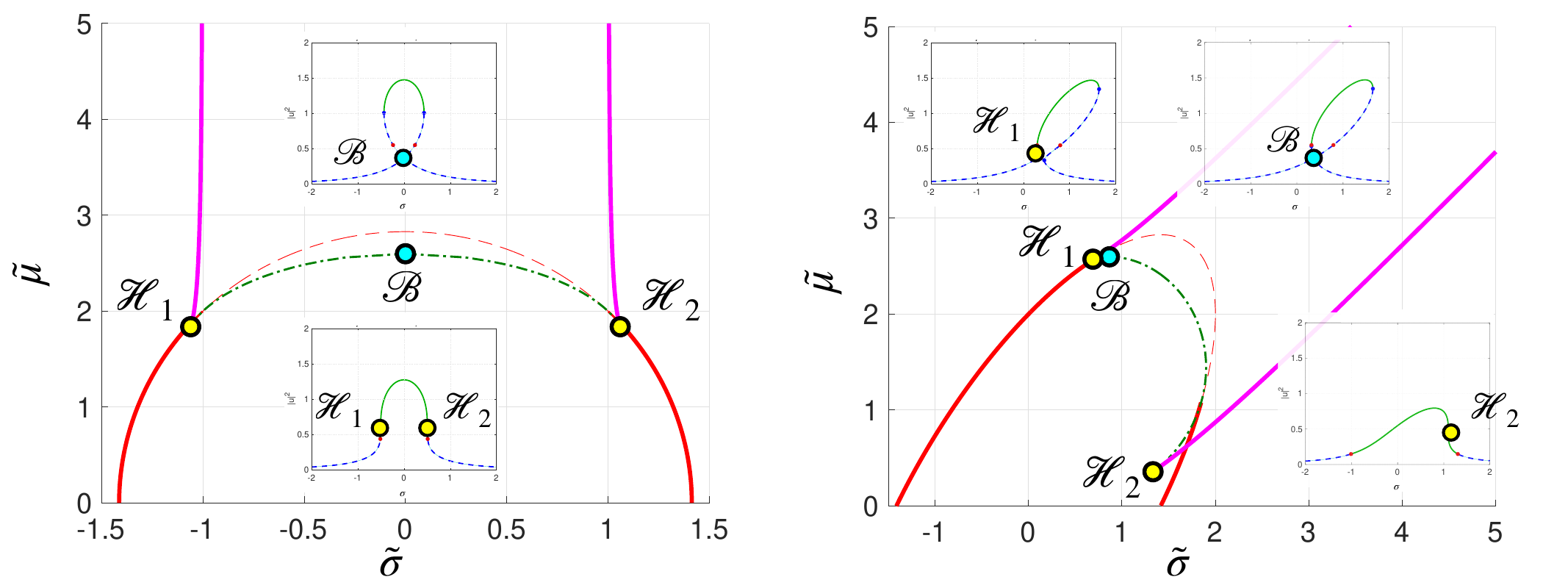}}
 \caption{The hysteresis and bifurcation points on the phase diagram for the reduced system \eqref{eq:gamma_v}. The hysteresis points corresponds to the singularities of the curve $\det J = 0$, while the bifurcation points are the highest points of this curve between its singularities. Left: $\gamma = 0$. Right: $\gamma = 1$.
}
\label{fig:system4_HBpts}
\vspace*{12pt}
\end{figure}


\subsection{Response amplification in two-cell Stuart-Landau feedforward networks}
We have investigated two-cell feedforward networks of inhomogeneous Stuart-Landau oscillators. We now summarize the effects of parameter inhomogeneities on the amplitude of the second cell. 
\begin{figure}[htbp]
\centerline{\includegraphics[width=0.9\textwidth]{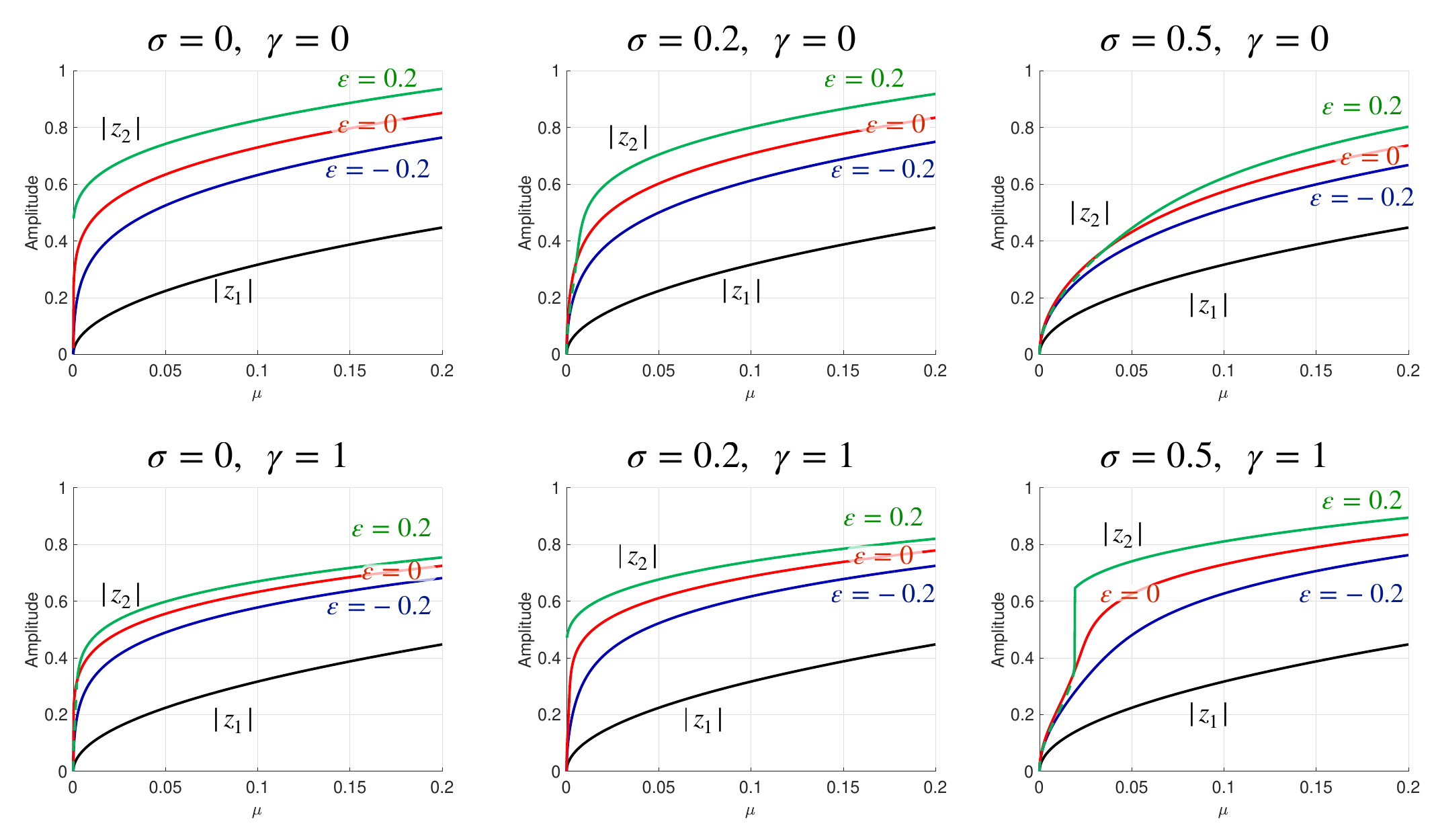}}
 \caption{The amplitudes of the cells $z_1$ and $z_2$ of the periodic solutions of system \eqref{eq:system4} as the functions of $\mu$ at $\sigma\in\{0,0.2,0.5\} $, $\varepsilon\in\{-0.2,0,0.2\}$, $\gamma\in\{0,1\}$, and $\lambda = 1$. The black curves depict the amplitude of the first cell, $|z_1|$, while the navy blue, red, and green curves correspond to the amplitude of the second cell, $|z_2|\equiv |u|$ at $\varepsilon = -0.2$, $\varepsilon =0$, and $\varepsilon =0.2$, respectively. The solid lines correspond to asymptotically stable periodic solutions, while the dashed lines represent unstable ones. The green curves representing the case $\varepsilon = 0.2$ contain unstable intervals in some cases.
}
\label{fig:system4_amplitude}
\vspace*{12pt}
\end{figure}

\begin{enumerate}
    \item {\bf The effect of inhomogeneity in frequency.} The key finding is that the inhomogeneity that fundamentally affects the amplitude of the second cell is the inhomogeneity in frequency, represented by $\sigma$. In particular, if $\gamma = 0$, then the amplitude $|u|$ of the second cell reaches its maximum at $\sigma = 0$. If $\gamma > 0$, the same maximum value of $|u|$ is reached at $\sigma = |u|^2_{\max}\gamma$ -- see the insets in Fig. \ref{fig:system4_singularity}. 
    
     \item {\bf The range of the phase-locked solution.} Figures \ref{fig:system2_phase}, \ref{fig:system3_ellipses}, and \ref{fig:system3_phase_gamma1} show that the phase-locked attractors persist for a broad range of inhomogeneity in frequency. The region where the phase-locked attractor exists includes the strip
    \begin{equation}
    \label{eq:strip_phase_lock}
    \left|\sigma - \gamma(\mu+\varepsilon)\right|\le \lambda\sqrt{\frac{\mu}{\mu+\varepsilon}}.
    \end{equation}
    As $\mu+\varepsilon\rightarrow 0+$, the interval where the phase-locked attractor exists approaches 
    \begin{equation}
    \label{eq:phase_locked_range}
    |\sigma| < \lambda\sqrt{\frac{\mu + \varepsilon}{\mu}}\sqrt{2}.
    \end{equation}
   
    \item {\bf The rapid growth of amplitude at small $\boldsymbol{\mu}$.} Eq. \eqref{eq:gamma_u2} shows that, if $\sigma = 0$, the amplitude $|u|$ of the second cell 
    is approximately equal to $\lambda^{1/3}\mu^{1/6}(1 + \gamma^2)^{-1/3}$ at small $\mu$. This is consistent with the result of \cite{Levasseur_Palacios_2021}.
    If $\sigma \neq 0$, then $|u|\le \tfrac{\lambda^2}{\sigma^2}\sqrt{\mu}$. This rapid amplitude growth is evident in Fig. \ref{fig:system4_amplitude}.

    \item {\bf The effect of inhomogeneity in the excitation parameter.} If $\varepsilon < 0$, the periodic solution of system \eqref{eq:system4} is asymptotically stable. The amplitude at the second cell is smaller than its corresponding amplitude at $\varepsilon = 0$ -- compare the navy blue and red curves in Fig. \ref{fig:system4_amplitude}. 
    In contrast, if $\varepsilon > 0$, the amplitude of the second cell is larger than at $\varepsilon = 0$; however, the periodic solution is unstable at small $\mu$ at $\gamma = 0$ and may be unstable at $\gamma \neq 0$, depending on $\sigma$. This is evident from Fig. \ref{fig:system4_amplitude}, where the green curves tend to lie above the red ones, and may have a dashed interval indicating instability of the corresponding solution.

    \item {\bf The effect of inhomogeneity in the cubic nonlinearity parameter.} The nonzero $\gamma$ does not affect the amplitude of the second cell. It merely distorts the graph of $|u|^2$ versus $\sigma$, as is evident from the insets in Fig. \ref{fig:system4_singularity}. However, the maximal value of the amplitude shifts away from the center of the stability interval of $\sigma$ as $\gamma$ increases.
\end{enumerate}


\section{Discussion}
\label{sec:discussion}
Theoretical works~\cite{Levasseur_Palacios_2021,Levasseur2022beam1,Levasseur2025beam2} reveal that coupling similar nonlinear oscillators in a feedforward fashion can lead, under certain conditions, to accelerated growth rates of amplitudes of oscillations. This feature can be used to create new mechanisms for weak-signal detection and amplification, as well as for new engineering applications, such as beam steering. While related mathematical models may assume identical cells, or oscillators, in practice, we must account for the fact that even the most sophisticated circuit components have imperfections. These imperfections can be due to manufacturing processes that introduce, for example, slight fluctuations or inhomogeneities in the internal operating parameters of each unit, or to variations in the oscillators' coupling circuitry. On the other hand, it is worth checking whether inhomogeneities may have a positive effect and should therefore be introduced by design. Motivated by these considerations, we have studied the effects of inhomogeneities on the collective response of two-cell feedforward networks. 
\medskip

We have investigated the effects of parameter inhomogeneities in two-cell feedforward networks consisting of $(i)$ pitchfork cells and $(ii)$ Stuart-Landau cells. We used analysis of cubic roots, numerical simulations, and singularity theory for pitchfork cells, and model reduction, numerical simulations, and singularity theory for Stuart-Landau cells. Singularity theory is a powerful tool for identifying loci of bifurcation and hysteresis, where the number of equilibrium points changes. The other analytical and numerical tools we employed were instrumental in finding equilibria and periodic solutions and in assessing their stability.
\medskip

The effects of fluctuations in the excitation parameters in pitchfork cells on the number of equilibrium points and transitions between them to amplified values were studied. Our analysis of the basins of attraction reveals that macroscopic jumps in equilibrium points are possible under certain conditions. In practice, it can be very difficult to control the initial conditions of electronic components, so the analysis has led to a new configuration that guarantees the network response will always transition to the highest amplification rate. This new configuration can prove useful for amplifying DC (Direct Current) signals, for instance. 
\medskip

The combined effects of inhomogeneities in the excitation, frequency, and cubic nonlinearity parameters of Stuart-Landau cells were also investigated. A phase-space analysis revealed complex behavior, e.g., tori bifurcations into quasi-periodic oscillations. 
The phase-locked attractor persists through a surprisingly broad range of inhomogeneities in the natural frequency. While this inhomogeneity reduces the second cell's amplification factor relative to the first, the effect is small for small inhomogeneities. Inhomogeneity in the excitation parameter may amplify the response of the second cell, but when combined with other inhomogeneities, may lead to instability of the phase-locked solution. The effect of small inhomogeneity in the cubic nonlinearity is primarily neutral: it makes the graph of the amplitude of the second cell versus the inhomogeneity in frequency slanted, without changing its height. 
\medskip

The work conducted in this manuscript is not exhaustive. Indeed, there are many other issues worth investigating. Chief among them is the interplay between external signals and the feedforward response. In beam steering, for instance, a feedforward network can be used to transmit (while amplified) signals or to receive incoming radio-frequency signals. In the homogeneous case, 1:1 synchronization with the external signal has been demonstrated, but in the inhomogeneous case, the effects of the interaction remain unknown. Another important issue is the effects of inhomogeneous coupling and disorder on the delay of a signal traveling down the chain of oscillators in the feedforward array. And, of course, inhomogeneities in larger arrays can lead to much more complicated dynamics worth investigating.
These and many other related issues remain open and constitute future work.

\section*{Code availability}
Our codes for generating figures are available in the GitHub repository\\
 \href{https://github.com/mar1akc/TwoCellBifurcatingNetworks}{https://github.com/mar1akc/TwoCellBifurcatingNetworks}.

\section*{ACKNOWLEDGEMENTS}
We gratefully acknowledge support from the Office of Naval Research, Grant No. N000142412547 (M.C. and A.P.). A.P. was also supported by DoD Naval Information Warfare Center (NIWC) Pacific, San Diego, Grant No. N66001-21-D-0041 and by the NIWC internal S\&T program. We thank Mr. Perrin Ruth for valuable discussions of this work.

\appendix
\setcounter{equation}{0}
\renewcommand{\theequation}{\Alph{section}-\arabic{equation}}
    \setcounter{lemma}{0}
    \renewcommand{\thelemma}{\Alph{section}\arabic{lemma}}

\section*{Appendix}

\section{Equilibria of the feedforward network of pitchfork cells }
\label{AppA}
In this appendix, we will elaborate on the dependence of the roots of the cubic polynomials $p_{\pm}$  on parameters $\varepsilon$, $\mu$, and $\lambda$.

Let $\mu > 0$ and $\mu + \varepsilon > 0$. Let $x = \pm\sqrt{\mu}$, i.e., the first cell is in one of its stable equilibria. The equilibria of the second cell $y$ are the roots of the cubic polynomials
\begin{equation}
    \label{eq:p1cubic}
    p_{+}(y): = (\mu + \varepsilon) y - y^3  - \lambda \sqrt{\mu}\quad {\rm and}\quad p_{-}(y): = (\mu + \varepsilon) y - y^3  + \lambda \sqrt{\mu},
\end{equation}
respectively. These polynomials tend to $\pm\infty$ as $y\rightarrow\mp\infty$. They are nonmonotone if $\mu + \varepsilon> 0$ and strictly decreasing otherwise. If $\mu+\varepsilon > 0$, their minimum and maximum are located at $y_{\min} = -\sqrt{\tfrac{\mu+\varepsilon}{3}}$ and $y_{\max} = \sqrt{\tfrac{\mu+\varepsilon}{3}}$, respectively, as sketched in Fig. \ref{fig:sys1_cubic}(a).  Their values at these extrema are 
\begin{equation}
\label{eq:cubic_sys1_extrema}
\begin{aligned}
    p_{+}(y_{\min}) &= -\frac{2}{3\sqrt{3}}(\mu+\varepsilon)^{3/2} -\lambda\sqrt{\mu},\quad  p_{+}(y_{\max}) &= \frac{2}{3\sqrt{3}}(\mu+\varepsilon)^{3/2} -\lambda\sqrt{\mu},\\
     p_{-}(y_{\min}) &= -\frac{2}{3\sqrt{3}}(\mu+\varepsilon)^{3/2} +\lambda\sqrt{\mu},\quad  p_{-}(y_{\max}) &= \frac{2}{3\sqrt{3}}(\mu+\varepsilon)^{3/2}+\lambda\sqrt{\mu}.   
\end{aligned}
\end{equation}
\begin{figure}[htbp]
\centerline{
(a)\includegraphics[width=0.9\textwidth]{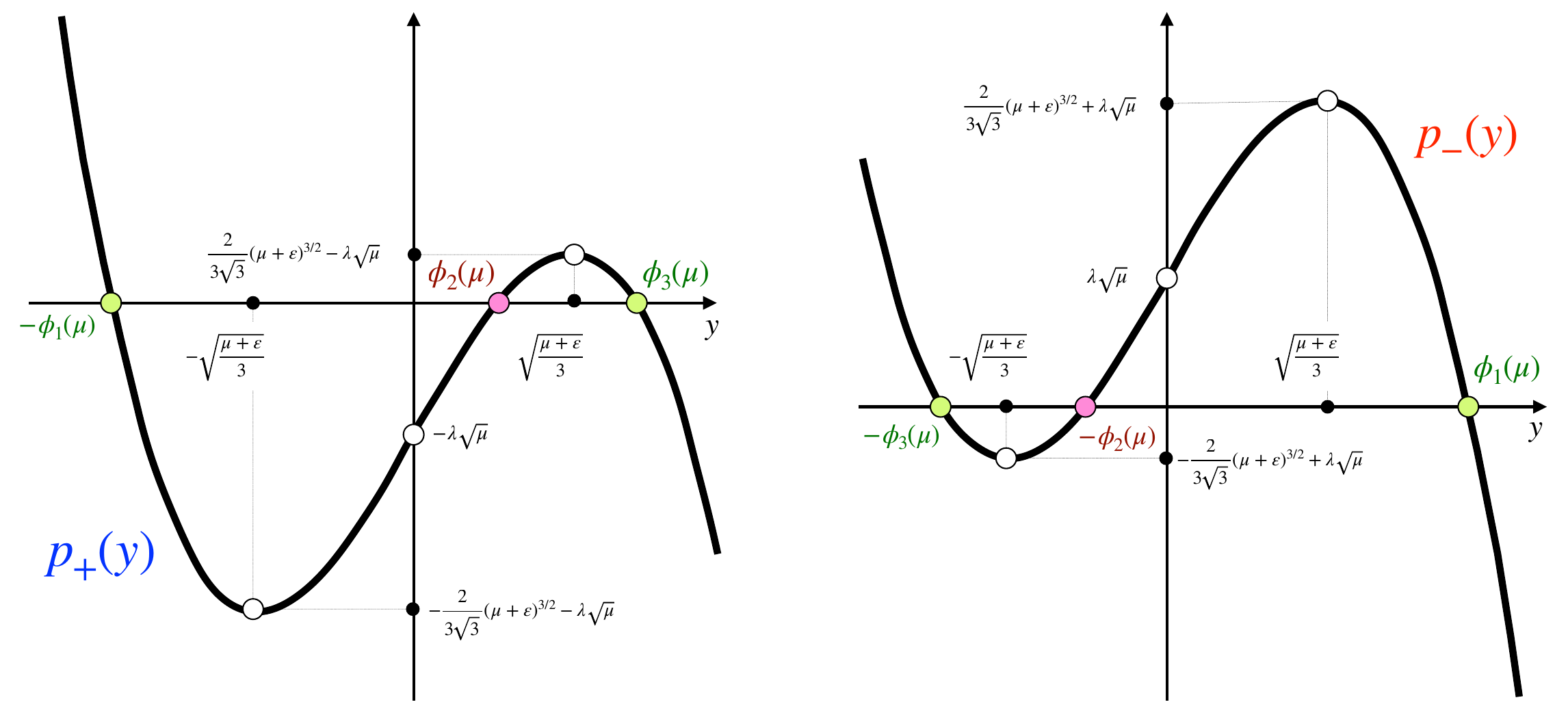}
}
\centerline{
(b) \includegraphics[width=0.9\textwidth]{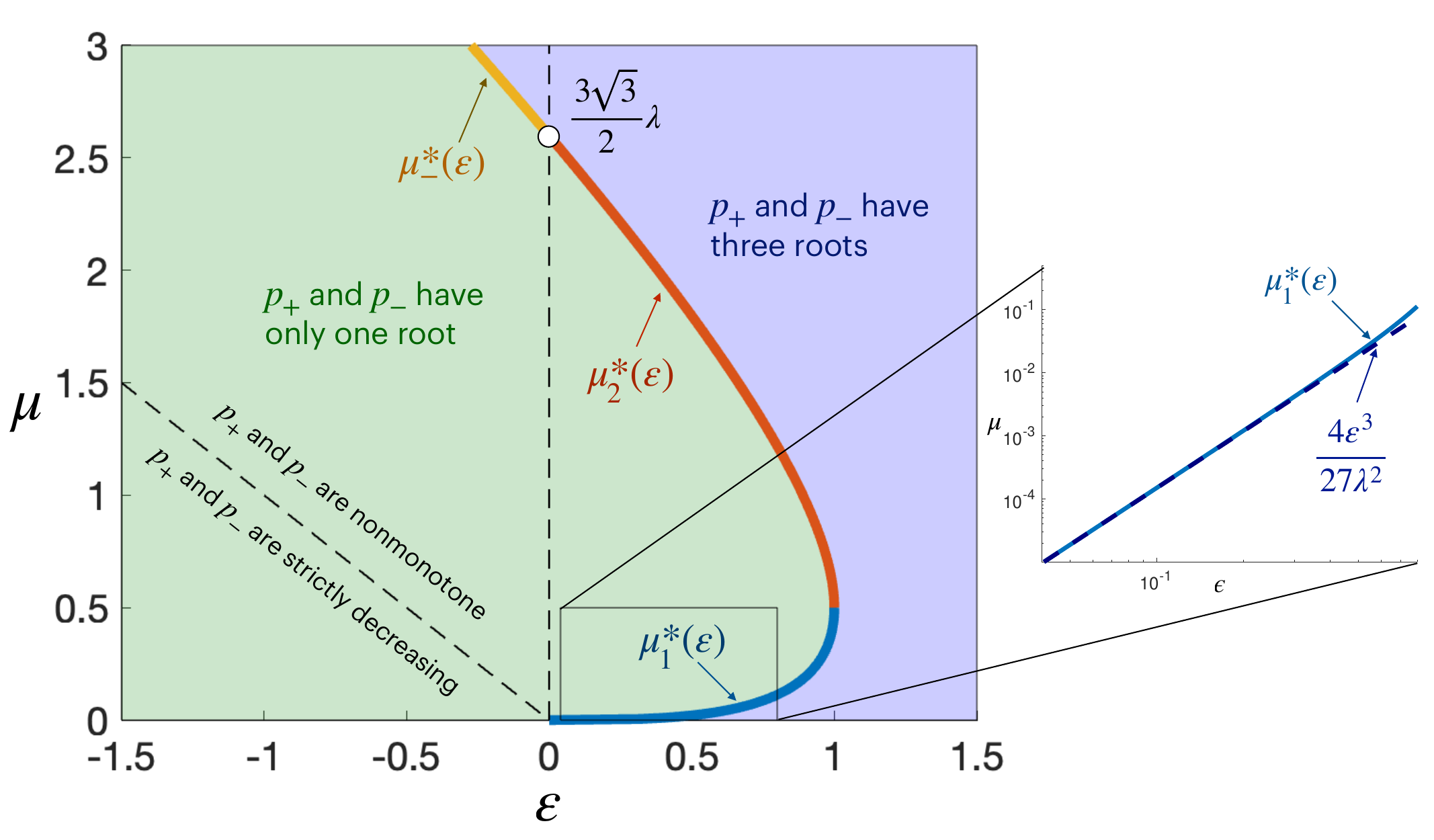}
}
\caption{(a): The cubic polynomials $p_{+}(y)$ and $p_{-}(y)$ defined in \eqref{eq:p1cubic}. (b): A phase diagram of real roots of polynomials $p_{+}$ and $p_{-}$ in the $(\varepsilon,\mu)$-plane at $\lambda = 1$. The curve consisting of blue, red, and yellow pieces, corresponding, respectively, to $\mu_1^*(\varepsilon)$, $\mu_2^*(\varepsilon)$, and $\mu^*(\varepsilon)$, is defined by Eq. \eqref{eq:sys1:critical}.}
\label{fig:sys1_cubic}
\vspace*{12pt}
\end{figure}
Therefore, if $x = \sqrt{\mu}$, $y$ has one or three equilibria, depending on whether  $p_{+}(y_{\max})$ is less or greater than zero. If $x = -\sqrt{\mu}$, $y$ has one or three equilibria depending on whether  $p_{-}(y_{\max})$ is greater or less than zero. In both cases, the steady-state bifurcation in $y$ occurs at the parameter values satisfying
\begin{equation}
    \label{eq:sys1:critical}
   2 (\mu+\varepsilon)^{3/2} = 3\sqrt{3}\lambda\mu^{1/2}.
\end{equation}
The left-hand side of Eq.~\eqref{eq:sys1:critical} is convex in $\mu$ while the right-hand side is concave in $\mu$. Hence, Eq.~\eqref{eq:sys1:critical} has at most two roots. If $\varepsilon < 0$, it has one root on its domain $\mu\ge -\varepsilon > 0$. If $\varepsilon \ge 0$, it transitions from having two roots to having no roots at the value of $\varepsilon$ where Eq.~~\eqref{eq:sys1:critical} holds and the derivatives of its right- and left-hand side are equal. The latter condition is equivalent to  
\begin{equation}
    \label{eq:sys1_critical1}
    (\mu+\varepsilon)^{1/2} = \frac{\lambda\sqrt{3}}{2\mu^{1/2}}.
\end{equation} 
Substituting Eq.~\eqref{eq:sys1_critical1} to Eq.~\eqref{eq:sys1:critical} and taking into account that $\lambda > 0$, we find that if Eq.~\eqref{eq:sys1:critical} has only one root, it must be $\mu = \tfrac{1}{2}\lambda$. Then Eq.~\eqref{eq:sys1_critical1} yields the critical value of $\varepsilon$: 
\begin{equation}
    \label{eq:sys1_critical2}
    \varepsilon = \lambda.
\end{equation} 
Therefore, if $\varepsilon<\lambda$, Eq.~\eqref{eq:sys1:critical} has two roots, if $\varepsilon > \lambda$, it has no roots, and it has a unique root $\mu = \tfrac{1}{2}\lambda$ at $\varepsilon = \lambda$. 

Now we describe the structure and stability of the roots of polynomials $p_{+}(y)$ and $p_{-}(y)$. The stability type is defined by the eigenroots of the Jacobian in Eq. \eqref{eq:system1_J}: $\mu-3x^2$ and $\mu + \varepsilon - 3y^2$. We assume that $\lambda > 0$. A complete phase diagram of real roots of $p_+$ and $p_-$ is shown in Figs. \ref{fig:sys1_phase_bifur} and \ref{fig:sys1_cubic}(b).

\begin{itemize}
\item {\bf Case $\boldsymbol{\varepsilon}\boldsymbol{ =}\boldsymbol{ 0}$.}  This case was studied in~\cite{Levasseur_Palacios_2021}. Eq. \eqref{eq:sys1:critical} has two roots, $0$ and $\tfrac{3\sqrt{3}}{2}\lambda$. The bifurcation diagram for $\varepsilon = 0$ is shown in Fig.~\ref{fig:sys1_phase_bifur}.

    \begin{itemize}
        \item If $\mu < 0$, $(x=0,y=0)$ is a stable node.
        \item If $0< \mu<\tfrac{3\sqrt{3}}{2}\lambda$, the three equilibria corresponding to $x = 0$, $(x=0,y=0)$ and $(x = 0,y=\pm\sqrt{\mu})$, are source and saddles respectively. If $x = \pm\sqrt{\mu}$,
        the polynomials $p_{\pm}$ have unique roots, $\mp\phi_1(\mu)$, respectively. The corresponding equilibria of system~\eqref{eq:system1}, $(x=\sqrt{\mu},y=-\phi_1(\mu))$ and $(x=-\sqrt{\mu},y=\phi_1(\mu))$, are stable nodes. Note that $\phi_1(\mu) > {\tfrac{\mu}{3}}$ as is clear from Fig. \ref{fig:sys1_cubic}.
        \item If $\mu > \tfrac{3\sqrt{3}}{2}\lambda$, system~\eqref{eq:system1} acquires two additional stable equilibria, because the polynomials $p_{+}(y)$ and $p_{-}(y)$ acquire an additional pair of roots, $\phi_2(\mu)$, $\phi_3(\mu)$ and $-\phi_2(\mu)$, $-\phi_3(\mu)$, respectively. Fig. \ref{fig:sys1_cubic} shows that $0<\phi_2(\mu) < \sqrt{\tfrac{\mu}{3}}$,  while $\phi_3(\mu) > \sqrt{\tfrac{\mu}{3}}$. The equilibria $(x = \sqrt{\mu},y=\phi_2(\mu))$  and $(x = -\sqrt{\mu},y=-\phi_2(\mu))$ are saddles, while $(x = \sqrt{\mu},y=\phi_3(\mu))$  and $(x = -\sqrt{\mu},y=-\phi_3(\mu))$ are stable nodes. 
        \end{itemize}

\item {\bf Case $\boldsymbol{ 0}\boldsymbol{<}\boldsymbol{\varepsilon}\boldsymbol{ <}\boldsymbol{\lambda}$.} 
The bifurcation diagram for this case is exemplified by the one for $\varepsilon = 0.8$ in Fig.~\ref{fig:sys1_phase_bifur}. 
Eq. \eqref{eq:sys1:critical} has two roots, $\mu_1^*(\varepsilon)$ and $\mu_2^*(\varepsilon)$.
The qualitative behavior of $p_{+}(y)$ and $p_{-}(y)$ for at various $\mu > 0$ for $0 < \varepsilon <skip \lambda$ is shown in Fig.~\ref{fig:sys1roots}.
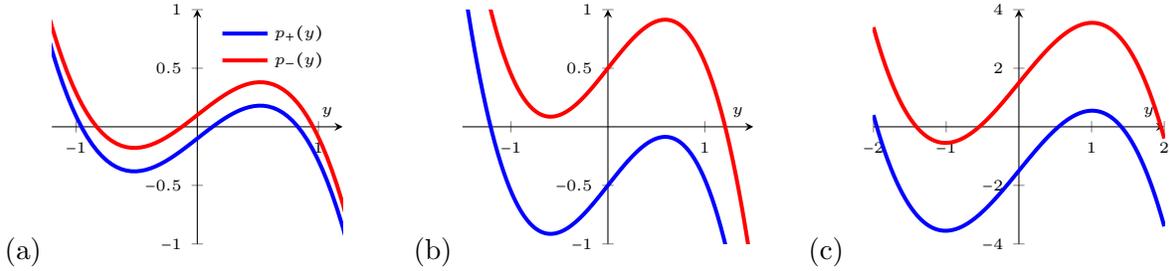
\begin{figure}[h]
\begin{minipage}{.33\textwidth}
  \centering
  (a)
\begin{tikzpicture}
    \begin{axis}[width = \textwidth,
    xmin = -1.2,xmax  = 1.2,
    ymin = -1,ymax = 1,
    axis lines = middle,
    xlabel = {$y$},
    domain = -2:2,
    restrict x to domain=-2:2,
    legend style={draw=none,font = \tiny},
    every axis/.append style={font=\tiny}
    ]
        \addplot[line width=1.5pt,color = blue,samples = 100]{0.81*x - x^3 - 0.1};
        \addlegendentry{$p_{+}(y)$}
        \addplot[line width=1.5pt,color = red,samples = 100]{0.81*x - x^3 + 0.1};
        \addlegendentry{$p_{-}(y)$}
    \end{axis}
\end{tikzpicture}
\end{minipage}%
\begin{minipage}{.33\textwidth}
 \centering
 (b)
\begin{tikzpicture}
    \begin{axis}[width = \textwidth,
    xmin = -1.5,xmax  = 1.5,
    ymin = -1,ymax = 1,
    axis lines = middle,
    xlabel = {$y$},
    domain = -1.5:1.5,
    restrict x to domain=-1.5:1.5,
    legend style={draw=none,font = \tiny},
    every axis/.append style={font=\tiny}
    ]
        \addplot[line width=1.5pt,color = blue,samples = 100]{1.05*x - x^3 - 0.5};
        \addplot[line width=1.5pt,color = red,samples = 100]{1.05*x - x^3 + 0.5};
    \end{axis}
\end{tikzpicture}
\end{minipage}%
\begin{minipage}{.33\textwidth}
 \centering
 (c)
\begin{tikzpicture}
    \begin{axis}[width = \textwidth,
    xmin = -2,xmax  = 2,
    ymin = -4,ymax = 4,
    axis lines = middle,
    xlabel = {$y$},
    domain = -2:2,
    restrict x to domain=-2:2,
    legend style={draw=none,font = \tiny},
    every axis/.append style={font=\tiny}
    ]
        \addplot[line width=1.5pt,color = blue,samples = 100]{3.05*x - x^3 - 1.5};
        \addplot[line width=1.5pt,color = red,samples = 100]{3.05*x - x^3 + 1.5};
    \end{axis}
\end{tikzpicture}
\end{minipage}%
\caption{The qualitative behavior of the polynomials $p_{+}(y)$ and $p_{-}(y)$ as $\mu$ increases from $0$ to $+\infty$ fixed $\lambda > 0$ and $0 < \varepsilon<\lambda$. (a): $p_{+}(y)$ and $p_{-}(y)$ have three roots at $\mu < \mu_{1}^*(\varepsilon)$. (b): Unique roots at $\mu_{1}^*(\varepsilon) < \mu < \mu_{2}^*(\varepsilon)$. (c): Three roots at $\mu < \mu_{2}^*(\varepsilon)$. $\mu_{1}^*(\varepsilon)$ and $\mu_{2}^*(\varepsilon)$ are the two roots of \eqref{eq:sys1:critical} at $0\le \varepsilon\le \lambda$. Here, $\lambda = 1$, $\varepsilon = 0.8$, and $\mu = 0.01$ (a), $\mu = 0.5$ (b), and $\mu = 1.5$ (c).} 
\label{fig:sys1roots}
\end{figure}

\medskip
The root $\mu_1^*(\varepsilon)$ tends to zero as $\varepsilon\rightarrow 0$. This means that the interval of $0<\mu<\mu_1^*(\varepsilon)$ where three roots pf $p_+(y)$ and $p_-(y)$ exist is short. To find how $\mu_1^*(\varepsilon)$ scales with $\varepsilon$ and $\lambda$, we recast Eq.~\eqref{eq:sys1:critical} as a cubic equation with respect to $t: = \mu^{1/3}$:
\begin{equation}
    \label{eq:sys1_cubic4t}
    t^3 - at + b=0,\quad{\rm where}\quad a: = \left(\frac{3\sqrt{3}\lambda}{2}\right)^{2/3},\quad b = \varepsilon.
\end{equation}
Its roots are given by Viete's trigonometric formula~\cite{Zucker_2008} 
\begin{equation}
    \label{eq:sys1_cubic_roots_trig}
    t_k = 2\sqrt{\frac{a}{3}}\cos\left[\frac{1}{3}\arccos\left(-\frac{3b}{2a}\sqrt{\frac{3}{a}}\right)+\frac{2\pi k}{3}\right],\quad k = 0, 1, 2.
\end{equation}
The roots corresponding to $k = 0,1$ tend, respectively, to $\pm\sqrt{a}$, while the root corresponding to $k = 2$ tends to zero as $b\equiv\varepsilon\rightarrow 0$. Taylor-expanding the root $t_2$ at $\varepsilon = 0$ we find
\begin{equation}
    \label{eq:mu_eps0}
    \mu_1^*(\varepsilon)\approx \frac{4\varepsilon^3}{27\lambda^2}.
\end{equation}
Cf. the inset of Fig. \ref{fig:sys1_cubic}(b). 
If this value of $\mu$ is much less than $\mu_1^*(\varepsilon)$, the following approximation for the three roots of $p_{+}(y)$ are obtained by Taylor-expanding Viete's formula \eqref{eq:sys1_cubic_roots_trig} with $a = \mu + \varepsilon$ and $b = \lambda\sqrt{\mu}$ and treating $\mu$ as a small positive parameter:
\begin{equation}
    \label{eq:sys1_three_roots_approx}
    y^*_{0,1} = \pm\sqrt{\varepsilon} - \frac{\lambda\sqrt{\mu}}{2\varepsilon} + O(\mu^{3/2}),\quad y^*_2 = \frac{\lambda\sqrt{\mu}}{\varepsilon}+O(\mu^{3/2}).
\end{equation}

    \begin{itemize}
        \item If $\mu < -\varepsilon$, $(x=0,y=0)$ is the only equilibrium of system \eqref{eq:system1}, and it is a stable node.
        \item If $-\varepsilon < \mu < 0$, system \eqref{eq:system1} has three equilibria: two stable nodes $(x=0,y=\pm\sqrt{\varepsilon + \mu})$ and a saddle $(x=0,y=0)$. 
        \item If $\mu > 0$,  
    the equilibria $(x=0,y=0)$ and $(x = 0,y=\pm\sqrt{\mu+\varepsilon})$ are source and saddles respectively. Let $x = \pm\sqrt{\mu}$.  Then the polynomials $p_{\pm}(y)$ have three roots if $0<\mu<\mu^*_1(\varepsilon)$ or $\mu>\mu^*_2(\varepsilon)$, and one root if $\mu^*_1(\varepsilon)<\mu<\mu^*_2(\varepsilon)$. The eigenvalues $\mu + \varepsilon - 3y^2$ of $J$ in Eq. \eqref{eq:system1_J}, corresponding to $y$, are negative if $y=\pm\phi_1(\mu)$ or $y = \pm \phi_3(\mu)$, as $\phi_1(\mu),\phi_3(\mu) > \sqrt{\tfrac{\mu+\varepsilon}{3}}$, and these eigenvalues are positive if $y = \pm\phi_2(\mu)$ -- see Fig. \ref{fig:sys1_cubic}(a). Hence, the equilibria $(x=\sqrt{\mu},y=-\phi_1(\mu))$, $(x=\sqrt{\mu},y=-\phi_3(\mu))$, $(x=-\sqrt{\mu},y=\phi_1(\mu))$, and $(x=-\sqrt{\mu},y=-\phi_3(\mu))$ are stable nodes, while the equilibria $(x=\sqrt{\mu},y=\phi_2(\mu))$ and $(x=-\sqrt{\mu},y=-\phi_2(\mu))$ are saddles. 
    A similar equilibrium stability structure occurs at $\mu > \mu_2^*(\varepsilon)$. If $\mu_1^*(\varepsilon) < \mu < \mu_2^*(\varepsilon)$, $p_{\pm}(y)$ have unique roots $\mp\phi_1(\mu)$, respectively. The corresponding equilibria of system~\eqref{eq:system1}, $(x=\sqrt{\mu},y=-\phi_1(\mu))$ and $(x=-\sqrt{\mu},y=\phi_1(\mu))$, are stable nodes. 

    \end{itemize}

\item {\bf Case $\boldsymbol{ \varepsilon}\boldsymbol{>}\boldsymbol{\lambda}$.} Eq. \eqref{eq:sys1:critical} has no roots. The polynomials $p_{+}(y)$ and $p_{-}(y)$ have three roots at all $\mu > 0$, and the schematic in Fig. \ref{fig:sys1_cubic} (a) applies. A typical bifurcation diagram for this case is like the one for $\varepsilon = 1.2$ in Fig.~\ref{fig:sys1_phase_bifur}. 
If $\mu < 0$, $x = 0$ is the only equilibrium of the first cell, and the structure and stability of equilibria of system ~\eqref{eq:system1} are the same as in the previous case. If $\mu > 0$, the polynomials $p_{\pm}(y)$ have three roots at all $\mu > 0$. The structure and stability of the equilibria of system~\eqref{eq:system1} are the same as in the previous case with $0<\mu < \mu_1^*(\varepsilon)$ and $\mu > \mu_2^*(\varepsilon)$. 

\item {\bf Case $\boldsymbol{ \varepsilon}\boldsymbol{<}\boldsymbol{0}$.} If $\varepsilon < 0$, $p_{+}(y)$ and $p_{-}(y)$ are monotonously decreasing at $\mu\rightarrow0+$. The steady-state bifurcation from one to three equilibria occurs at the only root $\mu^*(\varepsilon)$ of Eq.~\eqref{eq:sys1:critical} at $\varepsilon < 0$.
A bifurcation diagram for this case is exemplified by the one for $\varepsilon = -0.8$ in Fig.~\ref{fig:sys1_phase_bifur}.  If $0<\mu<\mu^*(\varepsilon)$, the polynomials $p_{\pm}(y)$ have unique roots $\mp\phi_1(\mu)$, respectively. The structure and stability of the equilibria of system~\eqref{eq:system1} are the same as in the case $0 < \varepsilon < \lambda$ with $\mu_1^*(\varepsilon)<\mu < \mu_2^*(\varepsilon)$. If $\mu > \mu^*(\varepsilon)$, the polynomials $p_{\pm}(y)$ have three roots, whose structure and stability are the same as in the case $0 < \varepsilon < \lambda$ with $\mu > \mu_2^*(\varepsilon)$.

\end{itemize}

\section{Proof of Proposition \ref{prop1}}
\label{appB}

\begin{proof}
\begin{enumerate}
\item
   Eq. \eqref{eq:ellipse0} can be rewritten as 
    \begin{equation}
        \label{eq:witch}
        |v|^2 = \frac{1}{\tilde{\mu}^2(1-|v|^2)^2 + \tilde{\sigma}^2}.
    \end{equation}
    The right-hand side of Eq. \eqref{eq:witch} is the function of $|v|^2$ known as the ``Witch of Agnesi". This function is positive, symmetric with respect to $|v|^2 = 1$, and reaches its unique extremum, the maximum $|\sigma|^{-2} \ge 1$, at $|v|^2 = 1$. Since the Witch of Agnesi is strictly decreasing on $|v|^2 \ge 1$, and $f(|v|^2) = |v|^2$ is strictly increasing on $[1,\infty)$, there exists a unique solution on $[1,\infty)$ -- see Fig. \ref{fig:sys2roots}(a). Up to two additional solutions may exist on $(0,1)$ --  see Fig. \ref{fig:sys2roots}(b). Note that if $\tilde{\sigma} = 0$, the graph of the Witch of Agnesi acquires a vertical asymptote at $|v|^2 = 1$, but the same argument applies. 

\begin{figure}[h]
\begin{minipage}{.245\textwidth}
  \centering
  (a)
\begin{tikzpicture}
    \begin{axis}[width = \textwidth,
    xmin = 0,xmax  = 2,
    ymin = 0,ymax = 2,
    axis lines = middle,
    xlabel = {$|v|^2$},
    domain = 0:2,
    restrict x to domain=-0:2,
    legend style={draw=none,font = \tiny},
    every axis/.append style={font=\tiny}
    ]
     \draw [dashed] (1,0) -- (1,2); 
        \addplot[line width=1.5pt,color = blue,samples = 100]{1/(1.8*(x-1)^2 + 0.55)};
        \addplot[line width=1.5pt,color = red,samples = 100]{x};
    \end{axis}
\end{tikzpicture}
\end{minipage}%
\begin{minipage}{.245\textwidth}
 \centering
 (b)
\begin{tikzpicture}
    \begin{axis}[width = \textwidth,
    xmin = 0,xmax  = 2,
    ymin = 0,ymax = 2,
    axis lines = middle,
    xlabel = {$|v|^2$},
    domain = 0:2,
    restrict x to domain=-0:2,
    legend style={draw=none,font = \tiny},
    every axis/.append style={font=\tiny}
    ]
     \draw [dashed] (1,0) -- (1,2); 
        \addplot[line width=1.5pt,color = blue,samples = 100]{1/(8*(x-1)^2 + 0.55)};
        \addplot[line width=1.5pt,color = red,samples = 100]{x};
    \end{axis}
\end{tikzpicture}
\end{minipage}%
\begin{minipage}{.245\textwidth}
 \centering
 (c)
\begin{tikzpicture}
    \begin{axis}[width = \textwidth,
    xmin = 0,xmax  = 1,
    ymin = 0,ymax = 1,
    axis lines = middle,
    xlabel = {$|v|^2$},
    domain = 0:1,
    restrict x to domain=0:1,
    legend style={draw=none,font = \tiny},
    every axis/.append style={font=\tiny}
    ]
     \draw [dashed] (1,0) -- (1,1); 
        \addplot[line width=1.5pt,color = blue,samples = 100]{1/(2*(x-1)^2 + 1.2)};
        \addplot[line width=1.5pt,color = red,samples = 100]{x};
    \end{axis}
\end{tikzpicture}
\end{minipage}%
\begin{minipage}{.245\textwidth}
 \centering
 (d)
\begin{tikzpicture}
    \begin{axis}[width = \textwidth,
    xmin = 0,xmax  = 1,
    ymin = 0,ymax = 1,
    axis lines = middle,
    xlabel = {$|v|^2$},
    domain = 0:1,
    restrict x to domain=0:1,
    legend style={draw=none,font = \tiny},
    every axis/.append style={font=\tiny}
    ]
     \draw [dashed] (1,0) -- (1,1); 
        \addplot[line width=1.5pt,color = blue,samples = 100]{1/(5*(x-1)^2 + 1.05)};
        \addplot[line width=1.5pt,color = red,samples = 100]{x};
    \end{axis}
\end{tikzpicture}
\end{minipage}%
\caption{The structure of roots of Eq. \eqref{eq:witch} depending n $\tilde{\mu}$ and $\tilde{\sigma}$. If $|\tilde{\sigma}| \le 1$, then Eq. \eqref{eq:witch} has a unique root on $[1,\infty)$. If $\tilde{\mu}$ is small enough, this is the only root of Eq. \eqref{eq:witch} as in (a). If $\tilde{\mu}$ is large enough, Eq. \eqref{eq:witch} has two additional roots on $(0,1)$ as in (b). If $|\sigma| > 1$, then Eq. \eqref{eq:witch} has at least one and up to three roots on $(0,1)$, as in (c) and (d), respectively.} 
\label{fig:sys2roots}
\end{figure}
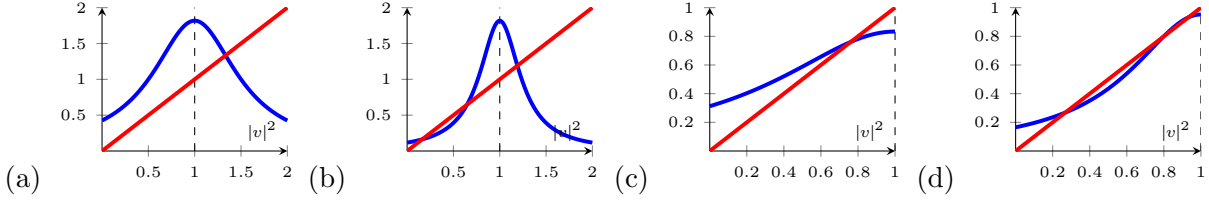

    Once $|v|^2$ is found, $v_R$ and $v_I$ are uniquely determined from Eq. \eqref{eq:vrvi_equil}:
    \begin{equation}
        \label{eq:vrvisol}
        v_R = \frac{\tilde{\mu}(1-|v|^2)}{\tilde{\mu}^2(|v|^2 - 1)^2 + \tilde{\sigma}^2},\quad v_I = -\frac{\tilde{\sigma}}{\tilde{\mu}^2(1-|v|^2)^2 + \tilde{\sigma}^2} 
    \end{equation}
   Note that if $|v|^2 = 1$, then $|\sigma| = 1$,  $v_R = 0$, and $v_I = -{\rm sgn}(\tilde{\sigma})$. 

   To assess the stability of the solution with $|v|^2\ge 1$, we write out the Jacobian of Eq. \eqref{eq:vrvi}:
    \begin{equation}
        J(v_R,v_I) = \left[\begin{array}
        {cc}\tilde{\mu}(1-|v|^2) - 2\tilde{\mu}v_R^2 & -\tilde{\sigma} - 2\tilde{\mu}v_Iv_R \\
        \tilde{\sigma} - 2\tilde{\mu}v_Rv_I & \tilde{\mu}(1-|v|^2) - 2\tilde{\mu}v_I^2\end{array}\right].
    \end{equation}
The conditions for the asymptotic stability are
\begin{align}
    \det J& = \tilde{\mu}^2 (1 - |v|^2)(1 - 3|v|^2) + \tilde{\sigma}^2 > 0,\label{eq:sys2_det}\\
    {\rm tr}\thinspace J & = 2\tilde{\mu}(1 - 2|v|^2) <0. \label{eq:sys2_trace}
\end{align}
We note that $|v|^2 = 1$ implies $|\sigma| = 1$. Then $\det J > 0$ and ${\rm tr}\thinspace J < 0$ if $|v|^2 \ge 1$.  This completes the proof of statement (1). 

\item Statement (2) follows from the expansion of the left-hand side of Eq. \eqref{eq:ellipse0} at $\tilde{\sigma} = 0$:
\begin{equation}
    \label{eq:v2asymp}
    |v|^6 - 2|v|^4 + |v|^2 = \frac{1}{\tilde{\mu}^2}.
\end{equation}
Letting $\tilde{\mu}\rightarrow 0$ and observing that
the term $\tilde{\mu}^2|v|^6 $ dominates the left-hand side of Eq. \eqref{eq:v2asymp}, we obtain that $|v|\approx \tilde{\mu}^{-1/3}$.

\item
Given $|v|^2\in(0,+\infty)$, $\tilde{\sigma}\in\mathbb{R}$, and $\tilde{\mu}> 0$  uniquely define an equilibrium $(v_R,v_I)$ of ODE \eqref{eq:vrvi} by Eq. \eqref{eq:vrvisol}, provided that $(\tilde{\sigma},\tilde{\mu})$ lie on the ellipse \eqref{eq:ellipse0}. This equilibrium is asymptotically stable if and only if inequalities \eqref{eq:sys2_det}--\eqref{eq:sys2_trace} hold. Hence, we are seeking the region in the $(\tilde{\sigma},\tilde{\mu})$-space where the system 
\begin{align}
    |v|^2\tilde{\sigma}^2 +|v|^2(1-|v|^2)^2 \tilde{\mu}^2&  = 1,\label{eq:c1}\\
       \tilde{\mu}^2 (1 - |v|^2)(1 - 3|v|^2) + \tilde{\sigma}^2 &> 0,\label{eq:c2}\\
  2\tilde{\mu}(1 - 2|v|^2)& <0 ,\label{eq:c3}
\end{align}
is compatible. 
By Statement (1), the sought region includes the strip $\tilde{\sigma}^2\le 1$, corresponding to the range $|v|^2\in[1,\infty)$. Hence, it remains to examine the region $|v|^2 \in (0,1)$. Eq. \eqref{eq:c3} implies that $|v|^2 > \tfrac{1}{2}$, which means that the region spanned by the family of ellipses \eqref{eq:c1} with $|v|^2\in(\tfrac{1}{2},1)$ includes the part of the sought region lying beyond the strip $\tilde{\sigma}^2\le 1$.  Below, we will find the part of this region with $\tilde{\sigma} > 1$. Its other part with $\tilde{\sigma} < -1$ is obtained by a mirror reflection with respect to the axis $\tilde{\sigma} = 0$.  

At each fixed $\tilde{\mu}^2>0$, we seek the largest $\tilde{\sigma}^2$ satisfying Eq. \eqref{eq:c1}:
\begin{equation*}
    \tilde{\sigma}^2 = \frac{1}{|v|^2} - (1-|v|^2)^2\tilde{\mu}^2 ~\rightarrow~\max,\quad \frac{1}{2} < |v|^2 < 1.
\end{equation*}
Denoting $|v|^2$ by $x$, taking the derivative $\tfrac{d(\tilde{\sigma}^2)}{dx}$ and setting it to zero, we find that the interior point extrema of $\tilde{\sigma}^2$ exist if and only if the equation
\begin{equation}
    \label{eq:mucond}
    \tilde{\mu}^2 = \frac{1}{2x^2(1-x)}
\end{equation}
has roots, which is the case if $\tilde{\mu}^2 \ge \tfrac{27}{8}$. If $\tilde{\mu}^2 > \tfrac{27}{8}$, Eq. \eqref{eq:mucond} has two positive roots, one greater than $\tfrac{2}{3}$, that corresponds to a local maximum of $\tilde{\sigma}^2$, and one less than $\tfrac{2}{3}$, that corresponds to a local minimum of  $\tilde{\sigma}^2$. Thus, if $\tilde{\mu}<\tfrac{3\sqrt{3}}{2\sqrt{2}}$, the maximum of $\tilde{\sigma}^2$ is achieved at the end of the interval $|v|^2 = \tfrac{1}{2}$. If $\tilde{\mu}<\tfrac{3\sqrt{3}}{2\sqrt{2}}$, the maximum of $\tilde{\sigma}^2$ may be achieved at the larger root of \eqref{eq:mucond} or $x = \tfrac{1}{2}$. The value of $\tilde{\sigma}^2$ at $x = \tfrac{1}{2}$ is larger than that at the larger root of  Eq. \eqref{eq:mucond} for $\tilde{\mu}\in(\tfrac{3\sqrt{3}}{2\sqrt{2}},\tfrac{4\sqrt{2}}{3})$, and the other way around for $\tilde{\mu}>\tfrac{4\sqrt{2}}{3}$ -- see Fig. \ref{fig:system2_phase_zoomin}. Noting that $\tilde{\mu}>\tfrac{4\sqrt{2}}{3}$ corresponds to $x \equiv |v|^2 = \tfrac{3}{4}$, we conclude that the region spanned by ellipses \eqref{eq:c1} is bounded by the union of 
\begin{itemize}
    \item the parametric curve in Eq. \eqref{eq:sys2_det}, where the expression for $\tilde{\mu}$ comes from Eq. \eqref{eq:mucond} and the expression for $\tilde{\sigma}$ is obtained by plugging in Eq. \eqref{eq:mucond} into Eq. \eqref{eq:c1}, and
    \item the ellipse $|v|^2 = \tfrac{1}{2}$.
\end{itemize}
Now it remains to check which part of the region spanned by ellipses \eqref{eq:c1} with $|v|^2\ge\tfrac{1}{2}$ satisfies stability condition \eqref{eq:c2}. Since we are considering the case $|v|^2\in(\tfrac{1}{2},1)$, $1 - |v|^2 > 0$ and $1-3|v|^2 < 0$. Hence, we rewrite Eq. \eqref{eq:c2} as
\begin{equation}
    \label{eq:c2eq}
    \tilde{\sigma}^2 >  \tilde{\mu}^2 (1 - |v|^2)(3|v|^2 - 1).
\end{equation}
Plugging Eq. \eqref{eq:c2eq} into Eq. \eqref{eq:c1} and dividing the result by $\tilde{\mu}^2$, we obtain 
\begin{equation}
    \label{eq:mucond1}
    2|v|^4(1-|v|^2) < \frac{1}{\tilde{\mu}^2},\quad |v|^2\in (\tfrac{1}{2},1),
\end{equation}
cf. Eq. \eqref{eq:mucond}. Hence, we have the following situation:
\begin{itemize}
    \item If $\tilde{\mu}< \frac{3\sqrt{3}}{2\sqrt{2}}$, Eq. \eqref{eq:mucond1} holds for all $|v|^2\in(\tfrac{1}{2},1)$. Hence, the intersection of the region inside the  ellipse \eqref{eq:c1} with $|v|^2 = \tfrac{1}{2}$ and the strips $0<\tilde{\mu}< \frac{3\sqrt{3}}{2\sqrt{2}}$, $\tilde{\sigma}^2 > 1$,  has an asymptotically stable periodic solution.
    \item If $\tilde{\mu} >  \frac{3\sqrt{3}}{2\sqrt{2}}$, inequality \eqref{eq:mucond1} holds for $\tfrac{1}{2}< |v|^2 < x_1$ and $x_2< |v|^2 < 1$, where $x_1 < x_2$ are the two positive roots of the cubic equation $2x^2(1-x) = \tilde{\mu}^{-2}$. The analysis of this cubic equation and Fig. \ref{fig:sys2mucubic} show that $x_1\in(0,\tfrac{2}{3})$ and $x_2 \in(\tfrac{2}{3},1)$.    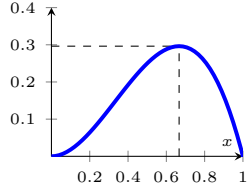
\begin{figure}
    \centering
\begin{tikzpicture}
    \begin{axis}[width = 0.25\textwidth,
    xmin = 0,xmax  = 1,
    ymin = 0,ymax = 0.4,
    axis lines = middle,
    xlabel = {$x$},
    domain = 0:2,
    restrict x to domain=-0:2,
    legend style={draw=none,font = \tiny},
    every axis/.append style={font=\tiny}
    ]
     \draw [dashed] (2/3,0) -- (2/3,8/27);
     \draw [dashed] (0,8/27) -- (2/3,8/27);
        \addplot[line width=1.5pt,color = blue,samples = 100]{2*x^2*(1-x)};
    \end{axis}
\end{tikzpicture}
\caption{The graph of $y = 2x^2(1-x)$, where $x \equiv |v|^2$.} 
\label{fig:sys2mucubic}
\end{figure}

 The interval $\tfrac{1}{2}< |v|^2 < x_1$ defines the region between the ellipse Eq. \eqref{eq:c1} with $|v|^2 = \tfrac{1}{2}$, i.e., $\tfrac{\tilde{\mu}^2}{8} + \tfrac{\tilde{\sigma}^2}{2} = 1$, and the contour
     \begin{equation}
        \label{eq:dash-dot1}
        |v|^2 \in \left(\frac{1}{2},\frac{2}{3}\right),\quad \tilde{\sigma} = \sqrt{\frac{3|v|^2 - 1}{2|v|^4}},\quad \tilde{\mu} = \frac{1}{|v|^2\sqrt{2(1-|v|^2)}}.
    \end{equation}
 Note that $|v|^2 = \tfrac{1}{2}$ defines the point $(\tilde{\sigma} = 1,\tilde{\mu} = 2)$.   
 The interval $x_2< |v|^2 < 1$ defines the region between the line $|v|^2 = 1$ and the contour
     \begin{equation}
        \label{eq:dash-dot2}
        |v|^2 \in \left(\frac{2}{3},1\right),\quad \tilde{\sigma} = \sqrt{\frac{3|v|^2 - 1}{2|v|^4}},\quad \tilde{\mu} = \frac{1}{|v|^2\sqrt{2(1-|v|^2)}}.
    \end{equation}
The union of these two regions is bounded by the curves in Eqs. \eqref{eq:dash-dot1} and \eqref{eq:dash-dot2} and the line $\tilde{\sigma}^2 = 1$. Finally, as we have shown above, the ellipse \eqref{eq:c1} with $|v|^2 = \tfrac{1}{2}$, $\tfrac{\tilde{\mu}^2}{8} + \tfrac{\tilde{\sigma}^2}{2} = 1$, intersects with the curve \eqref{eq:dash-dot2} at $|v|^2 = \tfrac{3}{4}$, $\tilde{\sigma} = \tfrac{\sqrt{10}}{3}$, $\tilde{\mu} = \tfrac{4\sqrt{2}}{3}$.

Thus, an asymptotically stable equilibrium of ODE \eqref{eq:vrvi} exists in the union of these two regions and the region with $\tilde{\mu}\le \tfrac{3\sqrt{3}}{2\sqrt{2}}$, i.e., in the area shaded yellow, pink, and green in Fig. \ref{fig:system2_phase_zoomin}.  This completes the proof of Statement (3). 

\item
To identify the region where three periodic solutions exist, we return to the consideration of the solutions to Eq. \eqref{eq:witch}. We focus on the case $\tilde{\sigma} \ge 0$, as the sought region is symmetric with respect to the $\tilde{\mu}$-axis.  For convenience, we denote $|v|^2$ by $x$. We seek a set of critical values of $(\tilde{\sigma},\tilde{\mu})$, where this equation has exactly two solutions. In this case, the graph of $y = x$ must be tangent to the graph of $y = [\tilde{\mu}^2(1-x)^2 + \tilde{\sigma}^2]^{-1}$ at one of these solutions. Hence, we must solve the system
\begin{equation}
\label{eq:mucrit}
    \begin{aligned}
        x &= \frac{1}{\tilde{\mu}^2(1-x)^2 + \tilde{\sigma}^2},\\
        1& = \frac{2\tilde{\mu}^2(1-x)}{(\tilde{\mu}^2(1-x)^2 + \tilde{\sigma}^2)^2}.
    \end{aligned}
\end{equation}
Substituting the first equation into the second one, we obtain the familiar equation $2\tilde{\mu}^2x^2(1-x) = 1$. It has a solution if $\tilde{\mu}\ge \tfrac{3\sqrt{3}}{2\sqrt{2}}$. 
 Substituting  $\tilde{\mu}^2 = [2x^2(1-x)]^{-1}]$ to the first equation in Eq. \eqref{eq:mucrit}, we find
 \begin{equation}
     \label{eq:sigmacrit}
     \tilde{\sigma}^2 = \frac{3x-1}{2x^2}.
 \end{equation}
Since $\tilde{\sigma}\ge 0$,  we find $x\ge \tfrac{1}{3}$.
The graph of $y =x$ is below the graph of the Witch of Agnesi between two additional solutions to Eq. \eqref{eq:mucrit} in the interval $x\in(0,1)$. This implies that the region in the $(\tilde{\sigma},\tilde{\mu})$-space where three solutions exist lies between the curve \eqref{eq:dash-dot2} and the curve
    \begin{equation}
        \label{eq:dash-dot3}
        |v|^2 \in \left(\frac{1}{3},\frac{2}{3}\right),\quad \tilde{\sigma} = \sqrt{\frac{3|v|^2 - 1}{2|v|^4}},\quad \tilde{\mu} = \frac{1}{|v|^2\sqrt{2(1-|v|^2)}}.
    \end{equation}

Now it remains to check the stability of these three solutions. The proof of Statement (3) reveals that asymptotically stable equilibria lie on ellipses of the form \eqref{eq:c1} with 
\begin{enumerate}
    \item $|v|^2 \ge 1$, or
    \item $\tfrac{1}{2} <|v|^2 \le \tfrac{2}{3}$ and with $\tilde{\mu}$ increasing until they touch the $\det J = 0$ curve in Eq. \eqref{eq:dash-dot3} from above, or
    \item $\tfrac{2}{3} <|v|^2 < 1$ and  with $\tilde{\mu}$ increasing until they touch the curve $\det J = 0$ in Eq. \eqref{eq:dash-dot2} from below.
\end{enumerate}
Ellipse family (a)  does not intersect with families (b) and (c) and foliates the strip $|\tilde{\sigma}| \le 1$. Ellipse family (b) foliates the region under the ellipse  $\tfrac{\tilde{\mu}^2}{8} + \tfrac{\tilde{\sigma}^2}{2} = 1$ with $|v|^2 = \tfrac{1}{2}$ and above curve \eqref{eq:dash-dot3}. Ellipse family (c) foliates the region between the the ellipse with $|v|^2=\tfrac{2}{3}$, the line $\tilde{\sigma} = 1$, and curve \eqref{eq:dash-dot2}. Families (b) and (c) overlap over the region shaded dirty pink in Fig. \ref{fig:system2_phase_zoomin}. The tip of the bistability region corresponds to $|v|^2 = \tfrac{2}{3}$, $\tilde{\sigma} = \tfrac{3}{2\sqrt{2}}$, $\tilde{\mu} = \tfrac{3\sqrt{3}}{2\sqrt{2}}$. Curves \eqref{eq:dash-dot3} and \eqref{eq:dash-dot2} envelope families (b) and (c).
This completes the proof of Statement (4).

\end{itemize}


\end{enumerate}
\end{proof}

\section{Calculations for Section \ref{sec:system4_singularity}}
\label{appC}
Eq. \eqref{eq:G_xx} yields
\begin{equation}
    \label{eq:sys4_x_hyst}
    x = \frac{2(\mu + \varepsilon + \gamma\sigma)}{3(1+\gamma^2)}.
\end{equation}
Plugging Eq. \eqref{eq:sys4_x_hyst} into Eq. \eqref{eq:G_x} gives:
\begin{equation}
\label{eq:sys4_A_hyst}
    \frac{4}{3}\frac{(\mu + \varepsilon + \gamma\sigma)^2}{1+\gamma^2} = (\mu + \varepsilon)^2 + \sigma^2.
    \end{equation}
Plugging Eq. \eqref{eq:sys4_x_hyst} into Eq. \eqref{eq:G_per} and using Eq. \eqref{eq:sys4_A_hyst} results in
\begin{equation}
    \label{eq:sys4_hyst2}
    \frac{8}{27}\frac{(\mu + \varepsilon + \gamma\sigma)^3}{(1+\gamma^2)^2} = \lambda^2\mu.
\end{equation}
\medskip

System \eqref{eq:sys4_x_hyst}, \eqref{eq:sys4_A_hyst}, \eqref{eq:sys4_hyst2} can be solved for $\varepsilon$, $\sigma$, and $|u|^2\equiv x$ in terms of $\lambda$, $\mu$, and $\gamma$ as follows.
We express $ \mu + \varepsilon + \gamma\sigma$ from Eq. \eqref{eq:sys4_hyst2} as
\begin{equation}
\label{eq:sys4_aux1}
   \mu + \varepsilon + \gamma\sigma = \frac{3}{2}\left(\lambda^2 \mu (1+\gamma^2)^2\right)^{1/3} =: C(\lambda).
\end{equation}
Then we denote $\mu +\varepsilon$ by $X$, express $\gamma\sigma$ from Eq. \eqref{eq:sys4_aux1} as $\gamma\sigma = C(\lambda) - X$, multiply Eq. \eqref{eq:sys4_A_hyst} by $\gamma^2$, and obtain a quadratic equation for $X$:
\begin{equation}
    \label{eq:sys4_aux2}
    X^2(1 + \gamma^2) - 2XC(\lambda) + C^2(\lambda)\left(1 - \frac{4}{3}\frac{\gamma^2}{1+\gamma^2}\right) = 0.
\end{equation}
Its solution is
\begin{equation}
    \label{eq:sys4_aux3}
    \mu + \varepsilon\equiv X = C(\lambda)\frac{1\pm\tfrac{1}{\sqrt{3}}\gamma} {1+\gamma^2}\equiv  \frac{3}{2}\frac{1\pm\tfrac{1}{\sqrt{3}}\gamma}{(1+\gamma^2)^{1/3}}\lambda^{2/3}\mu^{1/3}.
\end{equation}
Then, we get
\begin{equation}
    \label{eq:sys4_H}
    \varepsilon = \frac{3}{2}\frac{1\pm\tfrac{1}{\sqrt{3}}\gamma}{(1+\gamma^2)^{1/3}}\lambda^{2/3}\mu^{1/3} - \mu,\quad \sigma = \frac{3}{2}\frac{\gamma\mp\tfrac{1}{\sqrt{3}}}{(1+\gamma^2)^{1/3}}\lambda^{2/3}\mu^{1/3},\quad
    |u|^2 = \lambda^{2/3}\mu^{1/3}(1+\gamma^2)^{-1/3}.
\end{equation}

\section{Additional equations for system \eqref{eq:system4}}
\label{appE}
The ODE system for the real and complex parts of the new variable $u = u_R + iu_I$ defined as $z_2 = ue^{i\omega t}$, where $z_2$ is the state variable of the second state of system \eqref{eq:system4} is
\begin{equation}
    \label{eq:gamma_urui}
    \begin{aligned}
        u_R& = \left(\mu + \varepsilon -|u|^2\right)u_R -\left({\sigma}-\gamma|u|^2\right)u_I - \lambda\sqrt{\mu},\\
        u_I& = \left(\mu + \varepsilon -|u|^2\right)u_I +\left(\sigma -\gamma|u|^2\right)u_R.
    \end{aligned}
\end{equation}
The Jacobian matrix of Eq. \eqref{eq:gamma_urui} is
\begin{equation}
    \label{eq:gamma_uJ}
    J = \left[\begin{array}{ccc}
    (\mu + \varepsilon - |u|^2) -2u_R^2 + 2\gamma u_Ru_I&\quad&
    -2u_Ru_I-(\sigma - \gamma|u|^2) + 2\gamma u_I^2\\
    (\sigma - \gamma|u|^2)-2\gamma u_R^2 - 2u_Ru_I&\quad&
     (\mu + \varepsilon - |u|^2)  - 2u_I^2 - 2\gamma u_Iu_R
    \end{array}\right].
\end{equation}
Its determinant and trace are:
\begin{align}
    \det J &= \left(\mu + \varepsilon - |u|^2\right) \left(\mu + \varepsilon - 3|u|^2\right) +\left(\sigma - \gamma|u|^2\right)\left(\sigma - 3\gamma|u|^2\right),\\
    {\rm tr}\thinspace J& = 2\left(\mu + \varepsilon - 2|u|^2\right).
\end{align}
An equilibrium of ODE \eqref{eq:gamma_urui} is asymptotically stable if $\det J > 0$ and ${\rm tr}\thinspace J < 0$.

\bibliographystyle{alpha}

\end{document}